\newtheorem{assumption}{Assumptions}[section]
\newtheorem{thm}{Theorem}[section]
\newtheorem*{thm*}{Theorem}
\newtheorem{lem}[thm]{Lemma}
\newtheorem{cor}[thm]{Corollary}
\newtheorem{prop}[thm]{Proposition}
\theoremstyle{definition}
\newtheorem{defn}[thm]{Definition}
\newtheorem{rem}{Remark}[section]
\newtheorem*{rem*}{Remark}
\numberwithin{equation}{section}
\begin{document}

\title[Semilinear equation fractional Laplacian]{An inverse problem for semilinear equations involving the fractional Laplacian}

\author{Pu-Zhao Kow}
\address{Department of Mathematics and Statistics, University of Jyv\"{a}skyl\"{a},
Jyv\"{a}skyl\"{a}, Finland. }
\email{\href{mailto:pu-zhao.pz.kow@jyu.fi}{pu-zhao.pz.kow@jyu.fi}}

\author{Shiqi Ma}
\address{School of Mathematics, Jilin University, Changchun, China}
\email{\href{mailto:mashiqi@jlu.edu.cn}{mashiqi@jlu.edu.cn}}

\author{Suman Kumar Sahoo}
\address{Department of Mathematics and Statistics, University of Jyv\"{a}skyl\"{a},
Jyv\"{a}skyl\"{a}, Finland. }
\email{\href{mailto:suman.k.sahoo@jyu.fi}{suman.k.sahoo@jyu.fi}}

\subjclass[2020]{35R11, 35R30, 46T20}

\keywords{fractional Laplacian, fractional Calder\'{o}n problem, nonlocal semilinear
equations, fractional diffusion equation, fractional wave equation,
Runge approximation.}

\begin{abstract}
Our work concerns the study of inverse problems of heat and wave equations involving the fractional Laplacian operator with zeroth order nonlinear perturbations. We recover nonlinear terms in the semilinear equations from the knowledge of the fractional  Dirichlet-to-Neumann type map combined with the Runge approximation and the unique continuation property of the fractional Laplacian.
\end{abstract}

\maketitle

\section{Introduction and main results}

We investigate inverse problems for heat and wave equations involving the fractional Laplacian operator with zeroth order nonlinear perturbations.
The study of inverse problems involving the fractional Laplace began with the work \cite{GSU20Calderon} by Ghosh, Salo and Uhlmann.
In \cite{GSU20Calderon}, they proposed and proved a Calder\'on type inverse problem for a linear fractional Laplace operator.
The Calder\'on problem was initiated by Calder\'on in his work \cite{Cal_80} for non-fractional Laplace equations. There is ample amount of literature available on the non-fractional Calder\'on problem and we refer the readers to the survey \cite{Uhl_survey}.
The key tool for studying fractional type of inverse problems is the Runge approximation property, which is a consequence of the fractional unique continuation property (fUCP), i.e.~if $u= (-\Delta)^s u =0$ in certain open set, then $ u=0$ everywhere.
Utilizing these tools, inverse problems involving fractional operators have been greatly investigated by numerous authors in recent years. We refer readers to \cite{GSU_jfa,LL22GlobalUniqueness,LO22GlobalUniquenessSemilinear,Li21GlobalUniquenessSemilinear,Lin20monotonicity,LL22GlobalUniquenessSemilinear1} for some recent works involving inverse problems for fractional semilinear elliptic equations.

Compared to the study of inverse problems involving fractional order operators, the study of inverse problems involving nonlinear terms goes back to Isakov \cite{isakov_nonlinear} and has been under extensive study in the literature.
In \cite{isakov_nonlinear} he studied the nonlinear inverse problems for elliptic and parabolic equations using first order linearization techniques.  In \cite{Mikko_nonlinear_elliptic} the authors successfully implemented {\it higher order linearization techniques} to solve inverse problems for elliptic equations involving power type nonlinearity. In the higher order linearization, the idea is to use product of the solutions of ``\emph{free equation}'' $\Delta u=0$ ($i.e.$ only principal operator, no lower order term is attached). It was observed that using non-linearity as a tool one can solve certain inverse problems which are not available for linear case.
The method was also used to solve several nonlinear inverse problems including partial data \cite{Kru_Uhl_partial,Krupchyk_nonlinear_3,harrach_lin_nonlinear} and Riemannian manifolds \cite{FO_jde,Ali_tony_yisun,Tony_lin_Salo_teemu}. Inverse problems related to more general nonlinearities we refer  \cite{Krupchyk_nonlinear_2,Claudio_partial} and the  references cited there.

The study of inverse problems related to semilinear wave equations with quadratic non-linearity started with the fundamental work \cite{KLU_invention} by Kurylev, Lassas and Uhlmann. In \cite{KLU_invention} the authors used propagation of  non-linear interaction of non smooth plane waves having conormal singularities.  Then in \cite{Ali_lauri}  authors used wave packet (sometimes it is also called quasimode construction) construction to solve certain non-linear hyperbolic inverse problems. This helps to avoid the need to use microlocal analysis techniques. For a comparison between these two methods mentioned above we refer \cite{hintz_uhlmann_zhai}.
Inverse problems for nonlinear parabolic equations have been well studied.
We refer \cite{non_linear_schrodinger,Ali_Yavar_Uhlmann} and the references therein for more results.   

Motivated by the works mentioned above, in this article we consider an inverse problem for nonlinear fractional parabolic equations.
Fractional parabolic equations have applications in random processes \cite{Barlow2009nonlocal}.
We study the fractional type heat equations as well as the fractional type wave equations, and we start with the heat equation first.

Let $n\ge 1$ be a non-negative integer and $0<s<1$. Let $\Omega$ be a bounded Lipschitz
domain in $\mathbb{R}^{n}$ and $\Omega^{e}:=\mathbb{R}^{n}\setminus\overline{\Omega}$.
Let $W$ be any bounded Lipschitz domain in $\Omega^{e}$. Let $u=u(t,x)$
satisfy the following fractional diffusion equation with nonlinear
term $q=q(t,x,z)$: 
\begin{equation}
\begin{cases}
\partial_{t}u(t,x)+(-\Delta)^{s}u(t,x)+q(t,x,u(t,x))=0 & \text{in}\;\;\Omega_{T}\equiv(0,T)\times\Omega,\\
u(t,x)=f(t,x) & \text{in}\;\;\Omega_{T}^{e}\equiv(0,T)\times\Omega^{e},\\
u(0,x)=0 & \forall x \in \Omega,
\end{cases}\label{eq:nonlinear-diffusion-main}
\end{equation}
for certain appropriate exterior data $f=f(t,x)\in\mathcal{C}_{c}^{\infty}(W_{T})$, where $W_T := (0,T) \times W$ and $\mathcal{C}_{c}^{\infty}(\cdot)$ denotes the space of smooth compactly supported functions on their domain of definition.
Here, the fractional Laplacian $(-\Delta)^{s}$ is defined via the
Fourier transform: 
$
\mathscr{F}((-\Delta)^{s}v)(\xi):=|\xi|^{2s}\hat{v}(\xi)$ for all $\xi \in \mathbb{R}^{n},
$
where $\hat{v}=\mathscr{F}v$ is the Fourier transform of distribution
$v$. Given any open sets $V$ and $W$ in $\Omega^{e}$, we define
the DN-map corresponding to \eqref{eq:nonlinear-diffusion-main}
as follows: 
\begin{equation}
\Lambda^{\rm heat}_{q}(f):=(-\Delta)^{s}u \big|_{V_{T}}\quad\text{for all \textquotedblleft sufficiently small\textquotedblright\ }f\in\mathcal{C}_{c}^{\infty}(W_{T}),\label{eq:DN-map}
\end{equation}
where $u$ is the unique solution of \eqref{eq:nonlinear-diffusion-main},
see Proposition~\ref{prop:well-posedness-diffusion-nonlinear}. We now state the assumptions on the coefficient  under which we state and prove our main results.

\begin{assumption}\label{assumption_on_q}
Let $ C^k(\cdot)$ be the space of $k$-times continuously differentiable functions for all integers $k\ge 0$. Assume that the function $q(t,x,z)$ satisfies following conditions.
	\begin{enumerate}
		\renewcommand{\labelenumi}{\theenumi}
		\renewcommand{\theenumi}{(Q.\arabic{enumi})}
	\item \label{itm:q1} For each $(t,x)\in(0,T)\times\Omega$, the mapping $z\mapsto q(t,x,z)$
	is in $\mathcal{C}^{m+1}((-\delta,\delta))$. 
	\item \label{itm:q2} $q(t,x,0)=0$ for all $(t,x)\in\Omega_{T}$. 
	\item \label{itm:q3} There exists a non-decreasing function $\Phi:(-\delta,\delta)\rightarrow\mathbb{R}_{+}$
	such that 
	\[
	\sup_{(t,x)\in\Omega_{T},|z|\le\epsilon}|\partial_{z}q(t,x,z)|\le\Phi(\epsilon)
	\]
	for all $0<\epsilon<\delta$ and $\lim_{\epsilon\rightarrow0}\Phi(\epsilon)=0$. 
	\item \label{itm:q4} Given any $k=2,3,\cdots,m+1$, there exists $M_{k}$ (depending
	on $k$) such that 
	\begin{equation}
	\sup_{(t,x)\in\Omega_{T},|z|\le\delta}|\partial_{z}^{k}q(t,x,z)|\le M_{k}.\label{eq:bound-Mk}
	\end{equation}
	\end{enumerate}
\end{assumption}
With these assumptions on the coefficient, the following is our first main result:
\begin{thm} [Global uniqueness from DN-map] \label{thm:main-diffusion}
	Choose any $n\in\mathbb{N}$ and $0<s<1$.
	Let $\Omega\subset\mathbb{R}^{n}$ be a bounded Lipschitz domain.
	Let $W,V\subset\Omega^{e}$ be any open sets,
	both with Lipschitz boundary, satisfying $\overline{V}\cap\overline{\Omega}=\emptyset$
	and $\overline{W}\cap\overline{\Omega}=\emptyset$. Fix an integer
	$m\ge2$ and a positive number $\delta>0$. Assume that each $q_{j}$ ($j=1,2$) satisfies \ref{itm:q1}-\ref{itm:q4}. 
Then there exists a constant $\tilde{\epsilon}_{0}=\tilde{\epsilon}_{0}(n,s,\Omega,T,\delta)$
	such that, if 
	\[
	\Lambda^{\rm heat}_{q_{1}}(f) = \Lambda^{\rm heat}_{q_{2}}(f) \quad \mbox{for all}\,\, f\in\mathcal{C}_{c}^{\infty}(W_{T})\,\, \mbox{satisfying}\,\, \|f\|_{{\rm ext}}
	\leq \tilde{\epsilon}_{0},
	\]
	where the norm $\|\cdot\|_{{\rm ext}}$ is defined in \eqref{eq:small-f} below,
	then we have 
	\begin{equation} \label{eq:jet-recover}
		\partial_{z}^{k}q_{1}(t,x,0)
		= \partial_{z}^{k}q_{2}(t,x,0) \quad
		\forall (t,x) \in \Omega_{T}, \quad
		k=0,1,2,\cdots,m.
	\end{equation}
	Additionally, if we assume $z \longmapsto  q(t,x,z)$ is analytic for $(t,x)\in \Omega_T$, then we have
	\begin{equation*} \label{eq:recover-potential}
	q_{1}(t,x,z)=q_{2}(t,x,z)\quad \forall (t,x)\in\Omega_{T}, \ \forall z \in I.
	\end{equation*}
\end{thm}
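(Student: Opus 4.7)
The plan is to use the higher-order linearization method combined with Runge approximation for the forward and backward (adjoint) free fractional heat equations. Fix exterior data $f_1,\ldots,f_m \in \mathcal{C}_c^\infty(W_T)$ and small parameters $\epsilon = (\epsilon_1,\ldots,\epsilon_m)$, and let $u^{(j)}(\cdot;\epsilon)$ denote the unique solution of \eqref{eq:nonlinear-diffusion-main} with $q = q_j$ and exterior data $f_\epsilon := \sum_{i=1}^m \epsilon_i f_i$, provided by Proposition~\ref{prop:well-posedness-diffusion-nonlinear}. Smallness together with well-posedness yield that $\epsilon \mapsto u^{(j)}$ is $\mathcal{C}^{m+1}$ near the origin with $u^{(j)}(\cdot;0)\equiv 0$ (using \ref{itm:q2}). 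For each $\ell$, let $V_\ell$ denote the unique solution of the \emph{free} linear equation $\partial_t V + (-\Delta)^s V = 0$ in $\Omega_T$ with exterior data $f_\ell$ and zero initial condition; since \ref{itm:q3} forces $\partial_z q_j(\cdot,0) \equiv 0$, the first linearization $\partial_{\epsilon_\ell}u^{(j)}|_{\epsilon=0}$ coincides with $V_\ell$ and is independent of $j$.

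Proceed by induction on $k$ from $2$ up to $m$. Suppose $\partial_z^\ell q_1(\cdot,0) = \partial_z^\ell q_2(\cdot,0)$ for all $2 \le \ell < k$. A Fa\`a di Bruno expansion of the $k$-fold derivative of $q_j(t,x,u^{(j)}(\cdot;\epsilon))$ at $\epsilon=0$ shows that the equation for the $\ell$-th linearization ($\ell<k$) involves only $\partial_z^r q_j(\cdot,0)$ with $r\leq\ell<k$, so by the induction hypothesis these equations coincide for $j=1,2$, and well-posedness yields $\partial_{\epsilon_{i_1}}\cdots\partial_{\epsilon_{i_\ell}}u^{(1)}|_{\epsilon=0}= \partial_{\epsilon_{i_1}}\cdots\partial_{\epsilon_{i_\ell}}u^{(2)}|_{\epsilon=0}$ for $\ell<k$. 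Differentiating \eqref{eq:nonlinear-diffusion-main} in $\epsilon_{i_1},\ldots,\epsilon_{i_k}$ at $\epsilon=0$ and subtracting, only the "all singletons" partition survives and the difference $w := \partial_{\epsilon_{i_1}}\cdots\partial_{\epsilon_{i_k}}(u^{(1)} - u^{(2)})|_{\epsilon=0}$ satisfies
\begin{equation*}
\partial_t w + (-\Delta)^s w \;=\; -\bigl(\partial_z^k q_1 - \partial_z^k q_2\bigr)(t,x,0)\, V_{i_1}\cdots V_{i_k} \quad \text{in } \Omega_T,
\end{equation*}
with $w\equiv 0$ in $\Omega_T^e$ and $w(0,\cdot)=0$; moreover, differentiating the DN-map identity $\Lambda^{\mathrm{heat}}_{q_1}(f_\epsilon) = \Lambda^{\mathrm{heat}}_{q_2}(f_\epsilon)$ forces $(-\Delta)^s w = 0$ on $V_T$.

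Test $w$ against a backward solution $\varphi$ of $-\partial_t\varphi + (-\Delta)^s \varphi = 0$ in $\Omega_T$ with terminal datum $\varphi(T,\cdot) = 0$ and exterior data $g \in \mathcal{C}_c^\infty(V_T)$. Integrating by parts in time (using $w(0)=0=\varphi(T)$) and in space (using the symmetry of $(-\Delta)^s$ together with $w=0$ in $\Omega_T^e$, $\varphi=g$ supported in $V_T$, and $(-\Delta)^s w = 0$ on $V_T$), all boundary contributions vanish and one obtains the orthogonality identity
\begin{equation*}
\int_0^T\!\!\int_\Omega \bigl(\partial_z^k q_1 - \partial_z^k q_2\bigr)(t,x,0)\, V_{i_1}(t,x)\cdots V_{i_k}(t,x)\, \varphi(t,x)\, dx\,dt \;=\; 0,
\end{equation*}
valid for all $f_{i_1},\ldots,f_{i_k} \in \mathcal{C}_c^\infty(W_T)$ and $g \in \mathcal{C}_c^\infty(V_T)$. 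Invoking the Runge approximation for the forward free fractional heat equation with exterior data in $W$ and, separately, for the backward free equation with exterior data in $V$ (both consequences of fUCP), the products $V_{i_1}\cdots V_{i_k}\varphi$ are dense in $L^1(\Omega_T)$ in the sense needed to conclude $\partial_z^k q_1(\cdot,0) \equiv \partial_z^k q_2(\cdot,0)$ on $\Omega_T$, closing the induction and yielding \eqref{eq:jet-recover}. In the analytic case, agreement of all Taylor coefficients at $z=0$ extends by analyticity to $q_1 \equiv q_2$ on $\Omega_T \times I$.

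The hard part is the parabolic Runge approximation in the multiplicative form required: one needs $L^2$-density of free-parabolic forward solutions (with exterior data in $W_T$) restricted to $\Omega_T$, obtained from fUCP via a Hahn--Banach duality, together with the analogous statement for the backward adjoint equation, and then enough uniform control on the $V_{i_\ell}$ to upgrade the individual $L^2$-approximations through the $(k+1)$-fold product into $L^1$-density, which is exactly what is needed to test against the smooth coefficient $\partial_z^k q_1 - \partial_z^k q_2$. A secondary technical point is justifying the $\mathcal{C}^{m+1}$ dependence of $u^{(j)}$ on $\epsilon$ under only the finite regularity assumed in \ref{itm:q1}, which calls for an implicit-function-theorem argument in a Bochner space built on the linear well-posedness theory for $\partial_t + (-\Delta)^s$.
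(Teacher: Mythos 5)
Your linearization framework — the Fa\`a di Bruno expansion, the observation that \ref{itm:q3} forces $\partial_z q_j(\cdot,0)\equiv 0$ so that the first linearization solves the free equation, and the inductive cancellation of lower-order terms — matches the paper's Section~\ref{sec:inverse-problem-diffusion}. Where you diverge is at the step that extracts information from the $k$-th linearization: the paper applies the unique continuation property (Lemma~\ref{lem:UCP}) directly to $w$, since $w=0$ in $\Omega_T^e\supset V_T$ and $(-\Delta)^s w = 0$ in $V_T$ together give $w\equiv 0$, and then the PDE itself yields the \emph{pointwise} identity $\big(\partial_z^k q_1-\partial_z^k q_2\big)(\cdot,0)\,V_{i_1}\cdots V_{i_k}=0$ in $\Omega_T$ with no adjoint factor. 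Your adjoint-testing derivation of the orthogonality identity is a valid alternative (it is essentially the integration by parts used to prove Proposition~\ref{prop:Runge-diffusion}, run in reverse), but it is a detour: it recovers the same information only after invoking a \emph{backward} Runge property, and it inserts an extra factor $\varphi$ into the product.

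The genuine gap is the final density step, which you flag as ``the hard part'' without resolving. The claim that the $(k+1)$-fold products $V_{i_1}\cdots V_{i_k}\varphi$ are dense in $L^1(\Omega_T)$ is both unproved and unnecessary; the set of such products is not a linear subspace, and the fUCP-based Runge approximation (Proposition~\ref{prop:Runge-diffusion}) gives $L^2$-density of each family of solutions separately, not of their pointwise products. The standard conclusion is iterative: fix all but one factor, use the $L^\infty(\Omega_T)$ bounds from Proposition~\ref{prop:sup-bound} to place the frozen part in $L^2(\Omega_T)$, invoke $L^2$-Runge density of the remaining factor to deduce a.e.\ vanishing, then repeat. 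Under the paper's UCP route this requires $k$ peelings with the forward Runge only; under your adjoint route it requires $k+1$ peelings and Runge for both the forward free equation (data in $W_T$) and the backward one (data in $V_T$). So the proposal is salvageable by replacing the ``$L^1$-density of products'' claim with the iterative argument, but as written the step is not justified and the approach is more complicated than the one the paper actually uses.
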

Following the ideas from \cite{GSU_jfa}, one can strengthen above result and  recover the coefficients based on a finite dimensional data set. Our next corollary is related to a single measurement result for linear fractional  Laplace equation, which can be proved  by examining carefully the proof of Theorem \ref{thm:main-diffusion}.

\begin{cor} [Recovery of $m$-jet from $m$-dimensional measurements] \label{cor:diffusion-m-measurement}
Suppose the assumptions in Theorem~{\rm \ref{thm:main-diffusion}} hold. We further assume that for $j=1,2$ 
\[
\partial_{z}^{k} q_{j}(\cdot,0)\in \mathcal{C}^{0}(\overline{\Omega}) \text{ is independent of time variable }t .
\]
Fix any $g_{1},\cdots,g_{m} \in \mathcal{C}_{c}^{\infty}(W_{T})$ such that $g_{1}(t_{0},\cdot),\cdots,g_{m}(t_{0},\cdot) \not\equiv 0$ for some $t_{0} \in (0,T)$. Then 
$
\Lambda^{\rm heat}_{q_{1}}(\epsilon_{1}g_{1} + \cdots + \epsilon_{m}g_{m}) = \Lambda^{\rm heat}_{q_{2}}(\epsilon_{1}g_{1} + \cdots + \epsilon_{m}g_{m})$, 
for all sufficiently small $\epsilon_{j}>0$ ($j=1,\cdots,m$), implies \eqref{eq:jet-recover}.
\end{cor}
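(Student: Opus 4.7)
The approach is higher-order linearization with the interior solutions fixed, as in the proof of Theorem \ref{thm:main-diffusion}, using the time-independence of $\partial_z^k q_j(\cdot, 0)$ to compensate for the loss of Runge freedom on the exterior-input side. First, I would reuse the setup of Theorem \ref{thm:main-diffusion}: for small $\epsilon = (\epsilon_1, \ldots, \epsilon_m)$, let $u^\epsilon_j$ solve \eqref{eq:nonlinear-diffusion-main} with coefficient $q_j$ and exterior data $\sum_\ell \epsilon_\ell g_\ell$. Assumptions \ref{itm:q2}--\ref{itm:q3} force $q_j(t, x, 0) = \partial_z q_j(t, x, 0) = 0$, so the first $\epsilon_\ell$-derivative at $\epsilon = 0$ is the common ``free'' solution $v_\ell$ of $\partial_t v_\ell + (-\Delta)^s v_\ell = 0$ in $\Omega_T$ with exterior data $g_\ell$ and zero initial condition. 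Proceeding inductively on $k$ (the cases $k = 0, 1$ being immediate), for $2 \leq k \leq m$ and each $k$-subset $\{i_1, \ldots, i_k\} \subseteq \{1, \ldots, m\}$, taking the mixed derivative $\partial_{\epsilon_{i_1}} \cdots \partial_{\epsilon_{i_k}}|_{\epsilon = 0}$ and subtracting the two equations cancels all lower-order contributions by the induction hypothesis, leaving a linear equation with source $h_k(x) v_{i_1} \cdots v_{i_k}$, where $h_k := \partial_z^k q_1(\cdot, 0) - \partial_z^k q_2(\cdot, 0)$ is time-independent by assumption.

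Pairing this linear equation with an adjoint free solution $\psi$ (satisfying $-\partial_t \psi + (-\Delta)^s \psi = 0$ with zero terminal condition and exterior data in $\mathcal{C}_c^\infty(V_T)$), the DN-map equality on the $m$-parameter family $\sum_\ell \epsilon_\ell g_\ell$ yields
\[
\int_0^T \int_\Omega h_k(x) \, v_{i_1}(t, x) \cdots v_{i_k}(t, x) \, \psi(t, x) \, dx \, dt = 0
\]
for all such $\psi$. Invoking Runge density of the restrictions $\psi|_{\Omega_T}$ in $L^2(\Omega_T)$ (the same consequence of fUCP used in Theorem \ref{thm:main-diffusion}) then gives $h_k(x) \, v_{i_1}(t, x) \cdots v_{i_k}(t, x) = 0$ in $\Omega_T$.

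The hard step is deducing $h_k \equiv 0$ from this multiplicative identity when the $v_\ell$'s are \emph{fixed}. Argue by contradiction: if $A := \{x \in \Omega : h_k(x) \neq 0\}$ is nonempty, it is open by continuity of $h_k$, and $v_{i_1} \cdots v_{i_k} \equiv 0$ on the open cylinder $(0, T) \times A$. Writing $\{v_{i_1} \cdots v_{i_k} = 0\} = \bigcup_j \{v_{i_j} = 0\}$ as a finite union of closed sets covering the open set $(0, T) \times A$, Baire's theorem produces some index $\ell$ and a nonempty open product set $I \times V \subseteq (0, T) \times A$ on which $v_{i_\ell}$ vanishes identically. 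Then $\partial_t v_{i_\ell} \equiv 0$ on $I \times V$, so the equation forces $(-\Delta)^s v_{i_\ell} \equiv 0$ on $I \times V$ as well, and spatial fUCP at each $t \in I$ yields $v_{i_\ell}(t, \cdot) \equiv 0$ on $\mathbb{R}^n$ for every $t \in I$, hence $g_{i_\ell} \equiv 0$ on $I \times W$. The main obstacle is that the interval $I$ produced by Baire need not contain $t_0$, so this does not yet contradict $g_{i_\ell}(t_0, \cdot) \not\equiv 0$. Closing this gap requires a time-rigidity property of $v_{i_\ell}$ — specifically, that $t \mapsto v_{i_\ell}(t, x)$ is real-analytic (or at least quasi-analytic) on $(0, T)$ for each $x$, which upgrades vanishing on one subinterval to vanishing on all of $(0, T)$; in that case $I$ may be taken to contain $t_0$, forcing $g_{i_\ell}(t_0, \cdot) \equiv 0$ and completing the induction via the assumed time-independence of $\partial_z^k q_j(\cdot, 0)$.
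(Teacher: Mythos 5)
Your proposal follows the paper's strategy (higher-order linearization, the product identity $h_k\,v_{i_1}\cdots v_{i_k}=0$ in $\Omega_T$, Baire category plus the fUCP of Lemma~\ref{lem:UCP}), and you have correctly spotted the delicate step that the paper compresses into the phrase ``a simple contradiction argument'': the Baire interval $I$ need not contain $t_0$. But your proposed repair via time-analyticity of $t\mapsto v_{i_\ell}(t,x)$ is not available here. The interior part of $v_{i_\ell}$ is given by Duhamel with source $-(-\Delta)^s g_{i_\ell}$, and since $g_{i_\ell}$ is only $\mathcal{C}^\infty_c(W_T)$ in $t$, the resulting solution is generically only $\mathcal{C}^\infty$, not real-analytic or quasi-analytic, in $t$; vanishing on a subinterval does not propagate to $(0,T)$.

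The correct way to close the gap is purely topological and needs nothing beyond the regularity you already have. Set $T_\ell:=\{t\in(0,T):v_{i_\ell}(t,\cdot)\equiv0\ \text{in}\ \mathbb{R}^n\}$. Since $v_{i_\ell}\in H^1(0,T;L^2(\Omega))\hookrightarrow C([0,T];L^2(\Omega))$ and $g_{i_\ell}$ is continuous, each $T_\ell$ is closed in $(0,T)$. By Baire, $\bigcup_\ell\mathrm{int}\{v_{i_\ell}=0\}$ is dense in $(0,T)\times A$; projecting onto the $t$-axis and applying your $I\times V$ argument at each such interior point shows that $\bigcup_\ell T_\ell$ contains a dense subset of $(0,T)$. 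Being a finite union of closed sets it is closed, hence equals $(0,T)$. In particular $t_0\in T_\ell$ for some $\ell$, so $g_{i_\ell}(t_0,\cdot)=v_{i_\ell}(t_0,\cdot)|_W\equiv0$, contradicting the hypothesis. This is precisely what makes the paper's claim (that for each $x_0\in\Omega$ there are $x_k\to x_0$ and $t_k\in(0,T)$ with all $\partial_{\epsilon_\ell}u^{\bm{\epsilon}}|_{\bm{\epsilon}=0}(t_k,x_k)\neq0$) hold; the conclusion then follows from the time-independence and continuity of $h_k$, as you and the paper both argue.
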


In this article, we also take into consideration a nonlinear inverse problem for fractional wave equations in one spatial dimension.
Let $u=u(t,x)$ satisfy
\begin{equation} \label{eq:main-wave}
\begin{cases}
\partial_{t}^{2}u(t,x)+(-\Delta)^{s}u(t,x)+q(t,x,u(t,x))=0 & \text{in}\;\;\Omega_{T},\\
u(t,x)=f(t,x) & \text{in}\;\;\Omega_{T}^{e},\\
u(0,x)=\partial_{t}u(0,x)=0 & \text{for all}\;\;x\in\Omega,
\end{cases}
\end{equation}
for certain appropriate exterior data.
We can define the following
hyperbolic DN-map corresponding to \eqref{eq:main-wave} as follows:
\[
\Lambda_{q}^{{\rm wave}}(f):=(-\Delta)^{s}u \big|_{V_{T}} \quad \text{for all ``sufficiently small'' } f \in \mathcal C_c^{\infty}(W_{T}),
\]
where $u$ is the unique solution of \eqref{eq:main-wave}, see Proposition \ref{prop:well-posedness-wave-nonlinear}
for the well-posedness. The  following result can be proved adapting the similar ideas: 

\begin{thm} [Global uniqueness from DN-map] \label{thm:main-wave}
	Let $n=1$ and $1/2<s<1$.
	Let $\Omega \subset \mathbb{R}$ be a bounded open set, let $W,V \subset \Omega^{e}$ be any open sets satisfying $\overline{V}\cap\overline{\Omega}=\emptyset$ and $\overline{W}\cap\overline{\Omega}=\emptyset$.
	Fix any integer $m \ge 2$ and a positive number $\delta>0$.
	Assume that $q_{j}$ ($j=1,2$) satisfy \ref{itm:q1}--\ref{itm:q4}. Then there exists a constant $\tilde{\epsilon}_{0}=\tilde{\epsilon}_{0}(s,\Omega,T,\delta)$ such that, if $\Lambda_{q_{1}}^{{\rm wave}}(f)=\Lambda_{q_{2}}^{{\rm wave}}(f)$ for all $f\in\mathcal{C}_{c}^{\infty}(W_{T})$ satisfying \eqref{eq:small-f}, then we have \eqref{eq:jet-recover}. Additionally, if we assume $z :\rightarrow q(t,x,z)$ is analytic for $(t,x)\in \Omega_T$ then we have 	\begin{equation*} 
	q_{1}(t,x,z)=q_{2}(t,x,z)\quad \forall (t,x)\in\Omega_{T}, \ \forall z \in I.
	\end{equation*}
\end{thm}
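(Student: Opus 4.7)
The plan is to adapt the higher-order linearization scheme used for the parabolic problem (Theorem~\ref{thm:main-diffusion}) to the hyperbolic setting. By Proposition~\ref{prop:well-posedness-wave-nonlinear}, for sufficiently small exterior data $f\in\mathcal{C}_{c}^{\infty}(W_{T})$ the semilinear wave equation \eqref{eq:main-wave} admits a unique solution $u=u_{q}(f)$ that depends smoothly on the data. I would take $f=\sum_{\ell=1}^{m}\epsilon_{\ell}f_{\ell}$ with $f_{\ell}\in\mathcal{C}_{c}^{\infty}(W_{T})$ and $0<\epsilon_{\ell}\ll 1$, and differentiate in the parameters $\epsilon_{\ell}$ at $\epsilon=0$. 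The first derivative $v^{(\ell)}=\partial_{\epsilon_{\ell}}u|_{\epsilon=0}$ solves the linear fractional wave equation
\[
\partial_{t}^{2}v^{(\ell)}+(-\Delta)^{s}v^{(\ell)}+\partial_{z}q(t,x,0)\,v^{(\ell)}=0,\qquad v^{(\ell)}|_{\Omega_{T}^{e}}=f_{\ell},
\]
with zero initial data, while the mixed $k$-fold derivative $w_{k}=\partial_{\epsilon_{\ell_{1}}}\cdots\partial_{\epsilon_{\ell_{k}}}u|_{\epsilon=0}$ satisfies an analogous equation with a right-hand side polynomial in the lower-order derivatives $v^{(\ell_{j})}$ and the Taylor coefficients $\partial_{z}^{j}q(\cdot,0)$ for $j\le k$.

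From $\Lambda_{q_{1}}^{\rm wave}=\Lambda_{q_{2}}^{\rm wave}$ on all admissibly small data, applying $\partial_{\epsilon_{\ell_{1}}}\cdots\partial_{\epsilon_{\ell_{k}}}|_{\epsilon=0}$ and restricting to $V_{T}$ shows that the exterior data and the Neumann-type trace $(-\Delta)^{s}w_{k}|_{V_{T}}$ agree for both coefficients. Pairing the equation for the difference with a solution $\phi$ of the (time-reversed) adjoint linearized wave equation, whose exterior data is supported in $V_{T}$, and integrating by parts (the boundary terms at $t=0,T$ vanish by the prescribed initial conditions and the support of $\phi$), I would obtain by induction on $k$ the Alessandrini-type integral identity
\[
\int_{\Omega_{T}}\bigl(\partial_{z}^{k}q_{1}-\partial_{z}^{k}q_{2}\bigr)(t,x,0)\,v^{(1)}(t,x)\cdots v^{(k)}(t,x)\,\phi(t,x)\,dt\,dx=0,
\]
valid for arbitrary $v^{(j)}$ solving the linear fractional wave equation with coefficient $\partial_{z}q_{1}(\cdot,0)$ (or $\partial_{z}q_{2}(\cdot,0)$, which by the $k=1$ step already coincide on $\Omega_{T}$) and arbitrary adjoint solution $\phi$.

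The next step is a Runge approximation argument. The restriction $n=1$ and $s>1/2$ is precisely what gives the well-posedness of the linear fractional wave equation in a convenient energy space; its source-to-solution map has dense range in $L^{2}(\Omega_{T})$, as a consequence of the fractional unique continuation property for $(-\Delta)^{s}$ on $\mathbb{R}$ applied to the adjoint problem (the same mechanism that underlies the Runge property exploited in the heat case). Therefore products $v^{(1)}\cdots v^{(k)}\phi$ are dense in $L^{1}(\Omega_{T})$ among products of $k+1$ arbitrary $L^{\infty}$-functions on $\Omega_{T}$, and an approximation-by-bump-functions argument then forces $\partial_{z}^{k}q_{1}(t,x,0)=\partial_{z}^{k}q_{2}(t,x,0)$ pointwise on $\Omega_{T}$ for every $k=0,1,\ldots,m$, which is \eqref{eq:jet-recover}. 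The analytic-in-$z$ assumption upgrades this jet coincidence to equality of $q_{1}$ and $q_{2}$ on $\Omega_{T}\times I$ by the identity theorem.

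I expect the main obstacle to be the Runge approximation for the fractional wave equation on the full cylinder $\Omega_{T}$: in a hyperbolic problem, finite speed of propagation in the local direction coexists with nonlocality in space, and one must verify that Hahn--Banach combined with fUCP actually yields density on all of $\Omega_{T}$ rather than only on a sub-cylinder. This is exactly where the assumption $s>1/2$ enters, ensuring that the adjoint wave equation has the correct trace theory so that testing against exterior data in $\mathcal{C}_{c}^{\infty}(V_{T})$ detects vanishing throughout $\Omega_{T}$; once this density statement is secured, the remainder of the argument parallels the parabolic proof of Theorem~\ref{thm:main-diffusion} essentially verbatim.
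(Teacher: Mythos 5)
Your proposal follows the same broad higher-order-linearization-plus-Runge framework as the paper, but it diverges at one substantive step. After establishing that the difference $v$ of the $k$-th order linearizations satisfies $v=0$ and $(-\Delta)^{s}v=0$ on $V_{T}$, the paper applies the antilocality statement (Lemma~\ref{lem:UCP}) \emph{directly} to $v$: since $v=(-\Delta)^{s}v=0$ on an open spacetime cylinder, $v\equiv 0$ on all of $\mathbb{R}^{n}_{T}$. Plugging $v\equiv 0$ into the equation for $v$ gives the \emph{pointwise} relation $(\partial_{z}^{k}q_{1}-\partial_{z}^{k}q_{2})(\cdot,0)\,\prod_{\ell}\partial_{\epsilon_{\ell}}u^{\bm{\epsilon}}|_{\bm{\epsilon}=0}=0$ in $\Omega_{T}$, and then a single use of the Runge density of the first-linearization solutions (one factor at a time) finishes the argument. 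You instead pair the difference equation with an adjoint solution $\phi$ supported exterior-wise in $V_{T}$ and integrate by parts to derive an Alessandrini-type integral identity $\int_{\Omega_{T}}(\partial_{z}^{k}q_{1}-\partial_{z}^{k}q_{2})(\cdot,0)\,v^{(1)}\cdots v^{(k)}\phi=0$. This also works (the nonlocal boundary contributions vanish precisely because $v$ vanishes in $\Omega^{e}_{T}$ and $(-\Delta)^{s}v$ vanishes on $V_{T}$), but it is an extra layer: the integral identity is a weaker intermediate statement than the pointwise one, and it forces you to invoke Runge both for the forward solutions $v^{(\ell)}$ and for the adjoint solution $\phi$, and then to discuss density of $(k+1)$-fold products. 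Your phrasing of that density (``products dense in $L^{1}$ among products of $k+1$ arbitrary $L^{\infty}$ functions'') should be replaced by the cleaner inductive argument: fix all but one factor, use $L^{2}$-density of that factor's Runge family, conclude the remaining product vanishes a.e., and iterate. A small further remark: assumption \ref{itm:q3} forces $\partial_{z}q_{j}(t,x,0)\equiv 0$, so the first-order linearization at $\bm{\epsilon}=0$ is the free equation $(\partial_{t}^{2}+(-\Delta)^{s})v^{(\ell)}=0$ with no zeroth-order term; retaining $\partial_{z}q(t,x,0)v^{(\ell)}$ is harmless but unnecessary. What the paper's direct-UCP route buys is that it bypasses the adjoint solution entirely and yields the stronger pointwise conclusion immediately; what your Alessandrini route buys is a closer resemblance to the classical elliptic Calder\'{o}n template, but at the cost of an extra density argument.
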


The next corollary is analogous to Corollary \ref{cor:diffusion-m-measurement}.

\begin{cor} [Recovery of $m$-jet from $m$-dimensional measurements] \label{cor:wave-m-measurement}
	Suppose the assumptions in Theorem~{\rm \ref{thm:main-wave}} hold. We further assume that 
		\[
		\partial_{z}^{k} q_{j}(\cdot,0)\in \mathcal{C}^{0}(\overline{\Omega}) \text{ is independent of time variable }t.
		\]
		Fix any $g_{1},\cdots,g_{m} \in \mathcal{C}_{c}^{\infty}(W_{T})$ such that $g_{1}(t_{0},\cdot),\cdots,g_{m}(t_{0},\cdot) \not\equiv 0$ for some $t_{0} \in (0,T)$. If 
	$
\Lambda^{\rm wave}_{q_{1}}(\epsilon_{1}g_{1} + \cdots + \epsilon_{m}g_{m}) = \Lambda^{\rm wave}_{q_{2}}(\epsilon_{1}g_{1} + \cdots + \epsilon_{m}g_{m})
	$
		for all sufficiently small $\epsilon_{j}>0$ ($j=1,\cdots,m$), then we conclude \eqref{eq:jet-recover}.
\end{cor}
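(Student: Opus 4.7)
The plan is to carry out the higher-order linearization of Theorem~\ref{thm:main-wave}, while checking that the $m$-parameter family $\{\epsilon_1 g_1 + \cdots + \epsilon_m g_m : \epsilon_j > 0 \text{ small}\}$ supplies enough information once the coefficients $\partial_z^k q_j(\cdot,0)$ are continuous on $\overline{\Omega}$ and independent of $t$. The case $k=0$ of \eqref{eq:jet-recover} is automatic from \ref{itm:q2}, and I would proceed by induction on $k = 1, \dots, m$. For the base case $k=1$, differentiating $\Lambda_{q_1}^{{\rm wave}}(\sum_l \epsilon_l g_l) = \Lambda_{q_2}^{{\rm wave}}(\sum_l \epsilon_l g_l)$ in $\epsilon_j$ at $\epsilon = 0$ produces $(-\Delta)^s v_j^{(1)} = (-\Delta)^s v_j^{(2)}$ on $V_T$, where $v_j^{(l)}$ is the first linearization of \eqref{eq:main-wave} with coefficient $q_l$ and exterior data $g_j$. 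Since $v_j^{(1)} - v_j^{(2)}$ also vanishes on $\Omega_T^e$, applying fUCP at each fixed time $t$ gives $v_j^{(1)} = v_j^{(2)}$ on all of $\mathbb{R}\times\mathbb{R}$; denote this common solution by $v_j$. Subtracting the two linearized equations then yields
\[
r_1(x)\, v_j(t,x) = 0 \text{ in } \Omega_T, \qquad r_1 := \partial_z q_1(\cdot, 0) - \partial_z q_2(\cdot, 0) \in \mathcal{C}^0(\overline{\Omega}).
\]

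The induction step follows the same pattern. Assuming $r_{k'} \equiv 0$ on $\Omega$ for all $k' < k$, I would differentiate the DN-map identity via $\partial_{\epsilon_{j_1}} \cdots \partial_{\epsilon_{j_k}}|_{\epsilon = 0}$ and invoke the Alessandrini-type integral identity together with the Runge approximation for the adjoint of the linearized wave equation (both established in the proof of Theorem~\ref{thm:main-wave}) to arrive at
\[
r_k(x)\, v_{j_1}(t,x) \cdots v_{j_k}(t,x) = 0 \text{ a.e.\ in } \Omega_T
\]
for every $(j_1, \dots, j_k) \in \{1, \dots, m\}^k$; specializing to $j_1 = \cdots = j_k = j$ gives $r_k(x)\, v_j(t,x) = 0$ for each $j = 1, \dots, m$. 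To close the induction I argue by contradiction. Suppose $r_k \not\equiv 0$; since $r_k$ is continuous, $E := \{x \in \Omega : r_k(x) \neq 0\}$ is a non-empty open subset of $\Omega$, and $v_j$ vanishes on $(0,T) \times E$ for every $j$. Then $\partial_t^2 v_j \equiv 0$ on $(0,T) \times E$ as well, so the linearized equation forces $(-\Delta)^s v_j(t, \cdot) = 0$ on $E$ for a.e.\ $t$; combined with $v_j(t, \cdot)|_E = 0$, fUCP for $(-\Delta)^s$ yields $v_j(t, \cdot) \equiv 0$ on $\mathbb{R}$ for a.e.\ $t$. But $v_j = g_j$ on $W_T$ and $g_j$ is continuous, so this would force $g_j(t_0, \cdot) \equiv 0$ on $W$, contradicting the hypothesis. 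Hence $r_k \equiv 0$, closing the induction.

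I expect the principal obstacle to lie in this last unique-continuation argument, since the bookkeeping of the higher-order linearization and the Runge approximation is already in place from the proof of Theorem~\ref{thm:main-wave}. The delicate point is that $E$ must be open, not merely of positive measure, so that the spatial fUCP of $(-\Delta)^s$ can be invoked at each time $t$; this is precisely why the continuity hypothesis $\partial_z^k q_j(\cdot, 0) \in \mathcal{C}^0(\overline{\Omega})$ is imposed. A secondary subtlety is the validity of the Alessandrini identity at order $k$ despite the restricted exterior data: both sides of the DN-map equality are analytic in the $\epsilon_j$'s on an open cone of small positive values, so all mixed partial derivatives at $\epsilon = 0$ automatically coincide.
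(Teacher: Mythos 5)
Your argument is essentially the paper's: higher-order linearization of the DN-map combined with a contradiction argument via the fUCP of $(-\Delta)^s$ and the continuity of $\partial_z^k q_j(\cdot,0)$, which is precisely the route sketched in the proof of Corollary~\ref{cor:diffusion-m-measurement} (there packaged as extracting a sequence $(t_\ell,x_\ell)\to(\cdot,x_0)$ on which the first linearizations are nonzero; yours as showing the open set $E=\{r_k\neq 0\}$ must be empty — contrapositive forms of the same step). One minor correction: the intermediate identity $r_k\prod_\ell v_{j_\ell}=0$ in $\Omega_T$ is not obtained via an Alessandrini-type integral identity and an adjoint Runge approximation, but directly by applying Lemma~\ref{lem:UCP} to the difference $\partial_{(k)}u_1^{\bm\epsilon}|_{\bm\epsilon=0}-\partial_{(k)}u_2^{\bm\epsilon}|_{\bm\epsilon=0}$, which solves a linear fractional wave equation with that source term, vanishing exterior data, and vanishing $(-\Delta)^s$-trace on $V_T$; also, the case $k=1$ is vacuous, since \ref{itm:q3} already forces $\partial_z q_j(t,x,0)\equiv 0$ for both $j$ (which is exactly why the first linearization at $\bm\epsilon=0$ carries no potential).
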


There are only a few work available in the literature about the inverse problems for fractional heat equations as well as  fractional wave equations. To motivate our work, we mention several closely related ones.  In \cite{Li21GlobalUniquenessSemilinear}, the author solved certain inverse problems for fractional type heat operators, however the assumptions on the nonlinear term in \cite{Li21GlobalUniquenessSemilinear} are different from ours. Then in  \cite{KLW21CalderonFractionalWave}, the authors studied an inverse problem involving fractional wave equation,
while in \cite{LLL21InverseProblemNonlinearWave}, the authors solved an inverse problem for hyperbolic systems.

The rest of the paper is organized as follows. We discuss the forward problem of the fractional diffusion equation in Section \ref{sec:The-forward-problems}.
We prove a Runge approximation for the fractional diffusion equation in Section \ref{sec:The-Runge-approximation}.
With these tools at hand, Section \ref{sec:inverse-problem-diffusion} is dedicated to the proof of Theorem~\ref{thm:main-diffusion}.
Finally, we investigate Theorem~\ref{thm:main-wave} in Section \ref{sec:Analogous-result-for}.
To make our paper self-contained, we also present the proof of the well-posedness of the linear fractional diffusion equation (Proposition~\ref{prop:well-posed-linear}) in Appendix~\ref{sec:well-posed-linear}.
Then in Appendix \ref{sec:conclusion} we discuss the issue of considering Theorem \ref{thm:main-wave} in one spatial dimension.

\section{The forward problem for the fractional
diffusion equation} \label{sec:The-forward-problems}

In this section, we prove several preliminaries that will be useful in this
work. 

\subsection{Fractional Sobolev spaces}

We use notations for fractional Sobolev spaces as in \cite{KLW21CalderonFractionalWave}.
To make the paper self-contained, we give brief introductions to them.
For $\alpha \in \mathbb R$, denote as $H^{\alpha}(\mathbb{R}^{n})$ the standard $L^{2}$-based fractional Sobolev spaces, which
is defined via Fourier transform \cite{DNPV12FractionalSobolev,Kwa17FractionalEquivalent,Ste16singular}.
For $s\in(0,1)$, in fact
\[
H^{s}(\mathbb{R}^{n})
= \big\{ u \in L^{2}(\mathbb{R}^{n}) \,\big|\, \frac {|u(x)-u(y)|} {|x-y|^{\frac{n}{2}+s}} \in L^{2}(\mathbb{R}^{n}\times\mathbb{R}^{n}) \big\} \quad\text{(as sets)}
\]
with equivalent norm: $
\|u\|_{H^{s}(\mathbb{R}^{n})}^{2}=\|u\|_{L^{2}(\mathbb{R}^{n})}^{2}+[u]_{\dot{H}^{s}(\mathbb{R}^{n})}^{2},
$
where 
\begin{equation}
[u]_{\dot{H}^{s}(\mathbb{R}^{n})}^{2}=\iint_{\mathbb{R}^{n}\times\mathbb{R}^{n}}\frac{|u(x)-u(y)|^{2}}{|x-y|^{n+2s}}\,\mathsf{d}x\,\mathsf{d}y.\label{eq:Gagliardo-seminorm}
\end{equation}
Here, \eqref{eq:Gagliardo-seminorm} is called the Aronszajn-Gagliardo-Slobodeckij
seminorm, see \cite[equation~(2.2)]{DNPV12FractionalSobolev} for reference.

Let $\mathscr{O}$ be any open set in $\mathbb{R}^{n}$, and let $\alpha\in\mathbb{R}$.
We define the following Sobolev spaces:
\begin{align*}
	&H^{\alpha}(\mathscr{O})
:= \, \{ u|_{\mathscr{O}} \big| u\in H^{\alpha}(\mathbb{R}^{n}) \},\,
	\tilde{H}^{\alpha}(\mathscr{O})
	:=  \, \text{closure of }\mathcal{C}_{c}^{\infty}(\mathscr{O})\text{ in }H^{\alpha}(\mathbb{R}^{n}) \\
&	H_{0}^{\alpha}(\mathscr{O})
	:=  \, \text{closure of }\mathcal{C}_{c}^{\infty}(\mathscr{O})\text{ in }H^{\alpha}(\mathscr{O}), \,
	H_{\overline{\mathscr{O}}}^{\alpha}
	:= \, \{ u \in H^{\alpha}(\mathbb{R}^{n}) \,\big|\, {\rm supp}\,(u) \subset \overline{\mathscr{O}} \}.
\end{align*}
The Sobolev space $H^{\alpha}(\mathscr{O})$ is complete under the
quotient norm 
\[
\|u\|_{H^{\alpha}(\mathscr{O})}:=\inf\begin{Bmatrix}\begin{array}{l|l}
\|v\|_{H^{\alpha}(\mathbb{R}^{n})} & v\in H^{\alpha}(\mathbb{R}^{n})\text{ and }v|_{\mathscr{O}}=u\end{array}\end{Bmatrix}.
\]
It is easy to see that $\tilde{H}^{\alpha}(\mathscr{O})\subset H_{0}^{\alpha}(\mathscr{O})$,
and that $H_{\overline{\mathscr{O}}}^{\alpha}$ is a closed subspace
of $H^{\alpha}(\mathbb{R}^{n})$.
If $\Omega$ is a bounded Lipschitz domain, then we also have
following identifications (with equivalent norms): 
\begin{equation*}
	\left\{\begin{aligned}
	\tilde{H}^{\alpha}(\Omega)
	& = H_{\overline{\Omega}}^{\alpha}, \quad
	(H_{\overline{\Omega}}^{\alpha})' = H^{-\alpha}(\Omega)\text{ and }(H^{\alpha}(\Omega))'=H_{\overline{\Omega}}^{-\alpha}\quad \forall \alpha\in\mathbb{R},\\
	H^{s}(\Omega)
	& = H_{\overline{\Omega}}^{s}=H_{0}^{s}(\Omega) \quad \forall -1/2 < s < 1/2,
	\end{aligned}\right.
\end{equation*}
see e.g.~\cite[Section~2A]{GSU20Calderon}, \cite[Chapter~3]{McL00EllipticSystems},
and \cite{Tri02FunctionSpace}. Next  following \cite[Chapter 5]{Eva10PDE}, we define  time dependent fractional Sobolev space for all integers $p\ge 1$ denoted by $L^p((0,T);H^s)$. Then the exterior norm of $f\in \mathcal{C}^{\infty}_c(W_T)$ is given by
	\begin{equation} \label{eq:small-f}
	\|f\|_{{\rm ext}}^{2}
		:= \|f\|_{L^{\infty}(0,T;H^{s}(\mathbb{R}^{n}))\cap L^{\infty}(\mathbb{R}_{T}^{n})}^{2} + \|(-\Delta)^{s}f\|_{L^{2}(\Omega_{T})}^{2}.
	\end{equation}
Moreover, for any measurable set $A \subset \mathbb{R}^{n}$ we use the following notations:
\[
(f,g)_{L^{2}(A)}:=\int_{A}fg\,\mathsf{d}x,\quad(F,G)_{L^{2}(A_{T})}:=\int_{0}^{T}\int_{A}FG\,\mathsf{d}x\,\mathsf{d}t.
\]

\subsection{Well-posedness for the linear equation}

We state the well-posedness of the \emph{linear} fractional diffusion
equation. Let $T>0$, $s\in(0,1)$, and $a=a(t,x)\in L^{\infty}(\Omega_{T})$,
and we consider the following initial-exterior value problem: 
\begin{equation}
\begin{cases}
(\partial_{t}+(-\Delta)^{s}+a)u=F & \text{in}\;\;\Omega_{T},\\
u=f & \text{in}\;\;\Omega_{T}^{e},\\
u=\varphi & \text{in}\;\;\{0\}\times\mathbb{R}^{n},
\end{cases}\label{eq:linear-diffusion1}
\end{equation}
where $f\in\mathcal{C}_{c}^{\infty}(W_{T})$ for some open set with
Lipschitz boundary $W\subset\Omega_{e}$ satisfying $\overline{W}\cap\overline{\Omega}=\emptyset$,
and $\varphi\in\tilde{H}^{0}(\Omega)=\begin{Bmatrix}\begin{array}{l|l}
\varphi\in L^{2}(\mathbb{R}^{n}) & {\rm supp}\,\varphi\subset\overline{\Omega}\end{array}\end{Bmatrix}$.
Setting $v:=u-f$, we then consider the following linear equation
with zero exterior data: 
\begin{equation}
\begin{cases}
(\partial_{t}+(-\Delta)^{s}+a)v=\tilde{F} & \text{in}\;\;\Omega_{T},\\
v=0 & \text{in}\;\;\Omega_{T}^{e},\\
v=\varphi & \text{in}\;\;\{0\}\times\mathbb{R}^{n},
\end{cases}\label{eq:linear-diffusion2}
\end{equation}
where $\tilde{F}=F-(-\Delta)^{s}f$. Now it suffices to study the
well-posedness of \eqref{eq:linear-diffusion2}. 

Define functions $\bm{v}:[0,T]\rightarrow\tilde{H}^{s}(\Omega)$
and $\tilde{\bm{F}}:[0,T]\rightarrow L^{2}(\Omega)$ by 
\begin{equation} \label{eq:bvf-diff2}
	\begin{aligned}
	{} [\bm{v}(t)](x)  :=v(t,x),\quad
	{} [\tilde{\bm{F}}(t)](x) := \tilde F(t,x)\quad\text{for}\;\;(t,x)\in[0,T]\times\mathbb{R}^{n}.
	\end{aligned}
\end{equation}
Let $\langle\cdot,\cdot\rangle$ be the duality pairing on $H^{-s}(\Omega)\oplus\tilde{H}^{s}(\Omega)$.
Multiplying \eqref{eq:linear-diffusion2} by any $\phi\in\tilde{H}^{s}(\Omega)$
gives 
\[
\langle\bm{v}'(t),\phi\rangle+\mathcal{B}[\bm{v},\phi;t]=(\tilde{\bm{F}}(t),\phi)_{L^{2}(\Omega)}\quad\text{for}\;\;0\le t\le T,
\]
where $\mathcal{B}[\bm{v},\phi;t]$ is the bilinear form given by
\[
\mathcal{B}[\bm{v},\phi;t]:=\int_{\mathbb{R}^{n}}(-\Delta)^{s/2}\bm{v}(t)(-\Delta)^{s/2}\phi\,\mathsf{d}x+\int_{\Omega}a(t,\cdot)\bm{v}(t)\phi\,\mathsf{d}x.
\]

\begin{defn}
[Weak solutions] \label{def:weak-solution}We say that $v$ is a
weak solution of \eqref{eq:linear-diffusion2}, if 
\begin{enumerate}
\item [(a)] $v\in L^{2}(0,T;\tilde{H}^{s}(\Omega))$ and $v'\in L^{2}(0,T;H^{-s}(\Omega))$; 
\item [(b)] $\langle\bm{v}'(t),\phi\rangle+\mathcal{B}[\bm{v},\phi;t]=(\tilde{\bm{F}}(t),\phi)_{L^{2}(\Omega)}$
for all $\phi\in\tilde{H}^{s}(\Omega)$ for (almost) all $0\le t\le T$; 
\item [(c)] $\bm{v}(0)=\varphi$,
\end{enumerate}
where $\bm{v}$ and $\tilde{\bm F}$ are defined according to \eqref{eq:bvf-diff2}.
\end{defn}

\begin{prop}[Well-posedness] \label{prop:well-posed-linear}
Given any $n\in\mathbb{N}$
and $0<s<1$. Let $\Omega\subset\mathbb{R}^{n}$ be a bounded Lipschitz
domain in $\mathbb{R}^{n}$. Let $a\in L^{\infty}(\Omega_{T})$. For
any $\tilde{F}\in L^{2}(\Omega_{T})$ and $\varphi\in\tilde{H}^{0}(\Omega)$,
there exists a unique weak solution $v$ of \eqref{eq:linear-diffusion2} and satisfies the following estimate: 
\begin{equation} \label{eq:well-posedness-estimate1a}
\|v\|_{L^{\infty}(0,T;L^{2}(\Omega))}^{2} + \|v\|_{L^{2}(0,T;\tilde{H}^{s}(\Omega))}^{2} + \|\partial_{t}v\|_{L^{2}(0,T;H^{-s}(\Omega))}^{2}
\le C (\|\varphi\|_{L^{2}(\Omega)}^{2}+\|\tilde{F}\|_{L^{2}(\Omega_{T})}^{2} ) 
\end{equation}
for some constant $C=C(n,s,T,\|a\|_{L^{\infty}(\Omega_{T})})$. If
we further assume $\varphi\in\tilde{H}^{s}(\Omega)$, then $v\in L^{\infty}(0,T;\tilde{H}^{s}(\Omega))$
and $\partial_{t}v\in L^{2}(\Omega_{T})$. In this case, the unique
weak solution also satisfies the following estimate: 
\begin{equation} \label{eq:well-posedness-estimate1b}
\|v\|_{L^{\infty}(0,T;\tilde{H}^{s}(\Omega))}^{2}+\|\partial_{t}v\|_{L^{2}(\Omega_{T})}^{2}
\le C (\|\varphi\|_{\tilde{H}^{s}(\Omega)}^{2}+\|\tilde{F}\|_{L^{2}(\Omega_{T})}^{2} )
\end{equation}
for some constant $C=C(n,s,T,\|a\|_{L^{\infty}(\Omega_{T})})$. 
\end{prop}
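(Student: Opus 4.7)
The plan is to follow the standard Galerkin scheme for parabolic equations adapted to the fractional setting, essentially mimicking the approach in Evans' PDE book Chapter 7 with the bilinear form $\mathcal{B}$ replacing the elliptic one. First I would fix an orthonormal basis $\{w_k\}_{k\geq 1}$ of $L^{2}(\Omega)$ consisting of Dirichlet eigenfunctions of $(-\Delta)^{s}$ with associated eigenvalues $0<\lambda_{1}\le\lambda_{2}\le\cdots\to\infty$ (these exist because the embedding $\tilde{H}^{s}(\Omega)\hookrightarrow L^{2}(\Omega)$ is compact for $\Omega$ bounded Lipschitz, so the inverse of $(-\Delta)^{s}\colon\tilde{H}^{s}(\Omega)\to H^{-s}(\Omega)$ is a compact self-adjoint operator on $L^{2}(\Omega)$); these $w_{k}$ are also orthogonal in $\tilde{H}^{s}(\Omega)$. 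Then I would seek Galerkin approximations $\bm{v}_{N}(t)=\sum_{k=1}^{N}d_{N}^{k}(t)w_{k}$ satisfying
\[
(\bm{v}_{N}'(t),w_{k})_{L^{2}(\Omega)}+\mathcal{B}[\bm{v}_{N},w_{k};t]=(\tilde{\bm F}(t),w_{k})_{L^{2}(\Omega)},\quad k=1,\dots,N,
\]
with $\bm{v}_{N}(0)$ the $L^{2}$-projection of $\varphi$ onto $\mathrm{span}\{w_{1},\dots,w_{N}\}$. The existence of $\bm v_N$ on $[0,T]$ is a standard linear ODE system.

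Next I would derive the first energy estimate. Testing against $\bm{v}_{N}(t)$ and using that $\mathcal{B}[\bm{v}_{N},\bm{v}_{N};t]=\|(-\Delta)^{s/2}\bm{v}_{N}(t)\|_{L^{2}(\mathbb{R}^{n})}^{2}+(a(t)\bm{v}_{N}(t),\bm{v}_{N}(t))_{L^{2}(\Omega)}$, the fractional Poincaré inequality on $\tilde{H}^{s}(\Omega)$ yields coercivity modulo a term absorbed by the $\frac{d}{dt}\|\bm{v}_{N}\|_{L^{2}}^{2}$ through Grönwall. This produces
\[
\|\bm{v}_{N}\|_{L^{\infty}(0,T;L^{2}(\Omega))}^{2}+\|\bm{v}_{N}\|_{L^{2}(0,T;\tilde{H}^{s}(\Omega))}^{2}\le C\bigl(\|\varphi\|_{L^{2}(\Omega)}^{2}+\|\tilde F\|_{L^{2}(\Omega_{T})}^{2}\bigr).
\]
The bound on $\partial_{t}\bm{v}_{N}$ in $L^{2}(0,T;H^{-s}(\Omega))$ follows by duality: for any $\phi\in\tilde{H}^{s}(\Omega)$ with $\|\phi\|_{\tilde{H}^{s}}\le 1$, split $\phi=\phi_{1}+\phi_{2}$ with $\phi_{1}\in\mathrm{span}\{w_{1},\dots,w_{N}\}$ and use the Galerkin equation and continuity of $\mathcal{B}$.

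Then weak-* convergence along a subsequence $\bm{v}_{N}\rightharpoonup v$ in $L^{2}(0,T;\tilde{H}^{s}(\Omega))$ and $\partial_{t}\bm{v}_{N}\rightharpoonup v'$ in $L^{2}(0,T;H^{-s}(\Omega))$ allows passage to the limit in the weak formulation against a fixed $w_{k}$, and density of $\mathrm{span}\{w_{k}\}$ in $\tilde{H}^{s}(\Omega)$ gives (b) of Definition 2.1. The initial condition $v(0)=\varphi$ follows from $v\in \mathcal{C}([0,T];L^{2}(\Omega))$ (Aubin--Lions / the $L^{2}$-in-time regularity of $v$ and $v'$) together with weak convergence of the projected initial data. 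Uniqueness is immediate: the difference of two solutions, tested against itself, satisfies $\frac{d}{dt}\|\bm v\|_{L^{2}}^{2}+2\|(-\Delta)^{s/2}\bm{v}\|_{L^{2}}^{2}\le 2\|a\|_{L^{\infty}}\|\bm v\|_{L^{2}}^{2}$, so Grönwall closes the argument.

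For the improved estimate \eqref{eq:well-posedness-estimate1b} assuming $\varphi\in\tilde{H}^{s}(\Omega)$, the key trick is to test the Galerkin equation against $\bm v_N'(t)$ (allowed since $\bm v_N'\in\mathrm{span}\{w_{1},\dots,w_{N}\}$). Because $w_{k}$ are $\mathcal{B}_{0}$-orthogonal with $\mathcal{B}_{0}[u,\phi]:=\int_{\mathbb{R}^{n}}(-\Delta)^{s/2}u(-\Delta)^{s/2}\phi\,\mathsf{d}x$, one gets $\mathcal{B}_{0}[\bm v_N,\bm v_N']=\tfrac{1}{2}\frac{d}{dt}\|(-\Delta)^{s/2}\bm v_N\|_{L^{2}}^{2}$, and the remaining zeroth-order term is handled by Young's inequality. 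This yields the bound uniform in $N$, so passing to the limit gives $v\in L^{\infty}(0,T;\tilde{H}^{s}(\Omega))$ and $\partial_{t}v\in L^{2}(\Omega_{T})$. The main obstacle I anticipate is justifying the use of $\bm v_N'$ as a test function rigorously and making sure the bilinear form $\mathcal{B}$—whose principal part integrates over the whole of $\mathbb{R}^{n}$—still produces a clean $\frac{d}{dt}$ on the squared fractional seminorm; this is exactly why I work with the Dirichlet eigenbasis $\{w_{k}\}$ rather than an arbitrary Schauder basis, since the orthogonality of $(-\Delta)^{s/2}w_{k}$ in $L^{2}(\mathbb{R}^{n})$ makes the computation straightforward.
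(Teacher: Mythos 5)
Your proposal is correct and follows essentially the same Galerkin scheme as the paper's Appendix~\ref{sec:well-posed-linear}: the Dirichlet eigenbasis of $(-\Delta)^s$, the first energy estimate via testing against $\bm{v}_N$ combined with the fractional Poincar\'e/Hardy--Littlewood--Sobolev bound and Gr\"onwall, the $H^{-s}$ bound on $\bm{v}_N'$ via splitting a test function into its finite-dimensional projection, weak-$*$ passage to the limit, uniqueness via energy, and the improved estimate \eqref{eq:well-posedness-estimate1b} by testing against $\bm{v}_N'$. One minor remark: the identity $\mathcal{B}_0[\bm{v}_N,\bm{v}_N']=\tfrac12\tfrac{d}{dt}\|(-\Delta)^{s/2}\bm{v}_N\|_{L^2(\mathbb{R}^n)}^2$ needs only the symmetry of $\mathcal{B}_0$ and smoothness of $\bm{v}_N$ in $t$, not the $\mathcal{B}_0$-orthogonality of the eigenbasis, so the obstacle you anticipated in the last paragraph is not actually present.
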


The proof of Proposition~\ref{prop:well-posed-linear} is analogous to the standard well-posedness proof of the classical diffusion equation.
However, for completeness, we present a proof in Appendix~\ref{sec:well-posed-linear}.

\begin{cor}
\label{cor:well-posed-linear}
Given any $n\in\mathbb{N}$ and $0<s<1$.
Let $\Omega\subset\mathbb{R}^{n}$ be a bounded Lipschitz domain in
$\mathbb{R}^{n}$, and $W\subset\Omega^{e}$ be any open set with
Lipschitz boundary satisfying $\overline{W}\cap\overline{\Omega}=\emptyset$.
Let $a\in L^{\infty}(\Omega_{T})$. Then for any $\tilde{F}\in L^{2}(\Omega_{T})$,
$\varphi\in\tilde{H}^{0}(\Omega)$, and $f\in\mathcal{C}_{c}^{\infty}(W_{T})$,
there exists a unique weak solution $u=v+f$ of \eqref{eq:linear-diffusion1}
satisfying 
\begin{align*}
 & \|u-f\|_{L^{\infty}(0,T;L^{2}(\Omega))}^{2}+\|u-f\|_{L^{2}(0,T;\tilde{H}^{s}(\Omega))}^{2}+\|\partial_{t}(u-f)\|_{L^{2}(0,T;H^{-s}(\Omega))}^{2}\\
 & \le C(\|\varphi\|_{L^{2}(\Omega)}^{2}+\|F-(-\Delta)^{s}f\|_{L^{2}(\Omega_{T})}^{2})
\end{align*}
for some constant $C=C(n,s,T,\|a\|_{L^{\infty}(\Omega_{T})})$. If
we further assume $\varphi\in\tilde{H}^{s}(\Omega)$, then the unique
weak solution $u$ also satisfies the following estimate: 
\begin{equation} \label{eq:diffusion-regularity}
\|u-f\|_{L^{\infty}(0,T;H^{s}(\mathbb{R}^{n}))}^{2}+\|\partial_{t}u\|_{L^{2}(\Omega_{T})}^{2}
\le C (\|\varphi\|_{\tilde{H}^{s}(\Omega)}^{2}+\|F-(-\Delta)^{s}f\|_{L^{2}(\Omega_{T})}^{2})
\end{equation}
for some constant $C=C(n,s,T,\|a\|_{L^{\infty}(\Omega_{T})})$. 
\end{cor}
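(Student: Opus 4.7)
The plan is to reduce the problem \eqref{eq:linear-diffusion1} for $u$ with nonzero exterior data $f$ to the zero-exterior-data problem \eqref{eq:linear-diffusion2} for $v := u - f$, and then invoke Proposition~\ref{prop:well-posed-linear}.

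First I would verify that $v$ satisfies \eqref{eq:linear-diffusion2} with source $\tilde F := F - (-\Delta)^{s}f$ and initial datum $\varphi$. Because $\overline{W}\cap\overline{\Omega}=\emptyset$, the extension $f$ vanishes identically on $\Omega_T$, so $\partial_t f$ and $af$ also vanish there; applying $\partial_t+(-\Delta)^s+a$ to $u = v + f$ and restricting to $\Omega_T$ yields $(\partial_t+(-\Delta)^s+a)v = F-(-\Delta)^{s}f$. The exterior condition $u=f$ on $\Omega_T^e$ becomes $v=0$ on $\Omega_T^e$. For the initial condition, since $f\in\mathcal{C}_c^\infty(W_T)=\mathcal{C}_c^\infty((0,T)\times W)$ is compactly supported away from $t=0$, we have $f(0,\cdot)\equiv 0$, whence $v(0,\cdot)=\varphi$.

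Next I would check that $\tilde F\in L^2(\Omega_T)$, which reduces to $(-\Delta)^{s}f\in L^2(\Omega_T)$. For each fixed $t$, $f(t,\cdot)\in\mathcal{C}_c^\infty(\mathbb{R}^n)$ is supported in the fixed compact set $\overline{W}$, so by the Fourier-multiplier definition and Plancherel, $(-\Delta)^{s}f(t,\cdot)\in L^2(\mathbb{R}^n)$ with bounds that are uniform on the compact time-support of $f$; hence $(-\Delta)^{s}f\in L^2(\Omega_T)$, and we may freely add any $F\in L^2(\Omega_T)$ without leaving this class.

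With \eqref{eq:linear-diffusion2} properly set up, the existence, uniqueness, and the first energy estimate for $v$ follow directly from Proposition~\ref{prop:well-posed-linear}; rewriting $v=u-f$ gives the first stated estimate for $u$. For the higher-regularity estimate under $\varphi\in\tilde H^s(\Omega)$, estimate \eqref{eq:well-posedness-estimate1b} yields $v\in L^\infty(0,T;\tilde H^s(\Omega))$ and $\partial_t v\in L^2(\Omega_T)$. Using the identification $\tilde H^s(\Omega)=H^s_{\overline{\Omega}}$, a closed subspace of $H^s(\mathbb{R}^n)$ with equivalent norm (valid because $\Omega$ is Lipschitz), the quantity $\|v\|_{L^\infty(0,T;\tilde H^s(\Omega))}$ controls $\|u-f\|_{L^\infty(0,T;H^s(\mathbb{R}^n))}$; and since $\partial_t f\equiv 0$ on $\Omega_T$, we have $\partial_t u=\partial_t v$ there, so the $L^2(\Omega_T)$ bound on $\partial_t v$ is precisely the required bound on $\partial_t u$. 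No step is genuinely delicate---the whole proof is bookkeeping---and the only point requiring brief justification is $(-\Delta)^s f\in L^2(\Omega_T)$, which is where the nonlocality of the operator couples the exterior data $f$ into the interior source term.
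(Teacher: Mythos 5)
Your proposal is exactly the paper's intended argument: the paper sets up the change of variables $v := u - f$ reducing \eqref{eq:linear-diffusion1} to \eqref{eq:linear-diffusion2} with $\tilde F = F - (-\Delta)^{s}f$ immediately before Proposition~\ref{prop:well-posed-linear}, and then explicitly skips the proof of Corollary~\ref{cor:well-posed-linear} as a ``straightforward consequence'' of that proposition. Your write-up fills in the bookkeeping correctly, including the one non-trivial observation that $(-\Delta)^{s}f$ does not vanish in $\Omega_{T}$ despite $f$ being supported in $W_{T}$, and that $(-\Delta)^{s}f \in L^{2}(\Omega_{T})$ so the reduced source is admissible.
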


We skip the proof of Corollary~\ref{cor:well-posed-linear} as it is a straightforward consequence of Proposition~\ref{prop:well-posed-linear}.

\subsection{Maximum principle for the linear equation}

Modifying the ideas in \cite[Proposition~3.1]{LL19GlobalUniqueness}
or \cite[Proposition~4.1]{Ros16FractionalLaplacianSurvey}, we can
obtain the following proposition: 
\begin{prop}
[Maximum principle] \label{prop:maximum-principle}Given any $n\in\mathbb{N}$
and $0<s<1$. Let $\Omega\subset\mathbb{R}^{n}$ be a bounded Lipschitz
domain in $\mathbb{R}^{n}$.
Let $a\in L^{\infty}(\Omega_{T})$.
Suppose that 
$
u\in L^{2}(0,T;H^{s}(\mathbb{R}^{n})) \cap H^{1}(0,T;L^{2}(\Omega))
$
is a weak solution of \eqref{eq:linear-diffusion1}.
If 
$F \geq 0$ in $\Omega_{T}$, $f \geq 0$ in $\Omega_{T}^{e}$, $\varphi \geq 0$ in $\mathbb{R}^{n}$, then $u \geq 0$ in $\Omega_{T}$. 
\end{prop}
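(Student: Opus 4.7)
The plan is to adapt the classical Stampacchia test-function argument to the nonlocal parabolic setting: test the weak formulation against the negative part of $u$ and close with Gr\"onwall. First I would set $u_{-}:=\max(-u,0)$. Since $f\ge 0$ in $\Omega_{T}^{e}$, we have $u=f\ge 0$ outside $\Omega$, so $u_{-}(t,\cdot)$ is supported in $\overline{\Omega}$ for a.e.\ $t$. Because $0<s<1$, truncation is continuous on $H^{s}(\mathbb{R}^{n})$, and hence $u_{-}\in L^{2}(0,T;\tilde{H}^{s}(\Omega))$ is an admissible test function.

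Testing the weak formulation of \eqref{eq:linear-diffusion1} pointwise in $t$ against $\phi=u_{-}(t,\cdot)$ yields
\begin{equation*}
\langle\partial_{t}u,u_{-}\rangle+\int_{\mathbb{R}^{n}}(-\Delta)^{s/2}u\,(-\Delta)^{s/2}u_{-}\,\mathsf{d}x+\int_{\Omega}a\,u\,u_{-}\,\mathsf{d}x=\int_{\Omega}F\,u_{-}\,\mathsf{d}x.
\end{equation*}
The chain rule for functions in $H^{1}(0,T;L^{2}(\Omega))$ gives $\langle\partial_{t}u,u_{-}\rangle=-\tfrac{1}{2}\tfrac{d}{dt}\|u_{-}\|_{L^{2}(\Omega)}^{2}$, while on the set $\{u_{-}>0\}$ we have $u=-u_{-}$, so $\int_{\Omega}a\,u\,u_{-}=-\int_{\Omega}a\,u_{-}^{2}$.

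The crux is the nonlocal quadratic form. Rewriting it via the Gagliardo representation \eqref{eq:Gagliardo-seminorm}, a simple case analysis on the signs of $u(x)$ and $u(y)$ shows that
\begin{equation*}
\bigl(u(x)-u(y)\bigr)\bigl(u_{-}(x)-u_{-}(y)\bigr)\;\le\;-\bigl(u_{-}(x)-u_{-}(y)\bigr)^{2}
\end{equation*}
pointwise, so the nonlocal term is bounded above by a negative multiple of $[u_{-}]_{\dot{H}^{s}(\mathbb{R}^{n})}^{2}$ and in particular is $\le 0$. Combining with $F\ge 0$ and $u_{-}\ge 0$, we obtain
\begin{equation*}
\tfrac{1}{2}\tfrac{d}{dt}\|u_{-}\|_{L^{2}(\Omega)}^{2}\;\le\;\|a\|_{L^{\infty}(\Omega_{T})}\,\|u_{-}\|_{L^{2}(\Omega)}^{2}.
\end{equation*}
Since $\varphi\ge 0$ forces $u_{-}(0,\cdot)=0$, Gr\"onwall's inequality yields $u_{-}\equiv 0$, hence $u\ge 0$ in $\Omega_{T}$.

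The principal obstacle is establishing the pointwise inequality for the Gagliardo kernel; this is a classical computation but must be tracked carefully across the four sign-combinations of $(u(x),u(y))$. A secondary concern is justifying the chain rule in time and the admissibility of the truncation as a test function under the regularity $u\in L^{2}(0,T;H^{s}(\mathbb{R}^{n}))\cap H^{1}(0,T;L^{2}(\Omega))$; both follow from standard density arguments together with the fact that $v\mapsto v_{-}$ is a contraction on $L^{2}(\Omega)$ and bounded on $H^{s}(\mathbb{R}^{n})$ for $0<s<1$.
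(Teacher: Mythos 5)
Your proof is correct and follows essentially the same strategy as the paper: test the weak formulation against the negative part of the solution (which lies in $\tilde H^s(\Omega)$ because the exterior data is nonnegative), use the sign property of the nonlocal quadratic form on the pair $(u,u_-)$, and close with an energy argument. The only difference is in how the zeroth-order term $a$ is absorbed: the paper first replaces $u$ by $u_M:=e^{-Mt}u$ with $M=\|a\|_{L^\infty}$, which makes $a_M=a+M\ge0$ and lets it conclude $\tfrac{d}{dt}\|u_M^-\|_{L^2}^2\le0$ with no further estimate, whereas you keep $a$ as is and invoke Gr\"onwall; these are interchangeable standard devices and yield the same conclusion. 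Your pointwise Gagliardo-kernel inequality is also slightly sharper than needed (it gives $-[u_-]_{\dot H^s}^2$ rather than merely $\le 0$, which is all the paper's cited lemmas provide), but that surplus does no harm.
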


\begin{proof}
Let $M$ be a real number which shall be determined later. We define
\begin{equation}
\begin{aligned}
&u_{M}(t,x):=e^{-Mt}u(t,x), \;
a_{M}(t,x):=a(t,x)+M, \;
F_{M}(t,x):=e^{-Mt}F(t,x) \;  \text{in}\;\;\Omega_{T},\\
&f_{M}(t,x):=e^{-Mt}f(t,x) \;\; \text{in}\;\;\Omega_{T}^{e}.
\end{aligned}
\label{eq:reduction-M}
\end{equation}
We see that $u_M$ satisfies
\begin{equation}
\begin{cases}
(\partial_{t}+(-\Delta)^{s}+a_{M})u_{M}=F_{M} & \text{in}\;\;\Omega_{T},\\
u_{M}=f_{M} & \text{in}\;\;\Omega_{T}^{e},\\
u_{M}=\varphi & \text{on}\;\;\{0\}\times\mathbb{R}^{n}.
\end{cases}\label{eq:linear-diffusion4-reduction}
\end{equation}
We choose $M=\|a\|_{L^{\infty}(\Omega_{T})}$, then $a_{M}\ge0$ in $\Omega_{T}$. Next we write $u_{M}=u_{M}^{+}-u_{M}^{-}$, where $u_{M}^{+}=\max\{u_{M},0\}$
and $u_{M}^{-}=\max\{-u_{M},0\}$. Since $u_{M}\in L^{2}(0,T;H^{s}(\mathbb{R}^{n}))\cap H^{1}(0,T;L^{2}(\Omega))$,
then $u_{M}^{\pm}\in L^{2}(0,T;H^{s}(\mathbb{R}^{n})) \cap H^{1}(0,T;L^{2}(\Omega))$ and that
\[
\partial_{t} (u_{M}^{-}) = \begin{cases}
-\partial_{t} u_{M} & \text{in } \{u_{M} < 0 \}, \\
0 & \text{in } \{u_{M} \ge 0 \}.
\end{cases}
\]
Since $u_{M}=f_{M}\ge0$
in $\Omega_{T}^{e}$, hence $u_{M}^{-}=0$ in $\Omega_{T}^{e}$, which
implies $
u_{M}^{-}\in L^{2}(0,T;\tilde{H}^{s}(\Omega)) \cap H^{1}(0,T;L^{2}(\Omega)).$
Testing the first equation of \eqref{eq:linear-diffusion4-reduction} by $u_{M}^{-}$,
we have 
\begin{align*}
0 & \le(\bm{F}_{M}(t),\bm{u}_{M}^{-}(t))_{L^{2}(\Omega)}\quad\text{(because }F_{M}\ge0\text{ and }u_{M}^{-}\ge0\text{ in }\Omega_{T})\nonumber \\
 & =\int_{\Omega}(\partial_{t}\bm{u}_{M}(t))\bm{u}_{M}^{-}(t)\,\mathsf{d}x+\int_{\mathbb{R}^{n}}(-\Delta)^{\frac{s}{2}}\bm{u}_{M}(t)(-\Delta)^{\frac{s}{2}}\bm{u}_{M}^{-}(t)\,\mathsf{d}x+\int_{\Omega}a_M(t,\cdot)\bm{u}_{M}\bm{u}_{M}^{-}\,\mathsf{d}x\nonumber \\
 & =-\frac{\mathsf{d}}{\mathsf{d}t}\big(\frac{1}{2}\int_{\Omega}|\bm{u}_{M}^{-}(t)|^{2}\,\mathsf{d}x\big)+\int_{\mathbb{R}^{n}}(-\Delta)^{\frac{s}{2}}\bm{u}_{M}(t)(-\Delta)^{\frac{s}{2}}\bm{u}_{M}^{-}(t)\,\mathsf{d}x-\int_{\Omega}a_M(t,\cdot)|\bm{u}_{M}^{-}|^{2}\,\mathsf{d}x \label{eq:maximum1}
\end{align*}
for all $0<t<T$. In \cite[Proposition~3.1]{LL19GlobalUniqueness}
or \cite[Proposition~4.1]{Ros16FractionalLaplacianSurvey}, they
showed that 
\begin{equation*}
\int_{\mathbb{R}^{n}}(-\Delta)^{\frac{s}{2}}\bm{u}_{M}(t)(-\Delta)^{\frac{s}{2}}\bm{u}_{M}^{-}(t)\,\mathsf{d}x\le0\quad\text{for all }0<t<T.\label{eq:maximum2}
\end{equation*}
Combining 
the preceding two inequalities,
we then conclude
$\frac{\mathsf{d}}{\mathsf{d}t}\big(\int_{\Omega}|\bm{u}_{M}^{-}(t)|^{2}\,\mathsf{d}x\big)
\leq 0$ holds true for all $0<t<T$.
Since $u_{M}^{-}=0$ on $\mathbb{R}^{n}\times\{0\}$ (because $\varphi\ge0$
in $\mathbb{R}^{n}$), then we conclude 
$
\int_{\Omega}|\bm{u}_{M}^{-}(t)|^{2}\,\mathsf{d}x=0\quad\text{for all }0<t<T,
$
which completes our proof. 
\end{proof}
\begin{cor}
[Comparison principle] \label{cor:comparison}Given any $n\in\mathbb{N}$
and $0<s<1$. Let $\Omega\subset\mathbb{R}^{n}$ be a bounded Lipschitz
domain in $\mathbb{R}^{n}$, and let $a\in L^{\infty}(\Omega_{T})$.
Let $u_{1}$ and $u_{2}$ be weak solutions of 
\[
\begin{cases}
(\partial_{t}+(-\Delta)^{s}+a)u_{j}=F_{j} & \text{in}\;\;\Omega_{T},\\
u_{j}=f_{j} & \text{in}\;\;\Omega_{T}^{e},\\
u_{j}=\varphi_{j} & \text{on}\;\;\{0\}\times\mathbb{R}^{n},
\end{cases}
\]
for $j=1,2$. If 
$\,
F_{1}\ge F_{2}\text{ in }\Omega_{T},\quad f_{1}\ge f_{2}\text{ in }\Omega_{T}^{e},\quad\varphi_{1}\ge\varphi_{2}\text{ in }\mathbb{R}^{n},
$
then $u_{1}\ge u_{2}$ in $\Omega_{T}$. 
\end{cor}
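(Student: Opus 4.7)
The plan is to reduce the statement to Proposition~\ref{prop:maximum-principle} by exploiting the linearity of the operator $\partial_{t} + (-\Delta)^{s} + a$. Set $w := u_{1} - u_{2}$. Subtracting the two equations satisfied by $u_{1}$ and $u_{2}$, the function $w$ is a weak solution of
\begin{equation*}
\begin{cases}
(\partial_{t} + (-\Delta)^{s} + a) w = F_{1} - F_{2} & \text{in}\;\;\Omega_{T},\\
w = f_{1} - f_{2} & \text{in}\;\;\Omega_{T}^{e},\\
w = \varphi_{1} - \varphi_{2} & \text{on}\;\;\{0\}\times\mathbb{R}^{n}.
\end{cases}
\end{equation*}
By the hypotheses, the right-hand side, the exterior datum, and the initial datum are all nonnegative, so Proposition~\ref{prop:maximum-principle} applied to $w$ yields $w \geq 0$ in $\Omega_{T}$, which is exactly the desired inequality $u_{1} \geq u_{2}$ in $\Omega_{T}$.

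The only point that requires care is checking that $w$ sits in the regularity class demanded by Proposition~\ref{prop:maximum-principle}, namely $L^{2}(0,T;H^{s}(\mathbb{R}^{n})) \cap H^{1}(0,T;L^{2}(\Omega))$. Since this space is a vector space, it is enough to check that each $u_{j}$ lies in it. Under the assumption $\varphi_{j}\in\tilde{H}^{s}(\Omega)$ and $f_{j}\in\mathcal{C}_{c}^{\infty}(W_{T})$, estimate \eqref{eq:diffusion-regularity} in Corollary~\ref{cor:well-posed-linear} gives exactly this regularity (the $L^{\infty}(0,T;H^{s}(\mathbb{R}^{n}))$-bound on $u_{j}-f_{j}$ combined with $\partial_{t}u_{j}\in L^{2}(\Omega_{T})$ and the smoothness of $f_{j}$).

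If only the weaker $\varphi_{j} \in \tilde{H}^{0}(\Omega)$ is available, one proceeds by a density argument: approximate $\varphi_{j}$ in $L^{2}(\Omega)$ by a sequence $\varphi_{j}^{(k)} \in \tilde{H}^{s}(\Omega)$ preserving the sign condition $\varphi_{1}^{(k)}\geq\varphi_{2}^{(k)}$ (e.g.\ by truncation/regularisation), apply the comparison conclusion to the associated solutions $u_{j}^{(k)}$ which enjoy the better regularity, and then pass to the limit using the $L^{\infty}(0,T;L^{2}(\Omega))$-stability provided by \eqref{eq:well-posedness-estimate1a}. I do not anticipate a genuine obstacle here; the entire argument is essentially ``apply Proposition~\ref{prop:maximum-principle} to the difference,'' with the density reduction being the only mildly nontrivial bookkeeping.
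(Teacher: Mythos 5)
Your proof is correct and takes exactly the same route as the paper: apply the maximum principle (Proposition~\ref{prop:maximum-principle}) to the difference $w = u_1 - u_2$, noting that the data $F_1 - F_2$, $f_1 - f_2$, $\varphi_1 - \varphi_2$ are all nonnegative by hypothesis. Your extra discussion about the regularity class and the density reduction is careful bookkeeping that the paper omits, but the core argument is identical.
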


\begin{proof}
By applying
Proposition~\ref{prop:maximum-principle} with $u=u_{1}-u_{2}$, this can be proved immediately. 
\end{proof}

\begin{rem}
Proposition~\ref{prop:maximum-principle} as well as Corollary~\ref{cor:comparison}
also imply the uniqueness part of Proposition~\ref{prop:well-posed-linear}
and Corollary~\ref{cor:well-posed-linear}. 
\end{rem}

\subsection{\texorpdfstring{$L^{\infty}$}{L\^infty}-bounds of solutions of the linear equation}

For our purposes, we require the following $L^{\infty}$-bound estimate, which can be found in \cite[Proposition~3.3]{Li22GlobalUniquenessParabolicSemilinear}:

\begin{prop} \label{prop:sup-bound}
	Given any $n\in\mathbb{N}$ and $0<s<1$. Let
	$\Omega\subset\mathbb{R}^{n}$ be a bounded Lipschitz domain in $\mathbb{R}^{n}$,
	and let $a\in L^{\infty}(\Omega_{T})$. Suppose that $u\in L^{2}(0,T;H^{s}(\mathbb{R}^{n})) \cap H^{1}(0,T;L^{2}(\Omega))$ is a weak solution of 
	\[
	\begin{cases}
	(\partial_{t}+(-\Delta)^{s}+a)u=F & \text{in}\;\;\Omega_{T},\\
	u=f & \text{in}\;\;\Omega_{T}^{e},\\
	u=0 & \text{on}\;\;\{0\}\times\mathbb{R}^{n},
	\end{cases}
	\]
	with $F\in L^{\infty}(\Omega_{T})$ and $f\in L^{\infty}(\Omega_{T}^{e})$. Then 
$
	\|u\|_{L^{\infty}(\Omega_{T})}
	\le C ( \|f\|_{L^{\infty}(\Omega_{T}^{e})}+\|F\|_{L^{\infty}(\Omega_{T})} ),
	$
	for some constant $C=C(n,s,T,\Omega,\|a\|_{L^{\infty}(\Omega_{T})})$.
\end{prop}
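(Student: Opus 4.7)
My plan is to prove Proposition \ref{prop:sup-bound} via a De Giorgi-type truncation argument that is essentially a quantitative extension of the maximum principle proof in Proposition \ref{prop:maximum-principle}. The guiding heuristic is that the constant function $\bar{u} \equiv K := \|f\|_{L^{\infty}(\Omega_{T}^{e})} + \|F\|_{L^{\infty}(\Omega_{T})}$ ought to serve as a supersolution after an exponential gauge transformation, since $(-\Delta)^{s}$ annihilates constants. However, a nonzero constant on $\mathbb{R}^{n}$ does not lie in $L^{2}(\mathbb{R}^{n})$ and hence is not an admissible weak solution in the sense of Definition \ref{def:weak-solution}, so one must argue via truncation rather than by direct comparison.

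Concretely, I would first set $\lambda := \|a\|_{L^{\infty}(\Omega_{T})} + 1$ and define $u_{M}(t,x) := e^{-\lambda t} u(t,x)$, which by the same computation as in \eqref{eq:reduction-M} satisfies $(\partial_{t} + (-\Delta)^{s} + a_{M}) u_{M} = e^{-\lambda t} F$ in $\Omega_{T}$ with $a_{M} := a + \lambda \geq 1$. With $K$ as above, the function $w(t,\cdot) := (u_{M}(t,\cdot) - K)_{+}$ vanishes in $\Omega_{T}^{e}$ (since $|u_{M}| = |e^{-\lambda t}f| \leq \|f\|_{L^{\infty}(\Omega_{T}^{e})} \leq K$ there) and thus belongs to $L^{2}(0,T;\tilde{H}^{s}(\Omega))$, making it an admissible test function. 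Testing the equation for $u_{M}$ against $w$, the time-derivative term yields $\tfrac{1}{2}\tfrac{\mathsf{d}}{\mathsf{d}t}\|w\|_{L^{2}(\Omega)}^{2}$, while the fractional-Laplacian term is bounded below by $\|(-\Delta)^{s/2}w\|_{L^{2}(\mathbb{R}^{n})}^{2}$ via the same Stroock-Varopoulos-type inequality already invoked in Proposition \ref{prop:maximum-principle} (used through the Gagliardo seminorm, noting that the differences $u_{M}(x) - u_{M}(y)$ are unaffected by subtracting the constant $K$). Writing $\int_{\Omega} a_{M} u_{M} w\,\mathsf{d}x = \int_{\Omega} a_{M} w^{2}\,\mathsf{d}x + K \int_{\Omega} a_{M} w\,\mathsf{d}x$ and bounding the right-hand side by $\int_{\Omega} e^{-\lambda t} F w\,\mathsf{d}x \leq K \int_{\Omega} a_{M} w\,\mathsf{d}x$ (using $|F| \leq K$ and $a_{M} \geq 1$), the $K \int_{\Omega} a_{M} w\,\mathsf{d}x$ contributions cancel and there remains
\begin{equation*}
	\frac{1}{2} \frac{\mathsf{d}}{\mathsf{d}t}\|w\|_{L^{2}(\Omega)}^{2} + \|(-\Delta)^{s/2}w\|_{L^{2}(\mathbb{R}^{n})}^{2} + \int_{\Omega} a_{M} w^{2}\,\mathsf{d}x \leq 0.
\end{equation*}
Combined with $w(0,\cdot) = 0$, this forces $w \equiv 0$, i.e.\ $u_{M} \leq K$ in $\Omega_{T}$. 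Applying the same argument to $-u_{M}$ gives $|u_{M}| \leq K$, and undoing the gauge yields $\|u\|_{L^{\infty}(\Omega_{T})} \leq K e^{\lambda T}$, which is the claimed bound.

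The main obstacle I anticipate is the rigorous handling of the truncation at a constant $K \notin H^{s}(\mathbb{R}^{n})$. This is resolved by interpreting every Dirichlet-form expression through the Gagliardo seminorm, whose integrand depends only on differences and is therefore insensitive to the shift by $K$; one then verifies that the integrand $((u_{M}-K)_{-}(x)-(u_{M}-K)_{-}(y))\cdot(w(x)-w(y))$ is pointwise nonpositive, which is exactly the ingredient used in the proof of Proposition \ref{prop:maximum-principle}. Beyond this technicality, the argument is a single-scale energy estimate and needs no iterative Moser or De Giorgi scheme, because the $L^{\infty}$ integrability of $F$ and $f$ yields the desired bound at one truncation level.
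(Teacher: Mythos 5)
Your proof is correct but takes a genuinely different route from the paper's. The paper constructs an explicit smooth parabolic barrier $\Phi$ from the fractional elliptic barrier of \cite[Lemma~5.4]{Ros16FractionalLaplacianSurvey} (Corollary~\ref{cor:barrier}), forms the supersolution $v := \|f_{M}\|_{L^{\infty}(\Omega_{T}^{e})} + \|F_{M}\|_{L^{\infty}(\Omega_{T})}\Phi$, and invokes the comparison principle (Proposition~\ref{prop:maximum-principle}) to deduce $\pm u_{M}\le v$; the constant $C$ in the final estimate comes from $\sup_{\Omega_{T}}\Phi$. You instead run a single-level Stampacchia truncation: with the stronger gauge $\lambda=\|a\|_{L^{\infty}}+1$ ensuring $a_{M}\ge1$, you test against $w=(u_{M}-K)_{+}$, exploit that the Gagliardo seminorm and hence the bilinear form are insensitive to the additive constant $K$, use the Stroock--Varopoulos-type sign inequality already present in the proof of Proposition~\ref{prop:maximum-principle} to bound the Dirichlet term below by $\|(-\Delta)^{s/2}w\|_{L^{2}}^{2}\ge0$, and observe the key cancellation $\int a_{M}u_{M}w - \int F_{M}w \ge \int a_{M}w^{2} + K\int a_{M}w - K\int a_{M}w = \int a_{M}w^{2}\ge0$, whence $\tfrac{\mathsf{d}}{\mathsf{d}t}\|w\|_{L^{2}(\Omega)}^{2}\le0$ and $w\equiv0$. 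Your argument is self-contained and dispenses entirely with the barrier construction; the price is that the needed cancellation in the potential/forcing terms requires the slightly stronger normalization $a_{M}\ge1$ (the paper only uses $a_{M}\ge0$). Both yield the same quality of estimate, with the exponential factor $e^{\lambda T}$ (resp.\ $e^{MT}$) from the gauge absorbed into the constant. Two points worth making explicit if you write this up fully: (i) $(u_{M}-K)_{+}$ belongs to $\tilde{H}^{s}(\Omega)$ because it vanishes outside $\overline{\Omega}$ and the truncation is a Lipschitz operation that preserves $H^{s}$ regularity locally, even though $u_{M}-K\notin L^{2}(\mathbb{R}^{n})$; (ii) the identity $(u_{M}-K)w=w^{2}$ uses $(u_{M}-K)_{-}(u_{M}-K)_{+}\equiv0$ pointwise.
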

To make our paper more self-contained, here we sketch the proof of Proposition~\ref{prop:sup-bound}.
The following lemma can be found in \cite[Lemma~3.4]{LL19GlobalUniqueness}
(with $a\equiv0$) or \cite[Lemma~5.1]{Ros16FractionalLaplacianSurvey}. 
\begin{lem}
[Elliptic barrier]Given any $n\in\mathbb{N}$ and $0<s<1$. Let $\Omega$
be a bounded Lipschitz domain in $\mathbb{R}^{n}$. There exists a
function $\phi=\phi(x)\in\mathcal{C}_{c}^{\infty}(\mathbb{R}^{n})$
such that 
\[
(-\Delta)^{s}\phi\ge1 \quad  \text{in}\;\;\Omega, \quad
\phi\ge0 \quad \text{in}\;\;\mathbb{R}^{n},\quad
\phi\le C \quad \text{in}\;\;\Omega,\quad \mbox{for some constant $C=C(n,s,\Omega)$}.
\]
\end{lem}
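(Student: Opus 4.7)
The plan is to produce $\phi$ as a suitably rescaled smooth cutoff that is constant on a large neighborhood of $\Omega$. First, fix $R>0$ large enough that $\overline{\Omega}\subset B_{R}$, and choose $\eta\in\mathcal{C}_{c}^{\infty}(\mathbb{R}^{n})$ with $0\le\eta\le 1$, $\eta\equiv 1$ on $\overline{B_{R}}$, and $\operatorname{supp}(\eta)\subset B_{2R}$. Such an $\eta$ is standard (mollify the indicator of a slightly smaller set), and it already delivers the required boundedness and nonnegativity: $\eta\ge 0$ everywhere and $\eta\le 1$ on $\Omega$.

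The main step is to show $(-\Delta)^{s}\eta\ge c_{0}>0$ pointwise on $\Omega$ for a constant $c_{0}=c_{0}(n,s,\Omega)$. For this I would use the singular integral representation
\[
(-\Delta)^{s}\eta(x) \,=\, C_{n,s}\,\mathrm{p.v.}\!\int_{\mathbb{R}^{n}}\frac{\eta(x)-\eta(y)}{|x-y|^{n+2s}}\,\mathsf{d}y, \qquad x\in\Omega,
\]
where $C_{n,s}>0$. Since $\eta(x)=1$ attains the global maximum on $\overline{B_{R}}\supset\overline{\Omega}$, the integrand $\eta(x)-\eta(y)\ge 0$ everywhere, so the principal-value integral reduces to an honest nonnegative integral. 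To get a strictly positive lower bound, I discard everything except an annular shell $A:=B_{3R}\setminus\overline{B_{2R}}$, on which $\eta\equiv 0$ and $|x-y|\le 4R$ whenever $x\in\Omega$. This yields
\[
(-\Delta)^{s}\eta(x) \,\ge\, C_{n,s}\int_{A}\frac{1}{|x-y|^{n+2s}}\,\mathsf{d}y \,\ge\, \frac{C_{n,s}\,|A|}{(4R)^{n+2s}} \,=:\, c_{0},
\]
with $c_{0}$ depending only on $n$, $s$, and (through $R$) on $\Omega$.

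Finally, set $\phi:=\eta/c_{0}$. Then $\phi\in\mathcal{C}_{c}^{\infty}(\mathbb{R}^{n})$, $\phi\ge 0$ on $\mathbb{R}^{n}$, $\phi\le 1/c_{0}=:C$ on $\Omega$, and $(-\Delta)^{s}\phi\ge 1$ on $\Omega$, which is exactly the statement.

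There is no serious obstacle: the only moment requiring care is checking that the principal value actually collapses to an ordinary integral, which is immediate from the fact that $\eta$ achieves its global maximum on the plateau containing $\Omega$, so the integrand has a definite sign and the singularity at $y=x$ is tame ($\eta$ is smooth, hence $\eta(x)-\eta(y)=O(|x-y|^{2})$ near $x$). An alternative route would be to start from Getoor's explicit formula $(-\Delta)^{s}[(1-|x|^{2})_{+}^{s}]=\kappa_{n,s}$ on $B_{1}$, rescale to dominate $\Omega$, and then mollify to gain $\mathcal{C}^{\infty}$ regularity; but the cutoff construction above avoids the loss that mollification would introduce and is self-contained.
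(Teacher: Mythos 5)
Your proof is correct. The paper does not reproduce an argument here---it simply cites \cite[Lemma~3.4]{LL19GlobalUniqueness} and \cite[Lemma~5.1]{Ros16FractionalLaplacianSurvey}---and your construction (a smooth cutoff equal to its global maximum on a neighborhood of $\overline{\Omega}$, a strictly positive lower bound on $(-\Delta)^s\eta$ obtained by restricting the singular integral to a fixed annular shell where $\eta\equiv 0$, then normalizing) is precisely the standard argument given in those references.
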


If we define $\Phi(t,x):=e^{t}\phi(x)$, we immediately obtain the
following corollary: 
\begin{cor}
[Parabolic barrier] \label{cor:barrier}Given any $n\in\mathbb{N}$
and $0<s<1$. Let $\Omega$ be a bounded Lipschitz domain in $\mathbb{R}^{n}$.
There exists a function $\Phi\in\mathcal{C}_{c}^{\infty}([0,T]\times\mathbb{R}^{n})$
such that 
\[
(\partial_{t}+(-\Delta)^{s})\Phi\ge1 \quad \text{in}\;\;\Omega_{T}\quad
\Phi\ge0  \quad \text{in}\;\;[0,T)\times\mathbb{R}^{n},\quad
\Phi\le C \quad \text{in}\;\;\Omega_{T},
\]
for some constant $C=C(n,s,T,\Omega)$. 
\end{cor}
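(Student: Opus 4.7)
The corollary follows almost immediately from the elliptic barrier stated just above, and the author has essentially indicated the construction: take
\[
\Phi(t,x) := e^{t}\phi(x),
\]
where $\phi \in \mathcal{C}_{c}^{\infty}(\mathbb{R}^{n})$ is the function provided by the elliptic barrier lemma. So my plan is simply to verify the four required properties for this choice.

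First, I would observe that $\Phi \in \mathcal{C}^{\infty}([0,T] \times \mathbb{R}^{n})$ as a product of two smooth factors, and since $\phi$ has compact support in $\mathbb{R}^{n}$ and $[0,T]$ is already compact, the support of $\Phi$ in $[0,T] \times \mathbb{R}^{n}$ is contained in $[0,T] \times \mathrm{supp}(\phi)$, which is compact. Hence $\Phi \in \mathcal{C}_{c}^{\infty}([0,T] \times \mathbb{R}^{n})$.

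Next, since the fractional Laplacian acts only in the spatial variable and commutes with multiplication by functions of $t$ alone, I would compute
\[
(\partial_{t} + (-\Delta)^{s}) \Phi(t,x) = e^{t} \phi(x) + e^{t} (-\Delta)^{s}\phi(x) \geq e^{t}(-\Delta)^{s}\phi(x) \geq e^{t} \geq 1
\]
in $\Omega_{T}$, using $\phi \geq 0$ everywhere and $(-\Delta)^{s}\phi \geq 1$ in $\Omega$ from the elliptic barrier, together with $t \geq 0$. The nonnegativity $\Phi \geq 0$ on $[0,T) \times \mathbb{R}^{n}$ is immediate from $e^{t} > 0$ and $\phi \geq 0$. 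Finally, the upper bound follows from
\[
\Phi(t,x) = e^{t} \phi(x) \leq e^{T} \cdot C(n,s,\Omega) =: C(n,s,T,\Omega) \quad \text{in } \Omega_{T},
\]
using the elliptic bound $\phi \leq C(n,s,\Omega)$ on $\Omega$.

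There is no genuine obstacle here; the only thing worth double-checking is that the spatial fractional Laplacian indeed factors through the scalar multiplier $e^{t}$, which is obvious from the Fourier definition (the symbol $|\xi|^{2s}$ does not see $t$). Thus the corollary reduces to a one-line separation-of-variables argument built on the previously established elliptic barrier.
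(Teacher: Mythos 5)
Your proof is correct and follows exactly the construction the paper indicates (the paper simply states ``If we define $\Phi(t,x):=e^{t}\phi(x)$, we immediately obtain the following corollary'' and leaves the verification to the reader). Your step-by-step verification of the three bounds and the compact support is a straightforward unpacking of that same separation-of-variables ansatz, so there is no substantive difference.
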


Using the barrier in Corollary~\ref{cor:barrier}, we now can obtain the following $L^{\infty}$-bound for the solution of \eqref{eq:linear-diffusion1}. 

\begin{proof}[Proof of Proposition~{\rm \ref{prop:sup-bound}}]
Using the functions given in \eqref{eq:reduction-M} with $M=\|a\|_{L^{\infty}(\Omega_{T})}$,
we know that 
\[
\begin{cases}
(\partial_{t}+(-\Delta)^{s}+a_{M})u_{M}=F_{M} & \text{in}\;\;\Omega_{T},\\
u_{M}=f_{M} & \text{in}\;\;\Omega_{T}^{e},\\
u_{M}=0 & \text{on}\;\;\{0\}\times\mathbb{R}^{n},
\end{cases}
\]
with $a_{M}\ge0$. Let 
$v(t,x):=\|f_{M}\|_{L^{\infty}(\Omega_{T}^{e})}+\|F_{M}\|_{L^{\infty}(\Omega_{T})}\Phi(t,x)\ge0\,\,\text{in}\;\;[0,T)\times\Omega,$
where $\Phi$ is the barrier given in Corollary~\ref{cor:barrier}.
We see that 
\begin{align*}
 (\partial_{t}+(-\Delta)^{s}+a_{M})v
 & \ge(\partial_{t}+(-\Delta)^{s})v
 =\|F_{M}\|_{L^{\infty}(\Omega_{T})}(\partial_{t}+(-\Delta)^{s})\Phi
 \ge \|F_{M}\|_{L^{\infty}(\Omega_{T})} \\
 & \ge \mp F_{M}= \mp (\partial_{t}+(-\Delta)^{s}+a_{M})u_{M} \quad \text{in} \quad \Omega_{T}.
\end{align*}
Moreover, we also have 
\begin{equation}
\begin{aligned}\label{eq:compare1}
&(\partial_{t}+(-\Delta)^{s}+a_{M})(v \pm u_{M})\ge0\quad\qquad\qquad\hspace{2mm}\text{in}\;\;\Omega_{T}\\
&v \pm u_{M}=v \pm f_{M}\ge\|f_{M}\|_{L^{\infty}(\Omega_{T}^{e})} \pm f_{M}\ge0\quad  \text{in}\;\;\Omega_{T}^{e}\\
&v \pm u_{M}=v\ge0\quad\quad \qquad \hspace{34mm}\text{on}\;\;\{0\}\times\mathbb{R}^{n}.
\end{aligned}
\end{equation}
Combining relations in \eqref{eq:compare1}, 
and Proposition~\ref{prop:maximum-principle}, we see that $
v\ge \pm u_{M}\quad\text{in}\;\;\Omega_{T},$
which further implies that 
$
\|u_{M}\|_{L^{\infty}(\Omega_{T})}\le\|v\|_{L^{\infty}(\Omega_{T})}\le\|f_{M}\|_{L^{\infty}(\Omega_{T}^{e})}+C\|F_{M}\|_{L^{\infty}(\Omega_{T})},$
where $C=C(n,s,T,\Omega)$ is the constant given in the  Corollary~\ref{cor:barrier}.
Finally, utilizing
\begin{align*}
|u(t,x)| & =e^{Mt}|u_{M}(t,x)|\le e^{T\|a\|_{L^{\infty}(\Omega_{T})}}\|u_{M}\|_{L^{\infty}(\Omega_{T})}\quad\text{in}\;\;\Omega_{T},\\
|F_{M}(t,x)| & =e^{-Mt}|F(t,x)|\le\|F\|_{L^{\infty}(\Omega_{T})}\quad\quad\hspace{18mm}\text{in}\;\;\Omega_{T},\\
|f_{M}(t,x)| & =e^{-Mt}|f(t,x)|\le\|f\|_{L^{\infty}(\Omega_{T}^{e})}\quad\quad\hspace{20mm}\text{in}\;\;\Omega_{T}^{e},
\end{align*}
we conclude the proof. 
\end{proof}

We skip the proof of the following well-posedness result  as it follows from combining Corollary~\ref{cor:well-posed-linear} and Proposition~\ref{prop:sup-bound}.
 
\begin{prop} \label{prop:well-posed-suitable-regularity}
Given any $n\in\mathbb{N}$
and $0<s<1$. Let $\Omega\subset\mathbb{R}^{n}$ be a bounded Lipschitz
domain in $\mathbb{R}^{n}$, let $W\subset\Omega^{e}$ be any
open set with Lipschitz boundary satisfying $\overline{W}\cap\overline{\Omega}=\emptyset$.
Then for any $\tilde{F}\in L^{\infty}(\Omega_{T})$ and $f\in\mathcal{C}_{c}^{\infty}(W_{T})$,
there exists a unique weak solution $u$ of 
\[
\begin{cases}
(\partial_{t}+(-\Delta)^{s}+a)u=F & \text{in}\;\;\Omega_{T},\\
u=f & \text{in}\;\;\Omega_{T}^{e},\\
u=0 & \text{in}\;\;\{0\}\times\mathbb{R}^{n},
\end{cases}
\]
satisfying 
\begin{align*}
 &\|u\|_{L^{\infty}(0,T;H^{s}(\mathbb{R}^{n}))\cap L^{\infty}(\mathbb{R}_{T}^{n})}^{2}+\|\partial_{t}u\|_{L^{2}(\Omega_{T})}^{2}\\
 & \quad\le  C\big(\|F\|_{L^{\infty}(\Omega_{T})}^{2}+\|f\|_{L^{\infty}(0,T;H^{s}(\mathbb{R}^{n}))\cap L^{\infty}(\mathbb{R}_{T}^{n})}^{2}+\|(-\Delta)^{s}f\|_{L^{2}(\Omega_{T})}^{2}\big)
\end{align*}
for some constant $C=C(n,s,T,\|a\|_{L^{\infty}(\Omega_{T})},\Omega)$.
\end{prop}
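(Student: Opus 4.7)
The plan is to combine the two preceding results essentially as a packaging step: Corollary~\ref{cor:well-posed-linear} gives existence, uniqueness and the $L^{\infty}$-in-time $H^{s}$-in-space energy bound together with $\partial_{t}u\in L^{2}(\Omega_{T})$, while Proposition~\ref{prop:sup-bound} upgrades that solution to an $L^{\infty}$ pointwise bound. The entire argument reduces to checking that the hypotheses of each result are satisfied once we take initial data $\varphi=0$, and then applying the triangle inequality to the decomposition $u=(u-f)+f$.

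Concretely, the first step is to take $\varphi=0\in\tilde{H}^{s}(\Omega)$. Since $F\in L^{\infty}(\Omega_{T})\subset L^{2}(\Omega_{T})$ (because $\Omega_{T}$ has finite measure) and $(-\Delta)^{s}f\in L^{2}(\Omega_{T})$ is part of the hypothesis via the control of $\|f\|_{\mathrm{ext}}$, the inhomogeneity $F-(-\Delta)^{s}f$ lies in $L^{2}(\Omega_{T})$. Hence Corollary~\ref{cor:well-posed-linear} applies and delivers a unique weak solution $u$ satisfying the bound \eqref{eq:diffusion-regularity}. To convert the $\|u-f\|_{L^{\infty}(0,T;H^{s}(\mathbb{R}^{n}))}$-term into the desired $\|u\|_{L^{\infty}(0,T;H^{s}(\mathbb{R}^{n}))}$-bound I would simply add $\|f\|_{L^{\infty}(0,T;H^{s}(\mathbb{R}^{n}))}$ to both sides via the triangle inequality, while the $L^{2}(\Omega_{T})$-norm of $F$ is absorbed by $|\Omega_{T}|^{1/2}\|F\|_{L^{\infty}(\Omega_{T})}$.

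For the second step, the solution just constructed lies in $L^{2}(0,T;H^{s}(\mathbb{R}^{n}))\cap H^{1}(0,T;L^{2}(\Omega))$, so the regularity hypothesis of Proposition~\ref{prop:sup-bound} is met. Since $F\in L^{\infty}(\Omega_{T})$ and $f\in L^{\infty}(\Omega_{T}^{e})$ (automatic from $f\in\mathcal{C}_{c}^{\infty}(W_{T})$), that proposition yields
\[
\|u\|_{L^{\infty}(\Omega_{T})}\le C\bigl(\|f\|_{L^{\infty}(\Omega_{T}^{e})}+\|F\|_{L^{\infty}(\Omega_{T})}\bigr).
\]
Combining with $u=f$ on $\Omega_{T}^{e}$ controls $\|u\|_{L^{\infty}(\mathbb{R}_{T}^{n})}$ by the same two norms, and putting the three pieces together gives the full inequality claimed.

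No serious technical obstacle arises: the real work — the $L^{2}$ energy estimate (via a Galerkin/Lax–Milgram construction in Appendix~\ref{sec:well-posed-linear}) and the parabolic maximum-principle-based sup-norm bound (via the barrier in Corollary~\ref{cor:barrier}) — has already been done. The present proposition is purely an assembly statement, packaged into the precise form that will be needed when linearizing the semilinear equation~\eqref{eq:nonlinear-diffusion-main} in Section~\ref{sec:inverse-problem-diffusion}, where having all three norms $L^{\infty}_{t}H^{s}_{x}$, $L^{\infty}_{t,x}$, and $\partial_{t}u\in L^{2}$ in a single estimate is what makes the subsequent fixed-point and higher-order linearization arguments clean.
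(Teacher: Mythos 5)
Your proposal is correct and coincides with the paper's own argument: the authors explicitly state that they skip the proof because it follows by combining Corollary~\ref{cor:well-posed-linear} and Proposition~\ref{prop:sup-bound}, which is exactly the assembly you carry out, taking $\varphi=0$, bounding $\|F-(-\Delta)^{s}f\|_{L^{2}(\Omega_{T})}$ by $|\Omega_{T}|^{1/2}\|F\|_{L^{\infty}(\Omega_{T})}+\|(-\Delta)^{s}f\|_{L^{2}(\Omega_{T})}$, and using the triangle inequality on $u=(u-f)+f$ before invoking the barrier-based $L^{\infty}$ bound. The only step worth making slightly more explicit is the verification that the solution from Corollary~\ref{cor:well-posed-linear} indeed lies in $L^{2}(0,T;H^{s}(\mathbb{R}^{n}))\cap H^{1}(0,T;L^{2}(\Omega))$ so that Proposition~\ref{prop:sup-bound} applies, but you do flag this and it follows immediately from \eqref{eq:diffusion-regularity}.
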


\subsection{Well-posedness for the  nonlinear equation}

We now state the well-posedness of \eqref{eq:nonlinear-diffusion-main}
for small exterior data: 
\begin{prop}
\label{prop:well-posedness-diffusion-nonlinear}
Given any $n\in\mathbb{N}$
and $0<s<1$. Let $\Omega\subset\mathbb{R}^{n}$ be a bounded Lipschitz
domain in $\mathbb{R}^{n}$, and $W\subset\Omega^{e}$ be any
open set with Lipschitz boundary satisfying $\overline{W}\cap\overline{\Omega}=\emptyset$.
Fixing any parameter $\delta>0$. Assume that $q$ satisfies  \ref{itm:q1}--\ref{itm:q3}.
Then there exists a sufficiently small parameter $\tilde{\epsilon}_{0}=\tilde{\epsilon}_{0}(n,s,\Omega,T,\delta)>0$
such that the following statement holds: Given any $f\in\mathcal{C}_{c}^{\infty}(W_{T})$ with $\|f\|_{{\rm ext}} \leq \tilde{\epsilon}_{0}$,
there exists a unique solution $u\in L^{\infty}(0,T;H^{s}(\mathbb{R}^{n}))\cap L^{\infty}(\mathbb{R}_{T}^{n})$
of \eqref{eq:nonlinear-diffusion-main} with 
\begin{equation}
\|u\|_{L^{\infty}(0,T;H^{s}(\mathbb{R}^{n}))\cap L^{\infty}(\mathbb{R}_{T}^{n})}\le C\|f\|_{{\rm ext}}\label{eq:continuity-dependence-solution}
\end{equation}
for certain constant $C=C(n,s,T,\Omega)$. 
\end{prop}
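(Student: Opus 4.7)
The plan is to construct the solution via Banach's fixed-point theorem on the Banach space
\[
X := L^{\infty}(0,T;H^{s}(\mathbb{R}^{n})) \cap L^{\infty}(\mathbb{R}_{T}^{n}),
\]
using the linear theory of Proposition~\ref{prop:well-posed-suitable-regularity} as the iterator. For each $v \in X$ with $\|v\|_{X} \le r < \delta$ (with $r$ to be chosen), assumption \ref{itm:q2} lets us write $q(t,x,v(t,x)) = \bigl(\int_{0}^{1} \partial_{z} q(t,x,\tau v(t,x))\,\mathsf{d}\tau\bigr) v(t,x)$, and assumption \ref{itm:q3} then yields the pointwise bound $|q(t,x,v(t,x))| \le \Phi(\|v\|_{X})\|v\|_{X}$ on $\Omega_{T}$. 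In particular $F := -q(\cdot,\cdot,v) \in L^{\infty}(\Omega_{T})$, so I can define $\Psi(v) := u$ as the unique solution furnished by Proposition~\ref{prop:well-posed-suitable-regularity} (with $a \equiv 0$) of the linear problem $(\partial_{t} + (-\Delta)^{s})u = F$ in $\Omega_{T}$, $u = f$ in $\Omega_{T}^{e}$, $u(0,\cdot) = 0$. That proposition then delivers
\[
\|\Psi(v)\|_{X} \le C_{0}\bigl(\|f\|_{{\rm ext}} + \Phi(\|v\|_{X})\|v\|_{X}\bigr),
\]
for some constant $C_{0} = C_{0}(n,s,T,\Omega)$.

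Since $\Phi(\epsilon) \to 0$ as $\epsilon \to 0^{+}$ by \ref{itm:q3}, I choose $r \in (0,\delta)$ with $C_{0}\Phi(r) \le 1/2$ and set $\tilde{\epsilon}_{0} := r/(2C_{0})$. For any $f$ with $\|f\|_{{\rm ext}} \le \tilde{\epsilon}_{0}$ and any $v \in \overline{B_{r}} := \{w \in X : \|w\|_{X} \le r\}$, the previous estimate gives $\|\Psi(v)\|_{X} \le r/2 + r/2 = r$, so $\Psi$ maps $\overline{B_{r}}$ into itself. For $v_{1},v_{2} \in \overline{B_{r}}$, the difference $\Psi(v_{1}) - \Psi(v_{2})$ solves the linear problem with source $-(q(\cdot,\cdot,v_{1}) - q(\cdot,\cdot,v_{2}))$ and vanishing exterior/initial data; the mean value theorem via \ref{itm:q3} gives $|q(t,x,v_{1}) - q(t,x,v_{2})| \le \Phi(r)|v_{1}-v_{2}|$ pointwise, so a second application of Proposition~\ref{prop:well-posed-suitable-regularity} yields
\[
\|\Psi(v_{1}) - \Psi(v_{2})\|_{X} \le C_{0}\Phi(r)\|v_{1}-v_{2}\|_{X} \le \tfrac{1}{2}\|v_{1}-v_{2}\|_{X},
\]
i.e.\ $\Psi$ is a $\tfrac{1}{2}$-contraction on the complete metric space $\overline{B_{r}}$.

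Banach's fixed-point theorem then produces a unique $u \in \overline{B_{r}}$ with $\Psi(u) = u$, which is a weak solution of \eqref{eq:nonlinear-diffusion-main}; estimate \eqref{eq:continuity-dependence-solution} follows by absorption, since $\|u\|_{X} = \|\Psi(u)\|_{X} \le C_{0}\|f\|_{{\rm ext}} + \tfrac{1}{2}\|u\|_{X}$ rearranges to $\|u\|_{X} \le 2C_{0}\|f\|_{{\rm ext}}$. Global uniqueness in $X$ (not merely in $\overline{B_{r}}$) under the smallness assumption on $\|f\|_{{\rm ext}}$ follows by shrinking $\tilde{\epsilon}_{0}$ further if necessary, so that any solution $\tilde{u} \in X$ satisfying the analogue of \eqref{eq:continuity-dependence-solution} automatically lies in $\overline{B_{r}}$ and must coincide with $u$. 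I expect the main subtlety to be bookkeeping rather than analysis: one must check that the norm $\|\cdot\|_{X}$ controlling the iteration matches the exterior-data norm $\|\cdot\|_{{\rm ext}}$ appearing on the right-hand side of Proposition~\ref{prop:well-posed-suitable-regularity} with the correct power, since the heavy analytic lifting — the $L^{\infty}$-bound on the linear solution in terms of $L^{\infty}$ data — has already been established through the maximum principle and parabolic barrier culminating in Proposition~\ref{prop:sup-bound}.
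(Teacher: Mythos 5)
Your argument is correct and rests on the same mechanism as the paper's: a Banach fixed point in $L^{\infty}(0,T;H^{s}(\mathbb{R}^{n}))\cap L^{\infty}(\mathbb{R}_{T}^{n})$ driven by the $L^{\infty}$-estimate of Proposition~\ref{prop:well-posed-suitable-regularity}, together with \ref{itm:q2}--\ref{itm:q3} and the mean value theorem. The two proofs differ only in bookkeeping. The paper first solves the \emph{free} linear problem with exterior data $f$ to obtain $u_{0}$, writes $u=u_{0}+v$, and iterates the map $v\mapsto\mathcal{S}\bigl(-q(\cdot,\cdot,u_{0}+v)\bigr)$ (where $\mathcal{S}$ is the solution operator for the zero-exterior-data problem) on the ball $X_{\epsilon}$ of radius $\epsilon=\|f\|_{\rm ext}$, so the a priori bound $\|v\|\le\epsilon$ and hence \eqref{eq:continuity-dependence-solution} fall out immediately. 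You instead iterate directly on $u$ via $\Psi$, using Proposition~\ref{prop:well-posed-suitable-regularity} to handle the inhomogeneous exterior data inside the iterator, work on a ball $\overline{B_{r}}$ of \emph{fixed} radius $r$ (chosen from $\Phi$, $C_{0}$, $\delta$ alone, independent of $f$), and then recover \eqref{eq:continuity-dependence-solution} a posteriori by absorption. Both variants are standard and give the same scope of uniqueness: uniqueness in a small ball, i.e.\ among solutions obeying a bound of the form \eqref{eq:continuity-dependence-solution}. Your closing caveat is exactly the right reading of the statement --- uniqueness in all of $X$ cannot be expected, as the remark following the proposition (citing \cite{MBRS16NonlocalFractional}) makes clear, and both your proof and the paper's establish the same thing. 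The one place where you hedge --- ``the main subtlety is bookkeeping'' regarding matching $\|\cdot\|_{X}$ to $\|\cdot\|_{\rm ext}$ --- is indeed a non-issue: Proposition~\ref{prop:well-posed-suitable-regularity} is stated precisely so that its right-hand side is $\|F\|_{L^{\infty}(\Omega_{T})}+\|f\|_{\rm ext}$ (up to squares), which is what the contraction needs.
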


\begin{rem}
In order to prove Proposition~\ref{prop:well-posedness-diffusion-nonlinear},
we only need $q$ to be $\mathcal{C}^{1}$-smooth in $z$ variable. However
to recover $m$-th jet of $q$ we need to assume \ref{itm:q1}. 
\end{rem}

\begin{rem}
In \cite[Theorem~11.2]{MBRS16NonlocalFractional}, they showed that
there exist infinitely many solutions $w_{j}$ to 
\[
\begin{cases}
(-\Delta)^{s}w_{j}+q(x,w_{j})+h(x)=0 & \text{in}\;\;\Omega,\\
w_{j}=0 & \text{in}\;\;\Omega^{e},
\end{cases}
\]
such that $\|w_{j}\|_{H^{s}(\mathbb{R}^{n})}\rightarrow\infty$ as $j\rightarrow\infty$.
Therefore, the smallness assumption on $f$ seems to be necessary to ensure the uniqueness of the solution to \eqref{eq:nonlinear-diffusion-main}. 
\end{rem}

\begin{proof}
[Proof of Proposition~{\rm \ref{prop:well-posedness-diffusion-nonlinear}}]
\textbf{Step 1: Initialization.} Given any $f\in\mathcal{C}_{c}^{\infty}(\Omega_{T})$, from Proposition~\ref{prop:well-posed-suitable-regularity}, there exists
a unique solution $u_{0}=u_{0}(t,x)$ of 
\[
\begin{cases}
(\partial_{t}+(-\Delta)^{s})u=0 & \text{in}\;\;\Omega_{T},\\
u_{0}=f & \text{in}\;\;\Omega_{T}^{e},\\
u_{0}=0 & \text{in}\;\;\{0\}\times\mathbb{R}^{n},
\end{cases}
\]
with 
\begin{equation} \label{eq:diffusion-linear-shift-estimate}
\|u_{0}\|_{L^{\infty}(0,T;H^{s}(\mathbb{R}^{n}))\cap L^{\infty}(\mathbb{R}_{T}^{n})}\le C\|f\|_{{\rm ext}},
\end{equation}
for some constant $C=C(n,s,T,\Omega)$. If $u$ is a solution of \eqref{eq:nonlinear-diffusion-main},
then the remainder function $v\equiv u-u_{0}$ satisfies 
\begin{equation}
\begin{cases}
(\partial_{t}+(-\Delta)^{s})v=\mathcal{F}(v)\equiv-q(t,x,(v+u_{0})(t,x)) & \text{in}\;\;\Omega_{T},\\
v=0 & \text{in}\;\;\Omega_{T}^{e},\\
v=0 & \text{in}\;\;\{0\}\times\mathbb{R}^{n}.
\end{cases}\label{eq:diffusion-remainder}
\end{equation}
Again, using Proposition~\ref{prop:well-posed-suitable-regularity}, given
any $F=F(t,x)\in L^{\infty}(\Omega_{T})$, there exists a unique solution
$\mathcal{S}F\in L^{\infty}(0,T;\tilde{H}^{s}(\Omega))\cap L^{\infty}(\mathbb{R}_{T}^{n})$
of 
\begin{equation}
\begin{cases}
(\partial_{t}+(-\Delta)^{s})\mathcal{S}F=F & \text{in}\;\;\Omega_{T},\\
\mathcal{S}F=0 & \text{in}\;\;\Omega_{T}^{e},\\
\mathcal{S}F=0 & \text{in}\;\;\{0\}\times\mathbb{R}^{n},
\end{cases}\label{eq:diffusion-linear-iteration}
\end{equation}
with 
$\|\mathcal{S}F\|_{L^{\infty}(0,T;\tilde{H}^{s}(\Omega))\cap L^{\infty}(\mathbb{R}_{T}^{n})}\le C\|F\|_{L^{\infty}(\Omega_{T})} $
for some constant $C=C(n,s,T,\Omega)$. In other words, the solution
operator 
\begin{equation}
\mathcal{S}:L^{\infty}(\Omega_{T})\rightarrow L^{\infty}(0,T;\tilde{H}^{s}(\Omega))\cap L^{\infty}(\mathbb{R}_{T}^{n})\label{eq:solution-operator-boundedness}
\end{equation}
of \eqref{eq:diffusion-linear-iteration} is a bounded linear operator. 

\textbf{Step 2: Contraction.} Let $\epsilon=\|f\|_{{\rm ext}}$, and
we define
\[
X_{\epsilon}
:= \big\{ v\in L^{\infty}(0,T;\tilde{H}^{s}(\Omega))\cap L^{\infty}(\mathbb{R}_{T}^{n}) \,\big|\, \|v\|_{L^{\infty}(0,T;\tilde{H}^{s}(\Omega))\cap L^{\infty}(\mathbb{R}_{T}^{n})}\le\epsilon \big\}.
\]
We first show that 
\begin{equation}
\mathcal{S}\circ\mathcal{F}(v)\in X_{\epsilon}\quad\text{for all}\;\;v\in X_{\epsilon}.\label{eq:contraction1}
\end{equation}
Given any $v\in X_{\epsilon}$, using \eqref{eq:diffusion-linear-shift-estimate},
we know 
\begin{equation}
\|u_{0}+v\|_{L^{\infty}(0,T;\tilde{H}^{s}(\Omega))\cap L^{\infty}(\mathbb{R}_{T}^{n})}\le C\epsilon.\label{eq:contraction1-eq1}
\end{equation}
By choosing $\tilde{\epsilon}_{0}=\tilde{\epsilon}_{0}(n,s,\Omega,T,\delta)>0$ to be
sufficiently small, we can guarantee that $2C\epsilon\le2C\tilde{\epsilon}_{0}<\delta$.
From \ref{itm:q1} and \ref{itm:q2}, by using the mean value theorem, we can find a function
$0\le\zeta(t,x)\le1$ such that 
\begin{align}
 \mathcal{F}(v)(t,x)
 & = q(t,x,(u_{0}+v)(t,x))-q(t,x,0)\nonumber \\
 & =\partial_{z}q\big(t,x,(\zeta(u_{0}+v))(t,x)\big)(u_{0}+v)(t,x), \quad \text{for all}\;\;x\in\Omega.\label{eq:contraction1-eq2}
\end{align}
Therefore, using \ref{itm:q3}, combining \eqref{eq:contraction1-eq1} and
\eqref{eq:contraction1-eq2}, we obtain 
$
\|\mathcal{F}(v)\|_{L^{\infty}(\Omega_{T})}\le\Phi(C\epsilon)\|u_{0}+v\|_{L^{\infty}(\mathbb{R}_{T}^{n})}\le C\Phi(C\epsilon)\epsilon.
$
Using \eqref{eq:solution-operator-boundedness}, we then obtain 
$
\|\mathcal{S}\circ\mathcal{F}(v)\|_{L^{\infty}(0,T;\tilde{H}^{s}(\Omega))\cap L^{\infty}(\mathbb{R}_{T}^{n})}\le\tilde{C}\,\Phi(C\epsilon)\,\epsilon.
$
Since $\Phi$ is non-decreasing, using the assumption \ref{itm:q3}, and by choosing a smaller $\tilde{\epsilon}_{0}=\tilde{\epsilon}_{0}(n,s,\Omega,T,\delta)>0$,
we can assure that $\Phi(C\epsilon)\le\Phi(C\tilde{\epsilon}_{0})\le\tilde{C}^{-1}$,
and can obtain 
\begin{equation}
\|\mathcal{S}\circ\mathcal{F}(v)\|_{L^{\infty}(0,T;\tilde{H}^{s}(\Omega))\cap L^{\infty}(\mathbb{R}_{T}^{n})}\le\epsilon,\label{eq:contraction1-eq3}
\end{equation}
which concludes \eqref{eq:contraction1}. 

We next show that 
\begin{equation}
\mathcal{S}\circ\mathcal{F}\text{ is a contraction on }X_{\epsilon}.\label{eq:contraction2}
\end{equation}
Let $v_{1},v_{2}\in X_{\epsilon}$, similar to \eqref{eq:contraction1-eq1},
we have 
\begin{equation}
\|u_{0}+v_{j}\|_{L^{\infty}(0,T;\tilde{H}^{s}(\Omega))\cap L^{\infty}(\mathbb{R}_{T}^{n})}\le C\epsilon\quad\text{for all}\;\;j=1,2.\label{eq:contraction2-eq1}
\end{equation}
From \ref{itm:q1}, by using the mean value theorem, we can find a function $0\le\zeta(t,x)\le1$
such that 
\begin{align}
 \mathcal{F}(v_{1})(t,x)-\mathcal{F}(v_{2})(t,x)
 & = q(t,x,(u_{0}+v_{1})(t,x))-q(t,x,(u_{0}+v_{2})(t,x))\nonumber \\
 & =\partial_{z}q\big(t,x,(\zeta(u_{0}+v_{1})+(1-\zeta)(u_{0}+v_{2}))(t,x)\big)(v_{1}-v_{2})(t,x).\label{eq:contraction2-eq2}
\end{align}
Using \eqref{eq:contraction2-eq1}, we know that 
\begin{equation}
\|\zeta(u_{0}+v_{1})+(1-\zeta)(u_{0}+v_{2})\|_{L^{\infty}(\mathbb{R}_{T}^{n})}\le C\epsilon\le C\tilde{\epsilon}_{0}.\label{eq:contraction2-eq3}
\end{equation}
Since $C\tilde{\epsilon}_{0}<\delta$, using \ref{itm:q3}, combining \eqref{eq:contraction2-eq2},
we have
\begin{align*}
\|\mathcal{F}(v_{1})-\mathcal{F}(v_{2})\|_{L^{\infty}(\Omega_{T})}  & \leq \Phi(C\tilde{\epsilon}_{0}) \|v_{1}-v_{2}\|_{L^{\infty}(\Omega_{T})} \\
& \leq \Phi(C\tilde{\epsilon}_{0})\|v_{1}-v_{2}\|_{L^{\infty}(0,T;\tilde{H}^{s}(\Omega))\cap L^{\infty}(\mathbb{R}_{T}^{n})}.
\end{align*}
Using \eqref{eq:solution-operator-boundedness}, we then obtain
\[
\|\mathcal{S}\circ\mathcal{F}(v_{1})-\mathcal{S}\circ\mathcal{F}(v_{2})\|_{L^{\infty}(0,T;\tilde{H}^{s}(\Omega))\cap L^{\infty}(\mathbb{R}_{T}^{n})} \le\tilde{C}\Phi(C\tilde{\epsilon}_{0})\|v_{1}-v_{2}\|_{L^{\infty}(0,T;\tilde{H}^{s}(\Omega))\cap L^{\infty}(\mathbb{R}_{T}^{n})}.
\]
Since $\Phi$ is non-decreasing, using \ref{itm:q3}, possibly choosing a smaller $\tilde{\epsilon}_{0}=\tilde{\epsilon}_{0}(n,s,\Omega,T,\delta)>0$,
we can assure that $\Phi(C\tilde{\epsilon}_{0})\le\frac{1}{2}\tilde{C}^{-1}$,
and we obtain 
\begin{equation*}
\|\mathcal{S}\circ\mathcal{F}(v_{1})-\mathcal{S}\circ\mathcal{F}(v_{2})\|_{L^{\infty}(0,T;\tilde{H}^{s}(\Omega))\cap L^{\infty}(\mathbb{R}_{T}^{n})}
\le\frac{1}{2}\|v_{1}-v_{2}\|_{L^{\infty}(0,T;\tilde{H}^{s}(\Omega))\cap L^{\infty}(\mathbb{R}_{T}^{n})},
\end{equation*}
which concludes \eqref{eq:contraction2}. 

\textbf{Step 3: Conclusion.} From \eqref{eq:contraction1} and \eqref{eq:contraction2},
by using the Banach fixed point theorem, there exists a unique $v\in X_{\epsilon}$
such that $v=\mathcal{S}\circ\mathcal{F}(v)$, that is, there exists
a unique $v\in X_{\epsilon}$ satisfying \eqref{eq:diffusion-remainder}.
Hence we know that $u\equiv v+u_{0}\in L^{\infty}(0,T;\tilde{H}^{s}(\Omega))\cap L^{\infty}(\mathbb{R}_{T}^{n})$
is the unique solution of \eqref{eq:nonlinear-diffusion-main}. Moreover,
from \eqref{eq:diffusion-linear-shift-estimate} and \eqref{eq:contraction1-eq3},
we can conclude \eqref{eq:continuity-dependence-solution}. 
\end{proof}

\section{\label{sec:The-Runge-approximation}The Runge approximation for the
fractional diffusion equation}

The following unique continuation property for $(-\Delta)^{s}$ (see \cite{GSU20Calderon}) is crucial for our work.
\begin{lem}
[Antilocality] \label{lem:UCP}Suppose $u=(-\Delta)^{s}u=0$
in $\mathscr{O}_{T}$, for some open set $\mathscr{O}\subset\mathbb{R}^{n}$,
then $u\equiv0$ in $\mathbb{R}_{T}^{n}$. 
\end{lem}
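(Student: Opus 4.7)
The statement is the time-parametrized version of the fractional unique continuation property (fUCP) of \cite{GSU20Calderon}. Since $(-\Delta)^{s}$ acts only in the spatial variable $x$ and involves no derivative in $t$, the natural approach is to freeze $t$ and apply the known (stationary) fUCP slice-by-slice.

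\textbf{Step 1: Set up the slices.} The lemma is invoked in this paper under the regularity $u \in L^{2}(0,T;H^{s}(\mathbb{R}^{n}))$ (or better), so for a.e.\ $t \in (0,T)$ the slice $\bm{u}(t) := u(t,\cdot)$ belongs to $H^{s}(\mathbb{R}^{n})$, and $(-\Delta)^{s} \bm{u}(t) \in H^{-s}(\mathbb{R}^{n})$ is well defined. Since $u$ and $(-\Delta)^{s} u$ vanish as distributions on the product set $\mathscr{O}_{T} = (0,T) \times \mathscr{O}$, Fubini applied to test functions of the form $\varphi(t)\psi(x)$ with $\varphi \in \mathcal{C}_{c}^{\infty}(0,T)$ and $\psi \in \mathcal{C}_{c}^{\infty}(\mathscr{O})$ shows that for a.e.\ $t \in (0,T)$ one has
\[
\bm{u}(t) = 0 \quad \text{and} \quad (-\Delta)^{s} \bm{u}(t) = 0 \quad \text{in } \mathscr{O}.
\]

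\textbf{Step 2: Apply the stationary fUCP.} By the unique continuation theorem for the fractional Laplacian proved in \cite{GSU20Calderon} (valid for $w \in H^{r}(\mathbb{R}^{n})$ with any $r \in \mathbb{R}$, in particular $r = s$), the two relations in the previous display force $\bm{u}(t) \equiv 0$ in $\mathbb{R}^{n}$ for a.e.\ $t \in (0,T)$. Consequently $u \equiv 0$ as an element of $L^{2}(0,T;H^{s}(\mathbb{R}^{n}))$, hence as a distribution on $\mathbb{R}^{n}_{T}$.

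\textbf{Obstacle (minor).} The only place where care is needed is the justification that the ``slice'' statements really hold pointwise in $t$, i.e.\ that the vanishing of $u$ and of $(-\Delta)^{s}u$ on the product set $\mathscr{O}_{T}$ is equivalent to vanishing of each slice on $\mathscr{O}$ for a.e.\ $t$. This is handled by a standard Fubini/density argument using tensor-product test functions, exploiting the fact that $(-\Delta)^{s}$ commutes with multiplication by functions of $t$. Once this measure-theoretic step is in place, the spatial fUCP of \cite{GSU20Calderon} does all the analytic work, and no new ideas are required.
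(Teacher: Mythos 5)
The paper does not actually prove Lemma~\ref{lem:UCP}: it simply cites \cite{GSU20Calderon}, whose unique continuation theorem is for the \emph{stationary} fractional Laplacian, and takes the time-parametrized version for granted. Your slice-by-slice reduction to the stationary fUCP is the natural (and almost certainly intended) route, and it is correct as written. The one point you rightly flag as needing care is the Fubini step: to pass from ``$u$ and $(-\Delta)^{s}u$ vanish as distributions on the product set $\mathscr{O}_{T}$'' to ``for a.e.\ $t$, the slice $\bm{u}(t)$ and $(-\Delta)^{s}\bm{u}(t)$ vanish in $\mathscr{O}$,'' you must ensure the exceptional null set in $t$ can be chosen independently of the spatial test function $\psi$. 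This follows by running the tensor-product test function argument over a countable dense family $\{\psi_{k}\}\subset\mathcal{C}_{c}^{\infty}(\mathscr{O})$ and taking the union of the resulting null sets, together with the null set where $\bm{u}(t)\notin H^{s}(\mathbb{R}^{n})$; after that, the stationary fUCP applied slicewise gives $\bm{u}(t)\equiv 0$ in $\mathbb{R}^{n}$ for a.e.\ $t$, hence $u\equiv 0$ in $\mathbb{R}^{n}_{T}$. Your proposal is complete once this standard density bookkeeping is spelled out, and it fills a gap the paper leaves implicit.
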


The following Runge approximation property for the diffusion equation can be found in \cite[Proposition~2.4]{Li22GlobalUniquenessParabolicSemilinear}. To make our paper self-contained, here we still present the proof.
\begin{prop} \label{prop:Runge-diffusion}
Given any $n\in\mathbb{N}$ and $0<s<1$.
Let $\Omega\subset\mathbb{R}^{n}$ be a bounded Lipschitz domain in
$\mathbb{R}^{n}$, let $W\subset\Omega^{e}$ be any open set with
Lipschitz boundary satisfying $\overline{W}\cap\overline{\Omega}=\emptyset$.
Fixing any $a\in L^{\infty}(\Omega_{T})$. For each $f\in\mathcal{C}_{c}^{\infty}(W_{T})$,
let $\mathcal{P}_{a}f\in L^{\infty}(0,T;H^{s}(\mathbb{R}^{n}))\cap L^{\infty}(\mathbb{R}_{T}^{n})$
be the unique solution (see Proposition~{\rm \ref{prop:well-posed-suitable-regularity}})
of 
\[
\begin{cases}
(\partial_{t}+(-\Delta)^{s}+a(t,x))\mathcal{P}_{a}f=0 & \text{in}\;\;\Omega_{T},\\
\mathcal{P}_{a}f=f & \text{in}\;\;\Omega_{T}^{e},\\
\mathcal{P}_{a}f=0 & \text{on}\;\;\{0\}\times\mathbb{R}^{n}.
\end{cases}
\]
Then the set $\mathcal D := \{ \mathcal{P}_{a}f|_{\Omega_{T}} \,|\, f\in\mathcal{C}_{c}^{\infty}(W_{T}) \}$
is dense in $L^{2}(\Omega_{T})$. 
\end{prop}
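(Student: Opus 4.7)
The plan is the standard Hahn--Banach duality argument, with the density reduced to the unique continuation property (Lemma \ref{lem:UCP}). Suppose $F \in L^{2}(\Omega_{T})$ annihilates the candidate space, i.e.
\[
(F,\mathcal{P}_{a}f)_{L^{2}(\Omega_{T})} = 0 \qquad \forall\, f \in \mathcal{C}_{c}^{\infty}(W_{T}).
\]
The goal is to deduce $F \equiv 0$ in $\Omega_{T}$. Since the coefficient $a(t,x)$ is only bounded (not symmetric in $t$), I would introduce the adjoint (backward-in-time) diffusion equation
\[
\begin{cases}
(-\partial_{t}+(-\Delta)^{s}+a(t,x))w = F & \text{in}\;\;\Omega_{T},\\
w = 0 & \text{in}\;\;\Omega_{T}^{e},\\
w(T,\cdot) = 0 & \text{in}\;\;\mathbb{R}^{n}.
\end{cases}
\]
Applying the time reversal $\tilde{w}(t,x):=w(T-t,x)$, this problem is equivalent to a forward problem with coefficient $\tilde{a}(t,x):=a(T-t,x) \in L^{\infty}(\Omega_{T})$, source $\tilde{F}(t,x):=F(T-t,x)\in L^{2}(\Omega_{T})$, and vanishing initial and exterior data, which has a unique weak solution by Proposition \ref{prop:well-posed-linear}; translating back gives $w \in L^{2}(0,T;\tilde{H}^{s}(\Omega))$ with $\partial_{t}w \in L^{2}(0,T;H^{-s}(\Omega))$.

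The heart of the argument is an integration-by-parts identity. Testing the adjoint equation against $u:=\mathcal{P}_{a}f$ over $\Omega_{T}$, the temporal integration by parts produces no boundary terms because $u(0,\cdot)=0$ and $w(T,\cdot)=0$, while the spatial part is handled using the symmetry of the quadratic form $\int_{\mathbb{R}^{n}}(-\Delta)^{s/2}w\,(-\Delta)^{s/2}u\,\mathsf{d}x$. Since $w$ is supported in $\overline{\Omega}$ while $u$ may be nonzero in $\Omega^{e}$, the symmetric splitting between $\Omega$ and $\Omega^{e}$ leaves a nonlocal exterior contribution. Using $(\partial_{t}+(-\Delta)^{s}+a)u=0$ in $\Omega_{T}$, $w\equiv 0$ in $\Omega^{e}$, and $u=f$ in $\Omega_{T}^{e}$ with $\mathrm{supp}\,f\subset \overline{W_{T}}$, one should arrive at
\[
(F,\mathcal{P}_{a}f)_{L^{2}(\Omega_{T})} = -\int_{0}^{T}\!\int_{W}\big((-\Delta)^{s}w\big)(t,x)\,f(t,x)\,\mathsf{d}x\,\mathsf{d}t.
\]
The right-hand side makes sense because $\mathrm{dist}(W,\Omega)>0$, so on $W_{T}$ the distribution $(-\Delta)^{s}w$ is represented by the smooth kernel integral $c_{n,s}\int_{\Omega}\frac{w(t,y)}{|x-y|^{n+2s}}\,\mathsf{d}y$.

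By hypothesis the left-hand side vanishes for every admissible $f$, so $(-\Delta)^{s}w = 0$ in $W_{T}$. Combined with $w = 0$ in $W_{T}$ (as $W\subset\Omega^{e}$), the antilocality in Lemma \ref{lem:UCP} forces $w \equiv 0$ in $\mathbb{R}_{T}^{n}$. Plugging back into the adjoint equation gives $F = 0$ in $\Omega_{T}$, concluding the density by Hahn--Banach.

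The main obstacle I anticipate is the rigorous justification of the integration-by-parts identity. The temporal part is routine via the Lions--Magenes lemma applied to the Gelfand triple $\tilde{H}^{s}(\Omega)\subset L^{2}(\Omega)\subset H^{-s}(\Omega)$, and the symmetry of $(-\Delta)^{s}$ is standard on $\mathbb{R}^{n}$. The delicate point is showing that the surplus term, arising from the fact that $u$ does not vanish outside $\Omega$, collapses exactly to a pairing of $(-\Delta)^{s}w$ with $f$ on $W_{T}$; this uses both $w \in H^{s}_{\overline{\Omega}}$ and the positive distance between $W$ and $\Omega$, which ensures $(-\Delta)^{s}w$ is a classical smooth function on $W$ and the exterior pairing is an honest absolutely convergent integral rather than a distributional one.
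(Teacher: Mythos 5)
Your proposal is correct and follows essentially the same route as the paper's proof: Hahn--Banach duality, introduction of the backward adjoint solution $w$ via time reversal and Proposition \ref{prop:well-posed-linear}, integration by parts reducing the annihilation condition to $(f,(-\Delta)^s w)_{L^2(W_T)}=0$, and then the antilocality of Lemma \ref{lem:UCP} applied to $w$ on $W_T$. Your extra remarks on why $(-\Delta)^s w$ is an honest smooth function on $W_T$ (because $\mathrm{dist}(W,\Omega)>0$ and $w\in H^s_{\overline{\Omega}}$) are a sensible justification that the paper leaves implicit, though note the sign: for $x\notin\mathrm{supp}\,w$ the kernel formula gives $(-\Delta)^s w(x)=-c_{n,s}\int_\Omega \frac{w(t,y)}{|x-y|^{n+2s}}\,\mathsf{d}y$.
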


\begin{proof}
Using the Hahn-Banach theorem (see e.g.~\cite[Corollary~1.8]{Bre11PDE}),
we only need to show the following: if $v\in L^{2}(\Omega_{T})$ satisfies
\begin{equation*}
(\mathcal{P}_{a}f,v)_{L^{2}(\Omega_{T})}=0\quad\text{for all}\;\;f\in\mathcal{C}_{c}^{\infty}(W_{T}),\label{eq:HB-assumption}
\end{equation*}
then $v=0$ in $\Omega_{T}$.
By Proposition \ref{prop:well-posed-linear}, there exists a unique $\tilde{w}\in L^{\infty}(0,T;H^{s}(\mathbb{R}^{n}))\cap L^{\infty}(\mathbb{R}_{T}^{n})$
such that 
\[
\begin{cases}
(\partial_{t}+(-\Delta)^{s}+a(T-t,x))\tilde{w}=v(T-t,x) & \text{in}\;\;\Omega_{T},\\
\tilde{w}=0 & \text{in}\;\;\Omega_{T}^{e},\\
\tilde{w}=0 & \text{on}\;\;\{0\}\times\mathbb{R}^{n}.
\end{cases}
\]
Define $w(t,x):=\tilde{w}(T-t,x)$, then 
\begin{equation}
\begin{cases}
(-\partial_{t}+(-\Delta)^{s}+a(t,x))w=v(t,x) & \text{in}\;\;\Omega_{T},\\
w=0 & \text{in}\;\;\Omega_{T}^{e},\\
w=0 & \text{on}\;\;\{T\}\times\mathbb{R}^{n}.
\end{cases}\label{eq:Runge-aux}
\end{equation}
We note that 
\begin{align*}
(\mathcal{P}_{a}f,v)_{L^{2}(\Omega_{T})}
 & =(\mathcal{P}_{a}f-f,v)_{L^{2}(\Omega_{T})} \qquad \text{(because }{\rm supp}\,(f)\cap\overline{\Omega_{T}}=\emptyset)\\
 & =(\mathcal{P}_{a}f-f,(-\partial_{t}+(-\Delta)^{s}+a(t,x))w)_{L^{2}(\Omega_{T})}\\
 & =-(\mathcal{P}_{a}f,\partial_{t}w)_{L^{2}(\Omega_{T})}+(\mathcal{P}_{a}f,aw)_{L^{2}(\Omega_{T})} \qquad \text{(because }{\rm supp}\,(f)\cap\overline{\Omega_{T}}=\emptyset)\\
 & \quad+(\mathcal{P}_{a}f-f,(-\Delta)^{s}w)_{L^{2}(\mathbb{R}^{n})} \qquad \text{(because }{\rm supp}\,(\mathcal{P}_{a}f-f)\subset\overline{\Omega_{T}})\\
 & =((\partial_{t}+(-\Delta)^{s}+a)\mathcal{P}_{a}f,w)_{L^{2}(\Omega_{T})}-(f,(-\Delta)^{s}w)_{L^{2}(\mathbb{R}^{n})}\\
 & =-(f,(-\Delta)^{s}w)_{L^{2}(W_{T})}.
\end{align*}
Combining this equality with $(\mathcal{P}_{a}f,v)_{L^{2}(\Omega_{T})}=0 $, we obtain
$
(f,(-\Delta)^{s}w)_{L^{2}(W_{T})}=0$ for all $f\in\mathcal{C}_{c}^{\infty}(W_{T}),
$
which implies $(-\Delta)^{s}w=0$ in $W_{T}$. Since $w=0$ in $W_{T}$,
using Lemma~\ref{lem:UCP}, we conclude $w\equiv0$ in $\mathbb{R}_{T}^{n}$,
and hence from \eqref{eq:Runge-aux}, we conclude that $v=0$ in $\Omega_{T}$. 
\end{proof}

\section{The inverse problems for the
fractional diffusion equation} \label{sec:inverse-problem-diffusion}

In this section we perform higher order linearizations to
the nonlinear fractional diffusion equation \eqref{eq:nonlinear-diffusion-main}
as well as the DN map \eqref{eq:DN-map}, which is also nonlinear.
For each linearization step we derive certain identities and combine them with the Runge approximation to recover partial derivatives of $q$.
We start with the zeroth order linearization. 

\subsection{Zeroth order linearization}

Let $u_{j}^{\bm{\epsilon}}$ be the unique solution of 
\begin{equation}
\begin{cases}
\partial_{t}u_{j}^{\bm{\epsilon}}+(-\Delta)^{s}u_{j}^{\bm{\epsilon}}+q_{j}(\cdot,u_{j}^{\bm{\epsilon}})=0 & \text{in}\;\;\Omega_{T},\\
u_{j}^{\bm{\epsilon}}=\bm{\epsilon}\cdot\bm{g}=\epsilon_{1}g_{1}+\cdots+\epsilon_{m}g_{m} & \text{in}\;\;\Omega_{T}^{e},\\
u_{j}^{\bm{\epsilon}}=0 & \text{on}\;\;\{0\}\times\mathbb{R},
\end{cases}\label{eq:uj-epsilon1}
\end{equation}
where $\bm{g}=(g_{1},\cdots,g_{m})\in(\mathcal{C}_{c}^{\infty}(W_{T}))^{m}$.
 Since $q_{j}$ ($j=1,2$) satisfies \ref{itm:q1}--\ref{itm:q3},
there exists a constant $\epsilon_{0}=\epsilon_{0}(n,s,\Omega,T,\delta,\bm{g})>0$
with $\epsilon_{0}\le\tilde{\epsilon}_{0}$, where $\tilde{\epsilon}_{0}$
is the constant given in Proposition~\ref{prop:well-posedness-diffusion-nonlinear},
such that the following statement holds:
Given any $\bm{\epsilon}$
with 
$
|\bm{\epsilon}|=\max_{1\le k\le m}|\epsilon_{k}|<\epsilon_{0},
$
there exists a unique solution $u_{j}^{\bm{\epsilon}}\in L^{\infty}(0,T;\tilde{H}^{s}(\Omega))\cap L^{\infty}(\mathbb{R}_{T}^{n})$
of \eqref{eq:uj-epsilon1} with 
\begin{equation}
\|u_{j}^{\bm{\epsilon}}\|_{L^{\infty}(0,T;H^{s}(\mathbb{R}^{n}))\cap L^{\infty}(\mathbb{R}_{T}^{n})}\le C(n,s,\Omega,T,\bm{g},m)|\bm{\epsilon}|.\label{eq:uj-epsilon1-continuous-dependence}
\end{equation}
Therefore, the corresponding DN-map is $
\Lambda_{q_{j}}(\bm{\epsilon}\cdot\bm{g})=(-\Delta)^{s}u_{j}^{\bm{\epsilon}}\big|_{V_{T}}$ for all $0 \leq |\bm{\epsilon}|<\epsilon_{0}$.
We now show that $\bm{\epsilon}\rightarrow u_{j}^{\bm{\epsilon}}$
is continuous in the following sense: 
\begin{lem}
\label{lem:uj-epsilon1-continuity}The mapping $\bm{\epsilon}\rightarrow u_{j}^{\bm{\epsilon}}$
is continuous in $L^{\infty}(0,T;\tilde{H}^{s}(\Omega))$, that is,
\begin{equation}
\lim_{|\bm{\theta}|\rightarrow0}\|u_{j}^{\bm{\epsilon}+\bm{\theta}}-u_{j}^{\bm{\epsilon}}\|_{L^{\infty}(0,T;H^{s}(\mathbb{R}^{n}))\cap L^{\infty}(\mathbb{R}_{T}^{n})}=0\quad\text{for each }|\bm{\epsilon}|<\epsilon_{0}.\label{eq:uj-epsilon1-continuity}
\end{equation}
\end{lem}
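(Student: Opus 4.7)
The strategy is to view $w := u_j^{\bm{\epsilon}+\bm{\theta}} - u_j^{\bm{\epsilon}}$ as the unique solution of a linear fractional diffusion equation with small exterior data and no forcing, and then apply the well-posedness estimate from Proposition~\ref{prop:well-posed-suitable-regularity} to conclude.

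First I would restrict $|\bm{\theta}|$ so that $|\bm{\epsilon}+\bm{\theta}|<\epsilon_{0}$; then both $u_{j}^{\bm{\epsilon}}$ and $u_{j}^{\bm{\epsilon}+\bm{\theta}}$ are well defined via Proposition~\ref{prop:well-posedness-diffusion-nonlinear} and each satisfies the bound \eqref{eq:uj-epsilon1-continuous-dependence}. Subtracting the two instances of \eqref{eq:uj-epsilon1} and invoking the mean value theorem guaranteed by \ref{itm:q1}, I can find a measurable function $\zeta_{\bm{\epsilon},\bm{\theta}}(t,x)\in[0,1]$ such that
\[
q_{j}(t,x,u_{j}^{\bm{\epsilon}+\bm{\theta}})-q_{j}(t,x,u_{j}^{\bm{\epsilon}}) = a_{\bm{\epsilon},\bm{\theta}}(t,x)\,w(t,x),
\]
where $a_{\bm{\epsilon},\bm{\theta}}(t,x) := \partial_{z}q_{j}\bigl(t,x,\zeta_{\bm{\epsilon},\bm{\theta}}u_{j}^{\bm{\epsilon}+\bm{\theta}}+(1-\zeta_{\bm{\epsilon},\bm{\theta}})u_{j}^{\bm{\epsilon}}\bigr)$. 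By \eqref{eq:uj-epsilon1-continuous-dependence} the argument of $\partial_{z}q_{j}$ is bounded in absolute value by $C\epsilon_{0}<\delta$, so \ref{itm:q3} yields $\|a_{\bm{\epsilon},\bm{\theta}}\|_{L^{\infty}(\Omega_{T})} \le \Phi(C\epsilon_{0})$, which crucially is uniform in $\bm{\theta}$.

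Hence $w$ solves the linear initial-exterior value problem
\[
\begin{cases}
(\partial_{t}+(-\Delta)^{s}+a_{\bm{\epsilon},\bm{\theta}})w = 0 & \text{in}\;\;\Omega_{T},\\
w = \bm{\theta}\cdot\bm{g} & \text{in}\;\;\Omega_{T}^{e},\\
w = 0 & \text{on}\;\;\{0\}\times\mathbb{R}^{n},
\end{cases}
\]
with zero source and zero initial datum. Proposition~\ref{prop:well-posed-suitable-regularity} then gives
\[
\|w\|_{L^{\infty}(0,T;H^{s}(\mathbb{R}^{n}))\cap L^{\infty}(\mathbb{R}_{T}^{n})}^{2} \le C\,\|\bm{\theta}\cdot\bm{g}\|_{{\rm ext}}^{2},
\]
where the constant $C$ depends only on $n,s,T,\Omega,\Phi(C\epsilon_{0})$ and hence is independent of $\bm{\theta}$. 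Since the $g_{k}$'s are fixed, $\|\bm{\theta}\cdot\bm{g}\|_{{\rm ext}} \le |\bm{\theta}|\sum_{k=1}^{m}\|g_{k}\|_{{\rm ext}} \to 0$ as $|\bm{\theta}|\to 0$, which proves \eqref{eq:uj-epsilon1-continuity}.

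The only delicate point is ensuring that the $L^{\infty}$-bound on $a_{\bm{\epsilon},\bm{\theta}}$ is uniform in $\bm{\theta}$, so that the constant in Proposition~\ref{prop:well-posed-suitable-regularity} does not degenerate; this is precisely what the uniform bound $\Phi(C\epsilon_{0})$ provided by \ref{itm:q3} and the a priori estimate \eqref{eq:uj-epsilon1-continuous-dependence} guarantee. Aside from this, the argument is a routine linearization combined with the already-established linear theory.
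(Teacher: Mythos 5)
Your proof is correct, and it takes a genuinely, if mildly, different route from the paper. You write $w = u_j^{\bm\epsilon+\bm\theta}-u_j^{\bm\epsilon}$ as a solution of a \emph{linear} problem with zeroth-order coefficient $a_{\bm\epsilon,\bm\theta}=\partial_z q_j(\cdot,\zeta u_j^{\bm\epsilon+\bm\theta}+(1-\zeta)u_j^{\bm\epsilon})$, zero source and exterior data $\bm\theta\cdot\bm g$, and then invoke Proposition~\ref{prop:well-posed-suitable-regularity} directly; the uniform bound $\|a_{\bm\epsilon,\bm\theta}\|_{L^\infty}\le\Phi(C\epsilon_0)$ from \ref{itm:q3} and \eqref{eq:uj-epsilon1-continuous-dependence} makes the constant in the estimate independent of $\bm\theta$. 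The paper instead keeps $a\equiv 0$ and places the nonlinear difference $\mathcal G=-q_j(\cdot,u_j^{\bm\epsilon+\bm\theta})+q_j(\cdot,u_j^{\bm\epsilon})$ in the \emph{source}, applies the well-posedness estimate, then uses the mean value theorem and \ref{itm:q3} to bound $\|\mathcal G\|_{L^\infty}\le\Phi(C|\bm\epsilon|)\|w\|_{L^\infty}$ and absorbs this back into the left-hand side, which requires possibly shrinking $\epsilon_0$ so that $\tilde C\Phi(C\epsilon_0)\le 1/2$. Your version bundles that absorption step into the linear theory for a bounded potential, so no additional shrinking of $\epsilon_0$ is needed; the paper's version is more hands-on but makes the smallness mechanism explicit, mirroring the fixed-point argument used in the well-posedness proof. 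Either is acceptable here. One small point worth being explicit about: defining $a_{\bm\epsilon,\bm\theta}$ via the mean value theorem produces a pointwise $\zeta(t,x)$ whose measurability is not automatic; the usual fix, which the paper also implicitly uses, is to define $a_{\bm\epsilon,\bm\theta}$ directly as the difference quotient $[q_j(\cdot,u_j^{\bm\epsilon+\bm\theta})-q_j(\cdot,u_j^{\bm\epsilon})]/w$ where $w\ne 0$ and as $\partial_z q_j(\cdot,u_j^{\bm\epsilon})$ where $w=0$, which is measurable and still satisfies $\|a_{\bm\epsilon,\bm\theta}\|_{L^\infty}\le\Phi(C\epsilon_0)$.
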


\begin{proof}
Let $\bm{\theta}=(\theta_{1},\cdots,\theta_{m})\in\mathbb{R}^{m}$
with $|\bm{\theta}|\le|\bm{\epsilon}|$ and $|\bm{\epsilon}|+|\bm{\theta}|<\epsilon_{0}$.
We define $\delta_{\bm{\theta}}u_{j}^{\bm{\epsilon}}=u_{j}^{\bm{\epsilon}+\bm{\theta}}-u_{j}^{\bm{\epsilon}}$,
and observe that 
\begin{equation*}
\begin{cases}
(\partial_{t}+(-\Delta)^{s})\delta_{\bm{\theta}}u_{j}^{\bm{\epsilon}}=\mathcal{G} & \text{in}\;\;\Omega_{T},\\
\delta_{\bm{\theta}}u_{j}^{\bm{\epsilon}}=\bm{\theta}\cdot\bm{g} & \text{in}\;\;\Omega_{T}^{e},\\
\delta_{\bm{\theta}}u_{j}^{\bm{\epsilon}}=0 & \text{on}\;\;\{0\}\times\mathbb{R}^{n},
\end{cases} 
\end{equation*}
where 
$\mathcal{G}=-q_{j}(\cdot,u_{j}^{\bm{\epsilon}+\bm{\theta}})+q_{j}(\cdot,u_{j}^{\bm{\epsilon}}).$
From Proposition~\ref{prop:well-posed-suitable-regularity}, we know that
\begin{equation}
\|\delta_{\bm{\theta}}u_{j}^{\bm{\epsilon}}\|_{L^{\infty}(0,T;H^{s}(\mathbb{R}^{n}))\cap L^{\infty}(\mathbb{R}_{T}^{n})}\le C\big(\|\mathcal{G}\|_{L^{\infty}(\Omega_{T})}+|\bm{\theta}|\big).\label{eq:uj-epsilon1-continuity-eq2}
\end{equation}
Using mean value theorem on the $z$ variable of $q$, there exists
$0\le\zeta(t,x)\le1$ such that 
$
\mathcal{G}=-\partial_{z}q_{j}(\cdot,\zeta u_{j}^{\bm{\epsilon}+\bm{\theta}}+(1-\zeta)u_{j}^{\bm{\epsilon}})\delta_{\bm{\theta}}u_{j}^{\bm{\epsilon}}\,\,\text{in}\;\;\Omega_{T}.
$
From \eqref{eq:uj-epsilon1-continuous-dependence}, we know that 
\[
\|\zeta u_{j}^{\bm{\epsilon}+\bm{\theta}}+(1-\zeta)u_{j}^{\bm{\epsilon}}\|_{L^{\infty}(0,T;H^{s}(\mathbb{R}^{n}))\cap L^{\infty}(\mathbb{R}_{T}^{n})}\le C|\bm{\epsilon}|\quad\text{(because }|\bm{\theta}|\le|\bm{\epsilon}|).
\]
Using \ref{itm:q3}, we see that 
\[
\|\mathcal{G}\|_{L^{\infty}(\Omega_{T})}
\le \Phi(C|\bm{\epsilon}|)\|\delta_{\bm{\theta}}u_{j}^{\bm{\epsilon}}\|_{L^{\infty}(\Omega_{T})}
\le \Phi (C|\bm{\epsilon}|) \|\delta_{\bm{\theta}}u_{j}^{\bm{\epsilon}} \|_{L^{\infty}(0,T;\tilde{H}^{s}(\Omega))\cap L^{\infty}(\mathbb{R}_{T}^{n})}.
\]
Substituting this inequality into \eqref{eq:uj-epsilon1-continuity-eq2},
we obtain 
\[
\|\delta_{\bm{\theta}}u_{j}^{\bm{\epsilon}}\|_{L^{\infty}(0,T;H^{s}(\mathbb{R}^{n}))\cap L^{\infty}(\mathbb{R}_{T}^{n})}
\leq \tilde{C}\Phi(C|\bm{\epsilon}|)\|\delta_{\bm{\theta}}u_{j}^{\bm{\epsilon}}\|_{L^{\infty}(0,T;H^{s}(\mathbb{R}^{n}))\cap L^{\infty}(\mathbb{R}_{T}^{n})}+C|\bm{\theta}|.
\]
Since $\Phi$ is non-decreasing, using the assumption \ref{itm:q3}, and possibly by choosing a smaller constant $\epsilon_{0}=\epsilon_{0}(n,s,\Omega,T,\delta,\bm{g},m)>0$,
we can assure $\Phi(C|\bm{\epsilon}|)\le\Phi(C\epsilon_{0})\le\frac{1}{2}\tilde{C}^{-1}$,
thus
$
\|\delta_{\bm{\theta}}u_{j}^{\bm{\epsilon}}\|_{L^{\infty}(0,T;H^{s}(\mathbb{R}^{n}))\cap L^{\infty}(\mathbb{R}_{T}^{n})}\le2C|\bm{\theta}|,$
which implies \eqref{eq:uj-epsilon1-continuity}.
This completes the  proof.
\end{proof}

By setting $\bm{\epsilon}=0$ in \eqref{eq:uj-epsilon1}, we obtain 
\begin{align*}
&\partial_{t}u_{j}^{0}+(-\Delta)^{s}u_{j}^{0}+q_{j}(\cdot,u_{j}^{0})=0 \quad \text{in}\;\;\Omega_{T},\\
&u_{j}^{0}=0 \quad \text{in}\;\;\Omega_{T}^{e},\quad
u_{j}^{0}=0 \quad \text{on}\;\;\{0\}\times\mathbb{R}.
\end{align*}
From \eqref{eq:uj-epsilon1-continuous-dependence}, we know that $
u_{j}^{0} \equiv 0$ in $\mathbb{R}_{T}^{n}$.

\subsection{\label{subsec:First-order-linearization}First order linearization}

Assuming the derivative $\partial_{\epsilon_{1}}$ to
\eqref{eq:uj-epsilon1} is well-defined,
we obtain 
\begin{equation}
\begin{cases}
(\partial_{t}+(-\Delta)^{s}+\partial_{z}q_{j}(\cdot,u_{j}^{\bm{\epsilon}}))(\partial_{\epsilon_{1}}u_{j}^{\bm{\epsilon}})=0 & \text{in}\;\;\Omega_{T},\\
\partial_{\epsilon_{1}}u_{j}^{\bm{\epsilon}}=g_{1} & \text{in}\;\;\Omega_{T}^{e},\\
\partial_{\epsilon_{1}}u_{j}^{\bm{\epsilon}}=0 & \text{on}\;\;\{0\}\times\mathbb{R}^{n}.
\end{cases}\label{eq:formal-first-linearization}
\end{equation}
Using \ref{itm:q3}, we know that $
\|\partial_{z}q_{j}(\cdot,u_{j}^{\bm{\epsilon}})\|_{L^{\infty}(\Omega_{T})}\le\Phi(C\epsilon_{0})\le1.$
Therefore, using Proposition~\ref{prop:well-posed-suitable-regularity},
given any $\bm{\epsilon}$ with $|\bm{\epsilon}|<\epsilon_{0}$, there
exists a unique solution $v_{j}^{\bm{\epsilon}}\in L^{\infty}(0,T;\tilde{H}^{s}(\Omega))\cap L^{\infty}(\mathbb{R}_{T}^{n})$
to \eqref{eq:formal-first-linearization} with 
\begin{equation}
\|v_{j}^{\bm{\epsilon}}\|_{L^{\infty}(0,T;H^{s}(\mathbb{R}^{n}))\cap L^{\infty}(\mathbb{R}_{T}^{n})}\le C\|g_{1}\|_{{\rm ext}}.\label{eq:first-derivative-aux}
\end{equation}
Here, $v_{j}^{\bm{\epsilon}}$ is just a intermediate function which will be dropped after showing that $\partial_{\epsilon_{1}}u_{j}^{\bm{\epsilon}}$
is well-defined. 
\begin{lem}
\label{lem:first-derivative-well-defined}There exists a constant
$\epsilon_{0}=\epsilon_{0}(n,s,\Omega,T,\delta,\bm{g},m)>0$ with
$0<\epsilon_{0}<\tilde{\epsilon}_{0}$, where $\tilde{\epsilon}_{0}$
is given in Proposition~{\rm \ref{prop:well-posedness-diffusion-nonlinear}},
such that for each $\bm{\epsilon}$ with $|\bm{\epsilon}|<\epsilon_{0}$,
we have 
\begin{equation*}
\lim_{\epsilon_{1}\rightarrow0}\|v_{j}^{\bm{\epsilon}}-\delta_{\epsilon_{1}}u_{j}^{\bm{\epsilon}}\|_{L^{\infty}(0,T;H^{s}(\mathbb{R}^{n}))\cap L^{\infty}(\mathbb{R}_{T}^{n})}=0,\label{eq:first-derivative-well-defined}
\end{equation*}
where 
$
\delta_{\epsilon_{1}}u_{j}^{\bm{\epsilon}}=\frac{u_{j}^{\bm{\epsilon}+\epsilon_{1}\bm{e}_{1}}-u_{j}^{\bm{\epsilon}}}{\epsilon_{1}}\quad\text{for all}\;\;(t,x)\in\Omega_{T},
$
provided that $|\bm{\epsilon}|+|\epsilon_{1}|<\epsilon_{0}$. 
\end{lem}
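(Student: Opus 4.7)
The plan is to form $w := v_j^{\bm{\epsilon}} - \delta_{\epsilon_1} u_j^{\bm{\epsilon}}$, show that it solves a linear fractional diffusion equation with zero initial/exterior data and a source term of order $O(|\epsilon_1|)$ in $L^{\infty}(\Omega_T)$, and then conclude via Proposition~\ref{prop:well-posed-suitable-regularity}.

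First I would derive the equation for $\delta_{\epsilon_1} u_j^{\bm{\epsilon}}$ by subtracting the two copies of \eqref{eq:uj-epsilon1} at $\bm{\epsilon}+\epsilon_1\bm{e}_1$ and at $\bm{\epsilon}$, dividing by $\epsilon_1$, and applying the fundamental theorem of calculus in the $z$-variable to write
\[
\frac{q_j(\cdot,u_j^{\bm{\epsilon}+\epsilon_1\bm{e}_1}) - q_j(\cdot,u_j^{\bm{\epsilon}})}{\epsilon_1} = a_{\epsilon_1}^{\bm{\epsilon}} \, \delta_{\epsilon_1} u_j^{\bm{\epsilon}}, \quad a_{\epsilon_1}^{\bm{\epsilon}}(t,x) := \int_0^1 \partial_z q_j\bigl(t,x, u_j^{\bm{\epsilon}} + \tau \epsilon_1 \delta_{\epsilon_1} u_j^{\bm{\epsilon}}\bigr) \,\mathsf{d}\tau.
\]
Thus $\delta_{\epsilon_1} u_j^{\bm{\epsilon}}$ solves a linear problem with zero-order coefficient $a_{\epsilon_1}^{\bm{\epsilon}}$, exterior data $g_1$, and zero initial data. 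Assumption \ref{itm:q3} together with \eqref{eq:uj-epsilon1-continuous-dependence} and a possible shrinking of $\epsilon_0$ give $\|a_{\epsilon_1}^{\bm{\epsilon}}\|_{L^{\infty}(\Omega_T)} \le \Phi(C\epsilon_0) \le 1$, so Proposition~\ref{prop:well-posed-suitable-regularity} yields the \emph{uniform} bound
\[
\|\delta_{\epsilon_1} u_j^{\bm{\epsilon}}\|_{L^{\infty}(0,T;H^{s}(\mathbb{R}^n)) \cap L^{\infty}(\mathbb{R}_T^n)} \le C \|g_1\|_{\mathrm{ext}},
\]
independent of $\epsilon_1$, which is the central quantitative input.

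Subtracting \eqref{eq:formal-first-linearization} from the equation just derived, and noting that the exterior and initial data of $w$ vanish, I obtain
\[
(\partial_t + (-\Delta)^s + \partial_z q_j(\cdot,u_j^{\bm{\epsilon}})) w = \bigl(a_{\epsilon_1}^{\bm{\epsilon}} - \partial_z q_j(\cdot,u_j^{\bm{\epsilon}})\bigr) \delta_{\epsilon_1} u_j^{\bm{\epsilon}} \quad \text{in } \Omega_T.
\]
To estimate the source I would apply the fundamental theorem of calculus once more to $\partial_z q_j$, using that \ref{itm:q1} with $m \ge 2$ provides $\partial_z^2 q_j$ with $L^{\infty}$-bound $M_2$ from \ref{itm:q4}. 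This gives $|a_{\epsilon_1}^{\bm{\epsilon}} - \partial_z q_j(\cdot,u_j^{\bm{\epsilon}})| \le \tfrac{1}{2} M_2 |\epsilon_1| \, |\delta_{\epsilon_1} u_j^{\bm{\epsilon}}|$; combined with the uniform $L^{\infty}$ bound on $\delta_{\epsilon_1} u_j^{\bm{\epsilon}}$, the source is $O(|\epsilon_1|)$ in $L^{\infty}(\Omega_T)$.

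A final application of Proposition~\ref{prop:well-posed-suitable-regularity} to $w$ then gives $\|w\|_{L^{\infty}(0,T;H^{s}(\mathbb{R}^n)) \cap L^{\infty}(\mathbb{R}_T^n)} \le C |\epsilon_1| \to 0$ as $\epsilon_1 \to 0$, which is the desired conclusion. No serious obstacle is anticipated: the argument is essentially a quantitative refinement of the contraction step in Proposition~\ref{prop:well-posedness-diffusion-nonlinear}, and all the required ingredients -- the linear $L^{\infty}$-well-posedness, the small-data bound on $u_j^{\bm{\epsilon}}$, and the $\mathcal{C}^2$-regularity of $q_j$ in $z$ -- are already in place.
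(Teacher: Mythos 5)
Your proposal is correct, and it takes a genuinely different route from the paper's proof. The paper writes $-\mathcal{G}_1 = \partial_z q(\cdot,u_j^{\bm\epsilon})\,v_j^{\bm\epsilon} - \epsilon_1^{-1}\big[q_j(\cdot,u_j^{\bm\epsilon+\epsilon_1\bm e_1})-q_j(\cdot,u_j^{\bm\epsilon})\big]$, applies the pointwise mean value theorem twice (introducing parameters $\zeta,\eta$), absorbs the term proportional to $\partial_z q_j(\cdot,\ldots)\big[v_j^{\bm\epsilon}-\delta_{\epsilon_1}u_j^{\bm\epsilon}\big]$ into the left-hand side of the final estimate (using $\Phi(C|\bm\epsilon|)\le\tfrac12$), and then controls the remaining term via the separate continuity result Lemma~\ref{lem:uj-epsilon1-continuity}, which shows $\|u_j^{\bm\epsilon+\epsilon_1\bm e_1}-u_j^{\bm\epsilon}\|_{L^\infty(\Omega_T)}\to 0$. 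You instead introduce the fundamental-theorem-of-calculus coefficient $a_{\epsilon_1}^{\bm\epsilon}$, which makes $\delta_{\epsilon_1}u_j^{\bm\epsilon}$ itself a solution of a linear problem with coefficient bounded uniformly in $\epsilon_1$; this gives you the uniform bound $\|\delta_{\epsilon_1}u_j^{\bm\epsilon}\|\le C\|g_1\|_{\mathrm{ext}}$ directly from Proposition~\ref{prop:well-posed-suitable-regularity}, without invoking Lemma~\ref{lem:uj-epsilon1-continuity} at all. The subsequent estimate $|a_{\epsilon_1}^{\bm\epsilon}-\partial_z q_j(\cdot,u_j^{\bm\epsilon})|\le\tfrac12 M_2|\epsilon_1|\,|\delta_{\epsilon_1}u_j^{\bm\epsilon}|$ then gives a source of size $O(|\epsilon_1|)$, and the absorption step of the paper's proof is replaced by simply putting the zeroth-order coefficient $\partial_z q_j(\cdot,u_j^{\bm\epsilon})$ on the left side of the equation for $w$.

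What each buys: your argument is self-contained and yields a clean quantitative rate $\|w\|=O(|\epsilon_1|)$ in one pass, whereas the paper modularizes the argument — it proves the continuity estimate $\|u_j^{\bm\epsilon+\bm\theta}-u_j^{\bm\epsilon}\|\le 2C|\bm\theta|$ once in Lemma~\ref{lem:uj-epsilon1-continuity} (by essentially the same FTC/MVT device you use for $a_{\epsilon_1}^{\bm\epsilon}$) and then reuses it as a black box here and in the higher-order linearizations. Your version effectively re-derives that continuity estimate inline, so it is slightly more compact for this lemma but would be repeated at each order; the paper's factoring is better suited to the induction carried out later. Both proofs use the same essential ingredients ($L^\infty$ well-posedness, \ref{itm:q3}, \ref{itm:q4}, shrinking $\epsilon_0$ so that the relevant $z$-arguments stay in $(-\delta,\delta)$), and the step where you check that $(1-\tau)u_j^{\bm\epsilon}+\tau u_j^{\bm\epsilon+\epsilon_1\bm e_1}$ lies in $(-\delta,\delta)$ and $\|a_{\epsilon_1}^{\bm\epsilon}\|_{L^\infty}\le 1$ is the exact same smallness bookkeeping as in the paper.
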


\begin{proof}
Let $\epsilon_{1}$ satisfies $|\epsilon_{1}|\le|\bm{\epsilon}|$
and $|\bm{\epsilon}|+|\epsilon_{1}|<\epsilon_{0}$. Note that 
\begin{equation*}
\begin{cases}
(\partial_{t}+(-\Delta)^{s})(v_{j}^{\bm{\epsilon}}-\delta_{\epsilon_{1}}u_{j}^{\bm{\epsilon}})=\mathcal{G}_{1} & \text{in}\;\;\Omega_{T},\\
(v_{j}^{\bm{\epsilon}}-\delta_{\epsilon_{1}}u_{j}^{\bm{\epsilon}})=0 & \text{in}\;\;\Omega_{T}^{e} \;\; \text{and on} \;\; \{0\} \times \mathbb R^n,
\end{cases} 
\end{equation*}
with 
$
-\mathcal{G}_{1}=\partial_{z}q(\cdot,u_{j}^{\bm{\epsilon}})v_{j}^{\bm{\epsilon}}-\frac{q_{j}(\cdot,u_{j}^{\bm{\epsilon}+\epsilon_{1}\bm{e}_{1}})-q_{j}(\cdot,u_{j}^{\bm{\epsilon}})}{\epsilon_{1}}.
$
From Proposition~\ref{prop:well-posed-suitable-regularity}, we know that
\begin{equation}
\|v_{j}^{\bm{\epsilon}}-\delta_{\epsilon_{1}}u_{j}^{\bm{\epsilon}}\|_{L^{\infty}(0,T;\tilde{H}^{s}(\Omega))\cap L^{\infty}(\mathbb{R}_{T}^{n})}\le C\|\mathcal{G}_{1}\|_{L^{\infty}(\Omega_{T})}.\label{eq:first-derivative-eq2}
\end{equation}
Using the mean value theorem on the $z$ variable of $q$, there exists
$0\le\zeta(t,x)\le1$ such that 
\begin{align*}
-\mathcal{G}_{1}
 & =\big[\partial_{z}q(\cdot,u_{j}^{\bm{\epsilon}})-\partial_{z}q_{j}(\cdot,\zeta u_{j}^{\bm{\epsilon}+\epsilon_{1}\bm{e}_{1}}+(1-\zeta)u_{j}^{\bm{\epsilon}})\big]v_{j}^{\bm{\epsilon}}\\
 & \quad+\partial_{z}q_{j}(\cdot,\zeta u_{j}^{\bm{\epsilon}+\epsilon_{1}\bm{e}_{1}}+(1-\zeta)u_{j}^{\bm{\epsilon}})\big[v_{j}^{\bm{\epsilon}}-\delta_{\epsilon_{1}}u_{j}^{\bm{\epsilon}}\big].
\end{align*}
Using mean value theorem on the $z$ variable of $\partial_{z}q$,
there exists $0\le\eta(t,x)\le1$ such that 
\begin{align*}
-\mathcal{G}_{1}
 & =-\zeta\partial_{z}^{2}q\big(\eta u_{j}^{\bm{\epsilon}}-(1-\eta)(\zeta u_{j}^{\bm{\epsilon}+\epsilon_{1}\bm{e}_{1}}+(1-\zeta)u_{j}^{\bm{\epsilon}})\big)(u_{j}^{\bm{\epsilon}+\epsilon_{1}\bm{e}_{1}}-u_{j}^{\bm{\epsilon}})v_{j}^{\bm{\epsilon}}\\
 & \quad+\partial_{z}q_{j}(\cdot,\zeta u_{j}^{\bm{\epsilon}+\epsilon_{1}\bm{e}_{1}}+(1-\zeta)u_{j}^{\bm{\epsilon}})\big[v_{j}^{\bm{\epsilon}}-\delta_{\epsilon_{1}}u_{j}^{\bm{\epsilon}}\big]
\end{align*}
From \eqref{eq:uj-epsilon1-continuous-dependence} and $|\epsilon_{1}|\le|\bm{\epsilon}|$,
we have 
\begin{align*}
\|\eta u_{j}^{\bm{\epsilon}}-(1-\eta)(\zeta u_{j}^{\bm{\epsilon}+\epsilon_{1}\bm{e}_{1}}+(1-\zeta)u_{j}^{\bm{\epsilon}})\|_{L^{\infty}(0,T;H^{s}(\mathbb{R}^{n}))\cap L^{\infty}(\mathbb{R}_{T}^{n})} & \le C|\bm{\epsilon}|,\\
\|\zeta u_{j}^{\bm{\epsilon}+\epsilon_{1}\bm{e}_{1}}+(1-\zeta)u_{j}^{\bm{\epsilon}}\|_{L^{\infty}(0,T;H^{s}(\mathbb{R}^{n}))\cap L^{\infty}(\mathbb{R}_{T}^{n})} & \le C|\bm{\epsilon}|.
\end{align*}
Hence, by \ref{itm:q3} and \ref{itm:q4}, we know that 
\begin{align*}
\big\|\zeta\partial_{z}^{2}q\big(\eta u_{j}^{\bm{\epsilon}}-(1-\eta)(\zeta u_{j}^{\bm{\epsilon}+\epsilon_{1}\bm{e}_{1}}+(1-\zeta)u_{j}^{\bm{\epsilon}})\big)\big\|_{L^{\infty}(\Omega_{T})} & \le M_{2},\\
\|\partial_{z}q_{j}(\cdot,\zeta u_{j}^{\bm{\epsilon}+\epsilon_{1}\bm{e}_{1}}+(1-\zeta)u_{j}^{\bm{\epsilon}})\|_{L^{\infty}(\Omega_{T})} & \le\Phi(C|\bm{\epsilon}|).
\end{align*}
Hence, by using \eqref{eq:first-derivative-aux} we know that 
\begin{align*}
\|\mathcal{G}_{1}\|_{L^{\infty}(\Omega_{T})}
& \le CM_{2}\|g_{1}\|_{{\rm ext}}\|u_{j}^{\bm{\epsilon}+\epsilon_{1}\bm{e}_{1}}-u_{j}^{\bm{\epsilon}}\|_{L^{\infty}(\Omega_{T})} + \Phi(C|\bm{\epsilon}|)\|v_{j}^{\bm{\epsilon}}-\delta_{\epsilon_{1}}u_{j}^{\bm{\epsilon}}\|_{L^{\infty}(\Omega_{T})}.
\end{align*}
Combining this with \eqref{eq:first-derivative-eq2}, we
have 
\begin{align*}
 & \|v_{j}^{\bm{\epsilon}}-\delta_{\epsilon_{1}}u_{j}^{\bm{\epsilon}}\|_{L^{\infty}(0,T;\tilde{H}^{s}(\Omega))\cap L^{\infty}(\mathbb{R}_{T}^{n})}\\
 & \le \tilde{C}M_{2}\|g_{1}\|_{{\rm ext}}\|u_{j}^{\bm{\epsilon}+\epsilon_{1}\bm{e}_{1}}-u_{j}^{\bm{\epsilon}}\|_{L^{\infty}(\Omega_{T})}+ \Phi(C|\bm{\epsilon}|)\|v_{j}^{\bm{\epsilon}}-\delta_{\epsilon_{1}}u_{j}^{\bm{\epsilon}}\|_{L^{\infty}(0,T;\tilde{H}^{s}(\Omega))\cap L^{\infty}(\mathbb{R}_{T}^{n})}.
\end{align*}
Since $\Phi$ is non-decreasing, using the limiting assumption of
$\Phi$ in \ref{itm:q3}, possibly choosing a smaller $\epsilon_{0}=\epsilon_{0}(n,s,\Omega,T,\delta,\bm{g},m)>0$,
we can assure that $\Phi(C|\bm{\epsilon}|)\le\frac{1}{2}$, and hence
\[
\|v_{j}^{\bm{\epsilon}}-\delta_{\epsilon_{1}}u_{j}^{\bm{\epsilon}}\|_{L^{\infty}(0,T;\tilde{H}^{s}(\Omega))\cap L^{\infty}(\mathbb{R}_{T}^{n})}\le\tilde{C}M_{2}\|g_{1}\|_{{\rm ext}}\|u_{j}^{\bm{\epsilon}+\epsilon_{1}\bm{e}_{1}}-u_{j}^{\bm{\epsilon}}\|_{L^{\infty}(\Omega_{T})}.
\]
Finally, using Lemma~\ref{lem:uj-epsilon1-continuity}, we conclude Lemma
\ref{eq:first-derivative-well-defined}. 
\end{proof}


Using \ref{itm:q4}, we also see that $\partial_{\epsilon_{1}}u_{j}^{\bm{\epsilon}}|_{\bm{\epsilon}=0}$
satisfies 
\begin{equation}
\begin{cases}
(\partial_{t}+(-\Delta)^{s})(\partial_{\epsilon_{1}}u_{j}^{\bm{\epsilon}}|_{\bm{\epsilon}=0})=0 & \text{in}\;\;\Omega_{T},\\
\partial_{\epsilon_{1}}u_{j}^{\bm{\epsilon}}|_{\bm{\epsilon}=0}=g_{1} & \text{in}\;\;\Omega_{T}^{e},\\
\partial_{\epsilon_{1}}u_{j}^{\bm{\epsilon}}|_{\bm{\epsilon}=0}=0 & \text{on}\;\;\{0\}\times\mathbb{R}^{n}.
\end{cases}\label{eq:first-derivative-epsilon-zero-equation}
\end{equation}
By uniqueness of solutions (see Proposition~\ref{prop:well-posed-suitable-regularity}),
we know that $
\partial_{\epsilon_{1}}u_{1}^{\bm{\epsilon}}|_{\bm{\epsilon}=0}=\partial_{\epsilon_{1}}u_{2}^{\bm{\epsilon}}|_{\bm{\epsilon}=0}\text{in}\;\;\Omega_{T}.$
For later convenience, we simply denote 
\begin{equation}
\partial_{\epsilon_{1}}u^{\bm{\epsilon}}|_{\bm{\epsilon}=0}=\partial_{\epsilon_{1}}u_{1}^{\bm{\epsilon}}|_{\bm{\epsilon}=0}=\partial_{\epsilon_{1}}u_{2}^{\bm{\epsilon}}|_{\bm{\epsilon}=0}\text{in}\;\;\Omega_{T}.\label{eq:first-derivative-epsilon-zero}
\end{equation}

In the next lemma we show that the information from DN-map can be passed
to the first-order linearized DN-map: 
\begin{lem}
\label{lem:information-first-linearized-DN-map}If $\Lambda_{q_{1}}(f)=\Lambda_{q_{1}}(f)$
for all $f\in\mathcal{C}_{c}^{\infty}(W_{T})$ with $\|f\|_{{\rm ext}}\le\tilde{\epsilon}_{0}$,
where $\tilde{\epsilon}_{0}$ is the constant given in Proposition~{\rm \ref{prop:well-posedness-diffusion-nonlinear}},
then there exists a constant $\epsilon_{0}=\epsilon_{0}(n,s,\Omega,T,\delta,\bm{g},m)>0$
with $0<\epsilon_{0}<\tilde{\epsilon}_{0}$ such that 
\begin{equation}
(-\Delta)^{s}\partial_{\epsilon_{1}}u_{1}^{\bm{\epsilon}}\big|_{V_{T}}=(-\Delta)^{s}\partial_{\epsilon_{1}}u_{2}^{\bm{\epsilon}}\big|_{V_{T}}\label{eq:information-first-linearized-DN-map} \quad \mbox{for all} \quad \epsilon \quad \mbox{with} \quad  |\bm{\epsilon}|\le\epsilon_{0}.
\end{equation}
\end{lem}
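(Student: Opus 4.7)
The plan is to apply the hypothesis $\Lambda_{q_1}=\Lambda_{q_2}$ to a pair of exterior data differing by a perturbation in the $g_1$ direction, form a difference quotient in $V_T$, and then pass to the limit using Lemma~\ref{lem:first-derivative-well-defined} together with the continuity of $(-\Delta)^s$ on fractional Sobolev spaces.

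First, I would shrink the constant $\epsilon_0=\epsilon_0(n,s,\Omega,T,\delta,\bm{g},m)>0$ provided by Lemma~\ref{lem:first-derivative-well-defined} so that in addition
\[
\|\bm{\epsilon}\cdot\bm{g}+\epsilon_1 g_1\|_{{\rm ext}}\le (|\bm{\epsilon}|+|\epsilon_1|)\sum_{k=1}^{m}\|g_k\|_{{\rm ext}}\le\tilde{\epsilon}_0
\]
whenever $|\bm{\epsilon}|+|\epsilon_1|<\epsilon_0$; this is possible because the exterior norms of the fixed functions $g_1,\dots,g_m$ are finite, and it ensures that the hypothesis on the DN-maps applies to both $\bm{\epsilon}\cdot\bm{g}$ and $\bm{\epsilon}\cdot\bm{g}+\epsilon_1 g_1$.

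Next, for such admissible $\bm{\epsilon}$ and $\epsilon_1\neq 0$, applying the assumption $\Lambda_{q_1}(f)=\Lambda_{q_2}(f)$ to $f=\bm{\epsilon}\cdot\bm{g}$ and to $f=\bm{\epsilon}\cdot\bm{g}+\epsilon_1 g_1$ and subtracting, I obtain
\[
(-\Delta)^{s}\big(u_{1}^{\bm{\epsilon}+\epsilon_{1}\bm{e}_{1}}-u_{1}^{\bm{\epsilon}}\big)\big|_{V_{T}}
= (-\Delta)^{s}\big(u_{2}^{\bm{\epsilon}+\epsilon_{1}\bm{e}_{1}}-u_{2}^{\bm{\epsilon}}\big)\big|_{V_{T}}.
\]
Dividing by $\epsilon_1$ yields the identity $(-\Delta)^{s}\delta_{\epsilon_1}u_{1}^{\bm{\epsilon}}\big|_{V_{T}}=(-\Delta)^{s}\delta_{\epsilon_1}u_{2}^{\bm{\epsilon}}\big|_{V_{T}}$.

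Finally, I would pass to the limit $\epsilon_1\to 0$. By Lemma~\ref{lem:first-derivative-well-defined}, $\delta_{\epsilon_1}u_{j}^{\bm{\epsilon}}\to v_{j}^{\bm{\epsilon}}$ in $L^{\infty}(0,T;H^{s}(\mathbb{R}^{n}))\cap L^{\infty}(\mathbb{R}_{T}^{n})$, where $v_j^{\bm{\epsilon}}$ coincides with $\partial_{\epsilon_1}u_j^{\bm{\epsilon}}$ in the sense of that lemma. Since $(-\Delta)^{s}\colon H^{s}(\mathbb{R}^{n})\to H^{-s}(\mathbb{R}^{n})$ is a bounded linear operator (by its Fourier-multiplier definition) and restriction to $V$ is continuous $H^{-s}(\mathbb{R}^{n})\to H^{-s}(V)$, we obtain
\[
(-\Delta)^{s}\delta_{\epsilon_1}u_{j}^{\bm{\epsilon}}\big|_{V_{T}}\longrightarrow (-\Delta)^{s}\partial_{\epsilon_1}u_{j}^{\bm{\epsilon}}\big|_{V_{T}} \qquad \text{in}\;\;L^{\infty}(0,T;H^{-s}(V)).
\]
Letting $\epsilon_1\to 0$ in the equality derived in the previous step gives \eqref{eq:information-first-linearized-DN-map}. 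The only delicate point is the bookkeeping of smallness constants needed to simultaneously invoke Proposition~\ref{prop:well-posedness-diffusion-nonlinear}, Lemma~\ref{lem:first-derivative-well-defined}, and the DN-map hypothesis; once this is arranged, the argument is a routine application of continuity of $(-\Delta)^s$.
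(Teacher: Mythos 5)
Your proposal is correct and takes essentially the same approach as the paper: both arguments identify the difference quotient of DN-map values with $(-\Delta)^{s}\delta_{\epsilon_1}u_j^{\bm{\epsilon}}|_{V_T}$, use the hypothesis $\Lambda_{q_1}=\Lambda_{q_2}$ to equate these quotients, and pass to the limit via Lemma~\ref{lem:first-derivative-well-defined} combined with the boundedness of $(-\Delta)^{s}\colon H^{s}(\mathbb{R}^n)\to H^{-s}(\mathbb{R}^n)$ and the restriction $H^{-s}(\mathbb{R}^n)\to H^{-s}(V)$. The only cosmetic difference is that you subtract the two DN-map identities before taking the limit, whereas the paper establishes the limit for each $j$ first and then invokes the DN-map equality.
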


\begin{proof}
We have 
\begin{align*}
& \big\|(-\Delta)^{s}\partial_{\epsilon_{1}}u_{j}^{\bm{\epsilon}}-\frac{\Lambda_{q_{j}}((\bm{\epsilon}+\epsilon_{1}\bm{e}_{1})\cdot\bm{g})-\Lambda_{q_{j}}(\bm{\epsilon}\cdot\bm{g})}{\epsilon_{1}}\big\|_{L^{\infty}(0,T;H^{-s}(V))}\\
= & \quad \big\|(-\Delta)^{s}\big(\partial_{\epsilon_{1}}u_{j}^{\bm{\epsilon}}-\frac{u_{j}^{\bm{\epsilon}+\epsilon_{1}\bm{e}_{1}}-u_{j}^{\bm{\epsilon}}}{\epsilon_{1}}\big)\big\|_{L^{\infty}(0,T;H^{-s}(V))}\\
\leq & \quad \big\|(-\Delta)^{s}\big(\partial_{\epsilon_{1}}u_{j}^{\bm{\epsilon}}-\frac{u_{j}^{\bm{\epsilon}+\epsilon_{1}\bm{e}_{1}}-u_{j}^{\bm{\epsilon}}}{\epsilon_{1}}\big)\big\|_{L^{\infty}(0,T;H^{-s}(\mathbb{R}^{n}))} \\
\leq & \quad C\big\|\big(\partial_{\epsilon_{1}}u_{j}^{\bm{\epsilon}}-\frac{u_{j}^{\bm{\epsilon}+\epsilon_{1}\bm{e}_{1}}-u_{j}^{\bm{\epsilon}}}{\epsilon_{1}}\big)\big\|_{L^{\infty}(0,T;H^{s}(\mathbb{R}^{n}))}.
\end{align*}
From Lemma~\ref{lem:first-derivative-well-defined}, we have 
\[
\lim_{\epsilon_{1}\rightarrow0}\big\|(-\Delta)^{s}\partial_{\epsilon_{1}}u_{j}^{\bm{\epsilon}}-\frac{\Lambda_{q_{j}}((\bm{\epsilon}+\epsilon_{1}\bm{e}_{1})\cdot\bm{g})-\Lambda_{q_{j}}(\bm{\epsilon}\cdot\bm{g})}{\epsilon_{1}}\big\|_{L^{\infty}(0,T;H^{-s}(V))}=0.
\]
Combining this equality with the assumption $\Lambda_{q_{1}}=\Lambda_{q_{2}}$,
we conclude \eqref{eq:information-first-linearized-DN-map}. 
\end{proof}

\subsection{Second order linearization}

First of all, we recall \eqref{eq:first-derivative-aux}: 
\begin{equation}
\|\partial_{\epsilon_{p}}u_{j}^{\bm{\epsilon}}\|_{L^{\infty}(0,T;H^{s}(\mathbb{R}^{n}))\cap L^{\infty}(\mathbb{R}_{T}^{n})}\le C\|g_{p}\|_{{\rm ext}}\quad \mbox{for $p=1,2$}.
\label{eq:first-derivative-bound}
\end{equation}
see Lemma~\ref{lem:first-derivative-well-defined}. 
Acting a formal differential operator $\partial_{\epsilon_{2}}$ on
\eqref{eq:formal-first-linearization}, we obtain 
\begin{equation} \label{eq:formal-second-linearization}
\begin{cases}
\begin{array}{l}
(\partial_{t}+(-\Delta)^{s}+\partial_{z}q_{j}(\cdot,u_{j}^{\bm{\epsilon}}))(\partial_{\epsilon_{1}\epsilon_{2}}u_{j}^{\bm{\epsilon}})\\
\quad+\partial_{z}^{2}q_{j}(\cdot,u_{j}^{\bm{\epsilon}})(\partial_{\epsilon_{1}}u_{j}^{\bm{\epsilon}})(\partial_{\epsilon_{2}}u_{j}^{\bm{\epsilon}})=0
\end{array} & \mbox{in}\;\;\Omega_{T},\\
\partial_{\epsilon_{1}\epsilon_{2}}u_{j}^{\bm{\epsilon}}=0 & \text{in}\;\;\Omega_{T}^{e} \;\; \text{and on} \;\; \{0\} \times \mathbb R^n.
\end{cases}
\end{equation}
Since the term $\partial_{z}^{2}q_{j}(\cdot,u_{j}^{\bm{\epsilon}})(\partial_{\epsilon_{1}}u_{j}^{\bm{\epsilon}})(\partial_{\epsilon_{2}}u_{j}^{\bm{\epsilon}})$
is bounded in $\Omega_{T}$, using Proposition~\ref{prop:well-posed-suitable-regularity},
there exists a unique solution $
v_{j}^{\bm{\epsilon}}\in L^{\infty}(0,T;\tilde{H}^{s}(\Omega))\cap L^{\infty}(\mathbb{R}_{T}^{n})$
to \eqref{eq:formal-second-linearization} with 
\begin{align}
 \|v_{j}^{\bm{\epsilon}}\|_{L^{\infty}(0,T;\tilde{H}^{s}(\Omega))\cap L^{\infty}(\mathbb{R}_{T}^{n})}
 & \le C\|\partial_{z}^{2}q_{j}(\cdot,u_{j}^{\bm{\epsilon}})(\partial_{\epsilon_{1}}u_{j}^{\bm{\epsilon}})(\partial_{\epsilon_{2}}u_{j}^{\bm{\epsilon}})\|_{L^{\infty}(\Omega_{T})}\nonumber \\
 & \le CM_{2}\|g_{1}\|_{{\rm ext}}\|g_{2}\|_{{\rm ext}}. \qquad \text{(using \ref{itm:q4} and \eqref{eq:first-derivative-bound})} \label{eq:second-derivative-aux}
\end{align}
Again, $v_{j}^{\bm{\epsilon}}$ is temporary notation, which
will be dropped after showing $\partial_{\epsilon_{1}\epsilon_{2}}u_{j}^{\bm{\epsilon}}$
is well-defined. We emphasize that we have already dropped
$v_{j}^{\bm{\epsilon}}$, so this will not conflict with the one used in Section~\ref{subsec:First-order-linearization}. 
\begin{lem}
\label{lem:second-derivative-well-defined}There exists a constant
$\epsilon_{0}=\epsilon_{0}(n,s,\Omega,T,\delta,\bm{g},m)>0$ with
$0<\epsilon_{0}<\tilde{\epsilon}_{0}$, where $\tilde{\epsilon}_{0}$
is given in Proposition~{\rm \ref{prop:well-posedness-diffusion-nonlinear}},
such that for each $\bm{\epsilon}$ with $|\bm{\epsilon}|<\epsilon_{0}$,
we have 
\begin{equation}
\lim_{\epsilon_{2}\rightarrow0}\|v_{j}^{\bm{\epsilon}}-\delta_{\epsilon_{2}}\partial_{\epsilon_{1}}u_{j}^{\bm{\epsilon}}\|_{L^{\infty}(0,T;H^{s}(\mathbb{R}^{n}))\cap L^{\infty}(\mathbb{R}_{T}^{n})}=0,\label{eq:second-derivative-well-defined}
\end{equation}
where $
\delta_{\epsilon_{2}}\partial_{\epsilon_{1}}u_{j}^{\bm{\epsilon}}=\frac{\partial_{\epsilon_{1}}u_{j}^{\bm{\epsilon}+\epsilon_{2}\bm{e}_{2}}-\partial_{\epsilon_{1}}u_{j}^{\bm{\epsilon}}}{\epsilon_{2}}\quad\text{in}\;\;\Omega_{T},$
provided $|\bm{\epsilon}|+|\epsilon_{2}|<\epsilon_{0}$. 
\end{lem}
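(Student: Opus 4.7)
My plan is to mirror the strategy of Lemma~\ref{lem:first-derivative-well-defined}. Set $w := v_{j}^{\bm{\epsilon}} - \delta_{\epsilon_{2}} \partial_{\epsilon_{1}} u_{j}^{\bm{\epsilon}}$. Subtracting the first-order linearized equation \eqref{eq:formal-first-linearization} at $\bm{\epsilon}$ from the same equation at $\bm{\epsilon} + \epsilon_{2}\bm{e}_{2}$ and dividing by $\epsilon_{2}$ gives a PDE for $\delta_{\epsilon_{2}}\partial_{\epsilon_{1}}u_{j}^{\bm{\epsilon}}$; subtracting this from \eqref{eq:formal-second-linearization} shows that $w$ solves $(\partial_{t}+(-\Delta)^{s})w = \mathcal{G}_{2}$ in $\Omega_{T}$ with zero exterior and initial data, where
\[
-\mathcal{G}_{2} = \partial_{z}q_{j}(\cdot,u_{j}^{\bm{\epsilon}})v_{j}^{\bm{\epsilon}} + \partial_{z}^{2}q_{j}(\cdot,u_{j}^{\bm{\epsilon}})(\partial_{\epsilon_{1}}u_{j}^{\bm{\epsilon}})(\partial_{\epsilon_{2}}u_{j}^{\bm{\epsilon}}) - \frac{\partial_{z}q_{j}(\cdot,u_{j}^{\bm{\epsilon}+\epsilon_{2}\bm{e}_{2}})\partial_{\epsilon_{1}}u_{j}^{\bm{\epsilon}+\epsilon_{2}\bm{e}_{2}} - \partial_{z}q_{j}(\cdot,u_{j}^{\bm{\epsilon}})\partial_{\epsilon_{1}}u_{j}^{\bm{\epsilon}}}{\epsilon_{2}}.
\]
Proposition~\ref{prop:well-posed-suitable-regularity} then gives $\|w\|_{L^{\infty}(0,T;H^{s}(\mathbb{R}^{n}))\cap L^{\infty}(\mathbb{R}_{T}^{n})} \le \tilde{C}\|\mathcal{G}_{2}\|_{L^{\infty}(\Omega_{T})}$, so it suffices to show the right-hand side tends to $0$ as $\epsilon_{2}\to 0$.

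Next I decompose the quotient in $\mathcal{G}_{2}$ via the splitting
\[
\partial_{z}q_{j}(\cdot,u_{j}^{\bm{\epsilon}+\epsilon_{2}\bm{e}_{2}})\partial_{\epsilon_{1}}u_{j}^{\bm{\epsilon}+\epsilon_{2}\bm{e}_{2}} - \partial_{z}q_{j}(\cdot,u_{j}^{\bm{\epsilon}})\partial_{\epsilon_{1}}u_{j}^{\bm{\epsilon}} = A_{\epsilon_{2}}\,\partial_{\epsilon_{1}}u_{j}^{\bm{\epsilon}+\epsilon_{2}\bm{e}_{2}} + \partial_{z}q_{j}(\cdot,u_{j}^{\bm{\epsilon}})\big(\partial_{\epsilon_{1}}u_{j}^{\bm{\epsilon}+\epsilon_{2}\bm{e}_{2}} - \partial_{\epsilon_{1}}u_{j}^{\bm{\epsilon}}\big),
\]
where the mean value theorem in the $z$-variable gives $A_{\epsilon_{2}} = \partial_{z}^{2}q_{j}(\cdot, \xi_{\epsilon_{2}})\,(u_{j}^{\bm{\epsilon}+\epsilon_{2}\bm{e}_{2}} - u_{j}^{\bm{\epsilon}})$ with $\xi_{\epsilon_{2}}$ between $u_{j}^{\bm{\epsilon}}$ and $u_{j}^{\bm{\epsilon}+\epsilon_{2}\bm{e}_{2}}$. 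Dividing by $\epsilon_{2}$ and further adding and subtracting $v_{j}^{\bm{\epsilon}}$ in the second term produces the crucial absorbable piece $-\partial_{z}q_{j}(\cdot, u_{j}^{\bm{\epsilon}})\, w$. Using $\|\partial_{z}q_{j}(\cdot,u_{j}^{\bm{\epsilon}})\|_{L^{\infty}(\Omega_{T})}\le\Phi(C|\bm{\epsilon}|)$ from \ref{itm:q3}, after shrinking $\epsilon_{0}$ so that $\tilde{C}\,\Phi(C\epsilon_{0}) \le \tfrac{1}{2}$, this piece is absorbed into the left-hand side, yielding $\|w\| \le 2\tilde{C}\|\mathcal{R}\|_{L^{\infty}(\Omega_{T})}$, where $\mathcal{R}$ collects all remaining terms.

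Finally, I show $\|\mathcal{R}\|_{L^{\infty}(\Omega_{T})}\to 0$ as $\epsilon_{2}\to 0$. The terms of $\mathcal{R}$ are: (i) $\partial_{z}^{2}q_{j}(\cdot,\xi_{\epsilon_{2}})\frac{u_{j}^{\bm{\epsilon}+\epsilon_{2}\bm{e}_{2}} - u_{j}^{\bm{\epsilon}}}{\epsilon_{2}}\partial_{\epsilon_{1}}u_{j}^{\bm{\epsilon}+\epsilon_{2}\bm{e}_{2}} - \partial_{z}^{2}q_{j}(\cdot,u_{j}^{\bm{\epsilon}})(\partial_{\epsilon_{1}}u_{j}^{\bm{\epsilon}})(\partial_{\epsilon_{2}}u_{j}^{\bm{\epsilon}})$, controlled by combining \ref{itm:q4} (uniform $M_{2}$-bound), Lemma~\ref{lem:uj-epsilon1-continuity} (so $\xi_{\epsilon_{2}}\to u_{j}^{\bm{\epsilon}}$ in $L^{\infty}$), continuity of $\partial_{z}^{2}q_{j}$ in $z$ from \ref{itm:q1}, and Lemma~\ref{lem:first-derivative-well-defined} (so the difference quotient $(u_{j}^{\bm{\epsilon}+\epsilon_{2}\bm{e}_{2}}-u_{j}^{\bm{\epsilon}})/\epsilon_{2}$ converges to $\partial_{\epsilon_{2}}u_{j}^{\bm{\epsilon}}$ in $L^{\infty}$); and (ii) $\partial_{z}q_{j}(\cdot,u_{j}^{\bm{\epsilon}})\bigl(\partial_{\epsilon_{1}}u_{j}^{\bm{\epsilon}+\epsilon_{2}\bm{e}_{2}} - \partial_{\epsilon_{1}}u_{j}^{\bm{\epsilon}}\bigr)/\epsilon_{2}$-type contributions, which after rearrangement require the continuity of the map $\bm{\epsilon}\mapsto \partial_{\epsilon_{1}}u_{j}^{\bm{\epsilon}}$ in $L^{\infty}$.

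The main obstacle is exactly this last continuity statement for the first-derivative family, which is not stated in the excerpt. I would establish it as an auxiliary sublemma, modeled on Lemma~\ref{lem:uj-epsilon1-continuity}: set $h := \partial_{\epsilon_{1}}u_{j}^{\bm{\epsilon}+\bm{\theta}} - \partial_{\epsilon_{1}}u_{j}^{\bm{\epsilon}}$, derive from \eqref{eq:formal-first-linearization} that $h$ solves $(\partial_{t}+(-\Delta)^{s}+\partial_{z}q_{j}(\cdot,u_{j}^{\bm{\epsilon}+\bm{\theta}}))h = [\partial_{z}q_{j}(\cdot,u_{j}^{\bm{\epsilon}}) - \partial_{z}q_{j}(\cdot,u_{j}^{\bm{\epsilon}+\bm{\theta}})]\partial_{\epsilon_{1}}u_{j}^{\bm{\epsilon}}$ with zero exterior/initial data, and apply Proposition~\ref{prop:well-posed-suitable-regularity} together with \ref{itm:q4} and Lemma~\ref{lem:uj-epsilon1-continuity} to conclude $\|h\|_{L^{\infty}(\Omega_{T})}\to 0$ as $|\bm{\theta}|\to 0$. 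The remaining work is careful bookkeeping to ensure every near-quantity is estimated against a uniform $L^{\infty}$-bound coming from \eqref{eq:first-derivative-bound} and \eqref{eq:second-derivative-aux}.
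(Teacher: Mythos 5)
Your approach matches the paper's: you derive the same equation for $w = v_{j}^{\bm{\epsilon}} - \delta_{\epsilon_{2}}\partial_{\epsilon_{1}}u_{j}^{\bm{\epsilon}}$ with right-hand side $\mathcal{G}_{2}$, split off the absorbable piece $\partial_{z}q_{j}(\cdot,u_{j}^{\bm{\epsilon}})w$ (the paper's $\mathcal{G}_{21}$), and reduce to showing the remainder vanishes in $L^\infty$ via the mean value theorem, the bound \eqref{eq:first-derivative-aux}, and Lemmas~\ref{lem:uj-epsilon1-continuity} and \ref{lem:first-derivative-well-defined}; your $\mathcal{R}$ is exactly the paper's $\mathcal{G}_{22}+\mathcal{G}_{23}$ regrouped. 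Your plan is correct, and in one respect it is more careful than the paper: the paper dispatches $\mathcal{G}_{23}$ (which contains $\partial_{\epsilon_{1}}u_{j}^{\bm{\epsilon}} - \partial_{\epsilon_{1}}u_{j}^{\bm{\epsilon}+\epsilon_{2}\bm{e}_{2}}$) with a terse citation of ``the mean value theorem and Lemma~\ref{lem:first-derivative-well-defined},'' but what is actually needed is $L^\infty$-continuity of $\bm{\epsilon}\mapsto\partial_{\epsilon_{1}}u_{j}^{\bm{\epsilon}}$, which is not the content of that lemma. You correctly identify this and supply a legitimate sublemma (mirror of Lemma~\ref{lem:uj-epsilon1-continuity} applied to the first-order linearized system); that fills a genuine gap in the paper's exposition. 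One small point to keep in mind while executing: when you argue $\partial_{z}^{2}q_{j}(\cdot,\xi_{\epsilon_{2}})\to\partial_{z}^{2}q_{j}(\cdot,u_{j}^{\bm{\epsilon}})$ uniformly in $(t,x)$, invoke \ref{itm:q4} for $k=3$ (using $m\ge2$) to get a uniform Lipschitz bound on $\partial_{z}^{2}q_{j}$ in the $z$-variable rather than mere pointwise continuity from \ref{itm:q1}, so the convergence is indeed in $L^\infty(\Omega_{T})$.
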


\begin{proof}
Let $\epsilon_{2}$ satisfies $|\epsilon_{2}|\le|\bm{\epsilon}|$
and $|\bm{\epsilon}|+|\epsilon_{2}|<\epsilon_{0}$. Note that 
\[
\begin{cases}
(\partial_{t}+(-\Delta)^{s})(v_{j}^{\bm{\epsilon}}-\delta_{\epsilon_{2}}\partial_{\epsilon_{1}}u_{j}^{\bm{\epsilon}})=\mathcal{G}_{2} & \text{in}\;\;\Omega_{T},\\
v_{j}^{\bm{\epsilon}}-\delta_{\epsilon_{2}}\partial_{\epsilon_{1}}u_{j}^{\bm{\epsilon}}=0 & \text{in}\;\;\Omega_{T}^{e} \;\; \text{and on} \;\; \{0\} \times \mathbb R^n,
\end{cases}
\]
where 
\begin{align*}
-\mathcal{G}_{2} & =\partial_{z}q_{j}(\cdot,u_{j}^{\bm{\epsilon}})v_{j}^{\bm{\epsilon}}+\partial_{z}^{2}q_{j}(\cdot,u_{j}^{\bm{\epsilon}})(\partial_{\epsilon_{1}}u_{j}^{\bm{\epsilon}})(\partial_{\epsilon_{2}}u_{j}^{\bm{\epsilon}})\\
 & \quad-\frac{\partial_{z}q_{j}(\cdot,u_{j}^{\bm{\epsilon}+\epsilon_{2}\bm{e}_{2}})\partial_{\epsilon_{1}}u_{j}^{\bm{\epsilon}+\epsilon_{2}\bm{e}_{2}}-\partial_{z}q_{j}(\cdot,u_{j}^{\bm{\epsilon}})\partial_{\epsilon_{1}}u_{j}^{\bm{\epsilon}}}{\epsilon_{2}}.
\end{align*}
After some computation we can write $
-\mathcal{G}_{2}=\mathcal{G}_{21}+\mathcal{G}_{22}+\mathcal{G}_{23},$
where
\begin{align*}
\begin{cases}
\mathcal{G}_{21}  =\partial_{z}q_{j}(\cdot,u_{j}^{\bm{\epsilon}})\big[v_{j}^{\bm{\epsilon}}-\delta_{\epsilon_{2}}\partial_{\epsilon_{1}}u_{j}^{\bm{\epsilon}}\big],\\
\mathcal{G}_{22}  =\big[\partial_{z}^{2}q_{j}(\cdot,u_{j}^{\bm{\epsilon}})(\partial_{\epsilon_{2}}u_{j}^{\bm{\epsilon}})-\frac{\partial_{z}q_{j}(\cdot,u_{j}^{\bm{\epsilon}+\epsilon_{2}\bm{e}_{2}})-\partial_{z}q_{j}(\cdot,u_{j}^{\bm{\epsilon}})}{\epsilon_{2}}\big](\partial_{\epsilon_{1}}u_{j}^{\bm{\epsilon}+\epsilon_{2}\bm{e}_{2}}),\\
\mathcal{G}_{23}  =\partial_{z}^{2}q_{j}(\cdot,u_{j}^{\bm{\epsilon}})\partial_{\epsilon_{2}}u_{j}^{\bm{\epsilon}}\big[\partial_{\epsilon_{1}}u_{j}^{\bm{\epsilon}}-\partial_{\epsilon_{1}}u_{j}^{\bm{\epsilon}+\epsilon_{2}\bm{e}_{2}}\big],
\end{cases}
\end{align*}
Note that 
$
\|v_{j}^{\bm{\epsilon}}-\delta_{\epsilon_{2}}\partial_{\epsilon_{1}}u_{j}^{\bm{\epsilon}}\|_{L^{\infty}(0,T;H^{s}(\mathbb{R}^{n}))\cap L^{\infty}(\mathbb{R}_{T}^{n})}\le C\|\mathcal{G}_{2}\|_{L^{\infty}(\Omega_{T})}.$
Possibly choosing a smaller $\epsilon_{0}$, we have 
$
\|\mathcal{G}_{21}\|_{L^{\infty}(\Omega_{T})}\le\frac{1}{2}\|v_{j}^{\bm{\epsilon}}-\delta_{\epsilon_{2}}\partial_{\epsilon_{1}}u_{j}^{\bm{\epsilon}}\|_{L^{\infty}(0,T;H^{s}(\mathbb{R}^{n}))\cap L^{\infty}(\mathbb{R}_{T}^{n})}.$Using the mean value theorem and Lemma~\ref{lem:first-derivative-well-defined},
we know that 
$
\lim_{\epsilon_{2}\rightarrow0} (\|\mathcal{G}_{22}\|_{L^{\infty}(\Omega_{T})} + \|\mathcal{G}_{23}\|_{L^{\infty}(\Omega_{T})}) = 0$.
We then conclude \eqref{eq:second-derivative-well-defined} by using arguments similar to Lemma~\ref{lem:first-derivative-well-defined}.
\end{proof}

Akin to Lemma~\ref{lem:information-first-linearized-DN-map}, we next demonstrate the following lemma.
 
\begin{lem}
\label{lem:information-second-linearized-DN-map}If $\Lambda_{q_{1}}(f)=\Lambda_{q_{1}}(f)$
for all $f\in\mathcal{C}_{c}^{\infty}(W_{T})$ with $\|f\|_{{\rm ext}}\le\tilde{\epsilon}_{0}$,
where $\tilde{\epsilon}_{0}$ is the constant given in Proposition~{\rm \ref{prop:well-posedness-diffusion-nonlinear}},
then there exists a constant $\epsilon_{0}=\epsilon_{0}(n,s,\Omega,T,\delta,\bm{g},m)>0$
with $0<\epsilon_{0}<\tilde{\epsilon}_{0}$ such that 
\begin{equation}
(-\Delta)^{s}\partial_{\epsilon_{1}\epsilon_{2}}u_{1}^{\bm{\epsilon}}\big|_{V_{T}}=(-\Delta)^{s}\partial_{\epsilon_{1}\epsilon_{2}}u_{2}^{\bm{\epsilon}}\big|_{V_{T}} \label{eq:information-second-linearized-DN-map}\quad \forall \; \bm{\epsilon}\; \mbox{with}\; |\bm{\epsilon}|\le\epsilon_{0}.
\end{equation}
 
\end{lem}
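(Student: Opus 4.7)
The plan is to mimic the strategy of Lemma~\ref{lem:information-first-linearized-DN-map}, but one layer up, passing from the first-order linearized DN-map to the second-order one via a difference quotient in the $\epsilon_2$ direction. The key input is that Lemma~\ref{lem:information-first-linearized-DN-map} already furnishes the equality of $(-\Delta)^{s}\partial_{\epsilon_1}u_j^{\bm{\epsilon}}|_{V_T}$ on a whole open neighborhood of $\bm{\epsilon}=0$, not only at one point, so we may apply it simultaneously at $\bm{\epsilon}$ and at $\bm{\epsilon}+\epsilon_2 \bm{e}_2$ and subtract.

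First, I would shrink $\epsilon_0$ (if necessary) so that both Lemma~\ref{lem:information-first-linearized-DN-map} and Lemma~\ref{lem:second-derivative-well-defined} are simultaneously applicable, and fix any $\bm{\epsilon}$ with $|\bm{\epsilon}|<\epsilon_0$ together with $\epsilon_2$ with $|\bm{\epsilon}|+|\epsilon_2|<\epsilon_0$. By Lemma~\ref{lem:information-first-linearized-DN-map} applied at both parameter values, combined with the linearity of $(-\Delta)^{s}$, I obtain
\[
(-\Delta)^{s}\delta_{\epsilon_2}\partial_{\epsilon_1}u_{1}^{\bm{\epsilon}}\big|_{V_T}
= (-\Delta)^{s}\delta_{\epsilon_2}\partial_{\epsilon_1}u_{2}^{\bm{\epsilon}}\big|_{V_T}.
\]
Then, using the continuity of $(-\Delta)^{s}:H^{s}(\mathbb{R}^{n})\to H^{-s}(\mathbb{R}^{n})$, I estimate
\[
\bigl\|(-\Delta)^{s}\bigl(v_j^{\bm{\epsilon}}-\delta_{\epsilon_2}\partial_{\epsilon_1}u_j^{\bm{\epsilon}}\bigr)\bigr\|_{L^{\infty}(0,T;H^{-s}(V))}
\le C\bigl\|v_j^{\bm{\epsilon}}-\delta_{\epsilon_2}\partial_{\epsilon_1}u_j^{\bm{\epsilon}}\bigr\|_{L^{\infty}(0,T;H^{s}(\mathbb{R}^{n}))},
\]
which, by Lemma~\ref{lem:second-derivative-well-defined}, tends to $0$ as $\epsilon_2\to 0$. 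Identifying $v_j^{\bm{\epsilon}}$ with $\partial_{\epsilon_1\epsilon_2}u_j^{\bm{\epsilon}}$ (well-defined by that same lemma), the triangle inequality then yields \eqref{eq:information-second-linearized-DN-map}.

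There is essentially no new obstacle compared with the first-order case: the nonlinear features of $q$ have already been absorbed into the bound \eqref{eq:second-derivative-aux} and the convergence \eqref{eq:second-derivative-well-defined}, so the present step is purely a continuity argument combined with the linearity of the fractional Laplacian. The only point that requires a small amount of care is to verify that $\epsilon_0$ can be chosen uniformly, i.e.\ independent of the particular $\bm{\epsilon}$ being differentiated at, so that the first-order equality may be applied at both $\bm{\epsilon}$ and $\bm{\epsilon}+\epsilon_2 \bm{e}_2$; this is automatic from how $\epsilon_0$ is selected in Lemmas~\ref{lem:first-derivative-well-defined} and~\ref{lem:second-derivative-well-defined}, both of which depend only on $n,s,\Omega,T,\delta,\bm{g},m$.
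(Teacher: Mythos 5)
Your proof is correct and follows essentially the same route as the paper: it applies Lemma~\ref{lem:information-first-linearized-DN-map} at both $\bm{\epsilon}$ and $\bm{\epsilon}+\epsilon_2\bm{e}_2$, forms the difference quotient in $\epsilon_2$, and passes to the limit using the boundedness of $(-\Delta)^{s}:H^{s}(\mathbb{R}^{n})\to H^{-s}(\mathbb{R}^{n})$ together with the convergence in Lemma~\ref{lem:second-derivative-well-defined}. This is precisely the argument the paper's proof alludes to when it says to repeat the Lemma~\ref{lem:information-first-linearized-DN-map} argument with Lemma~\ref{lem:second-derivative-well-defined} and then conclude via Lemma~\ref{lem:information-first-linearized-DN-map}.
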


\begin{proof}
Using similar arguments as in Lemma~\ref{lem:information-first-linearized-DN-map}
(with Lemma~\ref{lem:second-derivative-well-defined}), we can show
that \eqref{eq:information-first-linearized-DN-map} implies \eqref{eq:information-second-linearized-DN-map}.
Then we conclude the lemma by Lemma~\ref{lem:information-first-linearized-DN-map}. 
\end{proof}
\begin{proof}
[Proof of Theorem~{\rm \ref{thm:main-diffusion}} for $m=2$] Using
\ref{itm:q3} and \eqref{eq:first-derivative-epsilon-zero}, we know that
$\partial_{\epsilon_{1}\epsilon_{2}}u_{j}^{\bm{\epsilon}}|_{\bm{\epsilon}=0}$
satisfies 
\[
\begin{cases}
\begin{array}{l}
(\partial_{t}+(-\Delta)^{s})(\partial_{\epsilon_{1}\epsilon_{2}}u_{j}^{\bm{\epsilon}}|_{\bm{\epsilon}=0})\\
\quad+\partial_{z}^{2}q_{j}(\cdot,0)(\partial_{\epsilon_{1}}u^{\bm{\epsilon}}|_{\bm{\epsilon}=0})(\partial_{\epsilon_{2}}u^{\bm{\epsilon}}|_{\bm{\epsilon}=0})=0
\end{array} & \text{in}\;\;\Omega_{T},\\
\partial_{\epsilon_{1}\epsilon_{2}}u_{j}^{\bm{\epsilon}}|_{\bm{\epsilon}=0}=0 & \text{in}\;\;\Omega_{T}^{e} \;\; \text{and on} \;\; \{0\} \times \mathbb R^n.
\end{cases}
\]
Hence, we know that $v:=\partial_{\epsilon_{1}\epsilon_{2}}u_{1}^{\bm{\epsilon}}|_{\bm{\epsilon}=0}-\partial_{\epsilon_{1}\epsilon_{2}}u_{2}^{\bm{\epsilon}}|_{\bm{\epsilon}=0}$
satisfies
\[
\begin{cases}
(\partial_{t}+(-\Delta)^{s})v+(\partial_{z}^{2}q_{1}(\cdot,0)-\partial_{z}^{2}q_{2}(\cdot,0))(\partial_{\epsilon_{1}}u^{\bm{\epsilon}}|_{\bm{\epsilon}=0})(\partial_{\epsilon_{2}}u^{\bm{\epsilon}}|_{\bm{\epsilon}=0})=0 \quad \text{in}\;\;\Omega_{T},\\
v=0 \qquad \text{in}\;\;\Omega_{T}^{e} \;\; \text{and on} \;\; \{0\} \times \mathbb R^n.
\end{cases}
\]
From Lemma~\ref{lem:information-second-linearized-DN-map}, we know
that 
\(
(-\Delta)^{s}v\big|_{V_{T}}=0.
\)
Since $v=0$ in $V_{T}$, using the unique continuation property of the
fractional Laplacian in Lemma~\ref{lem:UCP}, we conclude that $v\equiv0$.
Therefore, we know that 
\begin{equation}
\big( \partial_{z}^{2} q_{1}(\cdot,0) - \partial_{z}^{2} q_{2}(\cdot,0) \big) (\partial_{\epsilon_{1}} u^{\bm{\epsilon}} |_{\bm{\epsilon}=0}) (\partial_{\epsilon_{2}} u^{\bm{\epsilon}} |_{\bm{\epsilon}=0}) = 0. \label{eq:m2-to-recover}
\end{equation}
Since $g_{1},g_{2}\in\mathcal{C}_{c}^{\infty}(W_{T})$ are arbitrary,
using \eqref{eq:first-derivative-epsilon-zero-equation} and the Runge
approximation for fractional diffusion equation in Proposition~\ref{prop:Runge-diffusion},
we conclude $\partial_{z}^{2}q_{1}(\cdot,0)-\partial_{z}^{2}q_{2}(\cdot,0)=0$ in $\Omega_{T}$,
which proves Theorem~\ref{thm:main-diffusion} for $m=2$.
\end{proof}

\begin{proof}[Proof of Corollary~{\rm \ref{cor:diffusion-m-measurement}} for $m=2$] \label{idea:single-measurement}
Using the same argument as above, we reach \eqref{eq:m2-to-recover}: 
\begin{equation}
(\partial_{z}^{2}q_{1}(\cdot,0)-\partial_{z}^{2}q_{2}(\cdot,0))(\partial_{\epsilon_{1}}u^{\bm{\epsilon}}|_{\bm{\epsilon}=0})(\partial_{\epsilon_{2}}u^{\bm{\epsilon}}|_{\bm{\epsilon}=0})=0. \label{eq:m2-to-recover-recall}
\end{equation}
Using \eqref{eq:first-derivative-epsilon-zero-equation}, the unique continuation property of the fractional Laplacian in Lemma~\ref{lem:UCP} and a simple contradiction argument, for each $x_{0} \in \Omega$, we can find a sequence $\{x_{k}\}_{k \in \mathbb{N}} \subset \Omega$ with $x_{k} \rightarrow x_{0}$ and a sequence $\{t_{k}\}_{k \in \mathbb{N}} \subset (0,T)$ such that
\[
\partial_{\epsilon_{1}}u^{\bm{\epsilon}}|_{\bm{\epsilon}=0}(t_{k},x_{k}) \neq 0 \quad \text{and} \quad \partial_{\epsilon_{2}}u^{\bm{\epsilon}}|_{\bm{\epsilon}=0}(t_{k},x_{k}) \neq 0.
\]
Therefore from \eqref{eq:m2-to-recover-recall} we know that $\partial_{z}^{2}q_{1}(x_{k},0) = \partial_{z}^{2}q_{2}(x_{k},0)$ (here we have assumed that $\partial_{z}^{2}q_{1}(\cdot,0)$ and $\partial_{z}^{2}q_{2}(\cdot,0)$ are independent of $t$).
Hence by continuity of $\partial_{z}^{2}q_{1}(\cdot,0)$, $\partial_{z}^{2}q_{2}(\cdot,0)$ and the arbitrariness of $x_{0} \in \Omega$, we conclude 
$
	\partial_{z}^{2}q_{1}(\cdot,0)-\partial_{z}^{2}q_{2}(\cdot,0)=0\;\;\text{in}\;\;\Omega, 
$
which proves Corollary~\ref{cor:diffusion-m-measurement} for $m=2$.
\end{proof}

\subsection{Higher order linearization}

For each $2\le p\le m$, we denote $\partial_{(p)}=\partial_{\epsilon_{1}}\cdots\partial_{\epsilon_{p}}$.
By repeating formal differentiations to the equation \eqref{eq:formal-second-linearization},
we obtain the following $p$-th order linearization
\[
\begin{cases}
(\partial_{t}+(-\Delta)^{s})\partial_{(p)}u_{j}^{\bm{\epsilon}}+\partial_{(p)}q_{j}(\cdot,u_{j}^{\bm{\epsilon}})=0 & \text{in}\;\;\Omega_{T},\\
\partial_{(p)}u_{j}^{\bm{\epsilon}}=0 & \text{in}\;\;\Omega_{T}^{e} \;\; \text{and on} \;\; \{0\} \times \mathbb R^n.
\end{cases}
\]
where we simply denote $\partial_{(p)}=\partial_{\epsilon_{1}}\cdots\partial_{\epsilon_{p}}$.
By induction, we can verify 
\begin{align*}
\partial_{(p)}q_{j}(\cdot,u_{j}^{\bm{\epsilon}}) & =\partial_{z}q_{j}(\cdot,u_{j}^{\bm{\epsilon}})\partial_{(p)}u_{j}^{\bm{\epsilon}}+\sum_{\ell=2}^{p-1}\partial_{z}^{\ell}q_{j}(\cdot,u_{j}^{\bm{\epsilon}})\mathcal{T}_{p}^{\ell}(u_{j}^{\bm{\epsilon}})+\partial_{z}^{p}q_{j}(\cdot,u_{j}^{\bm{\epsilon}})\prod_{\ell=1}^{p}\partial_{\epsilon_{\ell}}u_{j}^{\bm{\epsilon}},
\end{align*}
where $\mathcal{T}_{p}^{\ell}(u_{j}^{\bm{\epsilon}})$ is a generic
notation (in order $p$ linearization) signifying a combination of
the terms $\partial_{\bm{\epsilon}}^{\bm{\alpha}}u_{j}^{\bm{\epsilon}}$
with multi-index $\bm{\alpha}$ satisfying $1\le|\bm{\alpha}|\le p-1$.
The following facts can be proved using strong induction on $m$: 
\begin{enumerate}
\item Functions $\partial_{(p)}u_{j}^{\bm{\epsilon}}\in L^{\infty}(0,T;H^{s}(\mathbb{R}^{n}))\cap L^{\infty}(\mathbb{R}_{T}^{n})$
are well-defined for each $2\le p\le m$.\smallskip 
\item There exists $\epsilon_{0}=\epsilon_{0}(n,s,\Omega,T,\delta,\bm{g},m)>0$
with $0<\epsilon_{0}<\tilde{\epsilon}_{0}$, where $\tilde{\epsilon}_{0}$
is the constant given in {\rm Proposition~\ref{prop:well-posedness-diffusion-nonlinear}},
such that 
\[
\lim_{\epsilon_{p}\rightarrow0}\|\partial_{(p)}u_{j}^{\bm{\epsilon}}-\delta_{\epsilon_{p}}\partial_{(p-1)}u_{j}^{\bm{\epsilon}}\|_{L^{\infty}(0,T;H^{s}(\mathbb{R}^{n}))\cap L^{\infty}(\mathbb{R}_{T}^{n})}=0\quad\text{for all}\;\;2\le p\le m,
\]
where 
$
\delta_{\epsilon_{p}}\partial_{(p-1)}u_{j}^{\bm{\epsilon}}=\frac{\partial_{(p-1)}u_{j}^{\bm{\epsilon}+\epsilon_{p}\bm{e}_{p}}-\partial_{(p-1)}u_{j}^{\bm{\epsilon}}}{\epsilon_{p}}\quad\text{in}\;\;\Omega_{T},
$
provided $|\bm{\epsilon}|+|\epsilon_{p}|<\epsilon_{0}$.\smallskip 
\item Moreover, if $\Lambda_{q_{1}}(f)=\Lambda_{q_{1}}(f)$ for all $f\in\mathcal{C}_{c}^{\infty}(W_{T})$
with $\|f\|_{{\rm ext}}\le\tilde{\epsilon}_{0}$, then we have 
\begin{equation}
(-\Delta)^{s}\partial_{(p)}u_{1}^{\bm{\epsilon}}\big|_{V_{T}}=(-\Delta)^{s}\partial_{(p)}u_{2}^{\bm{\epsilon}}\big|_{V_{T}}\quad\text{for all}\;\;2\le p\le m.\label{eq:linearized-DN}
\end{equation}
\end{enumerate}
Using the observations above, we are now ready to prove our main result. 
\begin{proof}
[Proof of Theorem~\ref{thm:main-diffusion}] We prove by induction on $m$. We assume the following hypothesis:
\begin{equation}
\partial_{z}^{p}q_{1}(\cdot,0)=\partial_{z}^{p}q_{1}(\cdot,0)\quad\text{for all}\;\;2\le p\le m-1.\label{eq:induction-hypothesis}
\end{equation}
Using \eqref{eq:first-derivative-epsilon-zero}, we see that 
\begin{align*}
\partial_{(m)}q_{j}(\cdot,u_{j}^{\bm{\epsilon}})|_{\bm{\epsilon}=0} & =\partial_{z}q_{j}(\cdot,0)\partial_{(m)}u_{j}^{\bm{\epsilon}}|_{\bm{\epsilon}=0}+\sum_{\ell=2}^{m-1}\partial_{z}^{\ell}q_{j}(\cdot,0)\mathcal{T}_{m}^{\ell}(u_{j}^{\bm{\epsilon}})\big|_{\bm{\epsilon}=0}+\partial_{z}^{m}q_{j}(\cdot,0)\prod_{\ell=1}^{m}\partial_{\epsilon_{\ell}}u^{\bm{\epsilon}}|_{\bm{\epsilon}=0}.
\end{align*}
Using \eqref{eq:induction-hypothesis}, we see that 
$
\sum_{\ell=2}^{m-1}\partial_{z}^{\ell}q_{j}(\cdot,0)\mathcal{T}_{m}^{\ell}(u_{1}^{\bm{\epsilon}}) \big|_{\bm{\epsilon}=0}
= \sum_{\ell=2}^{m-1}\partial_{z}^{\ell}q_{j}(\cdot,0)\mathcal{T}_{m}^{\ell}(u_{2}^{\bm{\epsilon}}) \big|_{\bm{\epsilon}=0}.$
Hence, we know that $v:=\partial_{(m)}u_{1}^{\bm{\epsilon}}|_{\bm{\epsilon}=0}-\partial_{(m)}u_{2}^{\bm{\epsilon}}|_{\bm{\epsilon}=0}$
satisfies 
\[
\begin{cases}
(\partial_{t}+(-\Delta)^{s})v+(\partial_{z}^{m}q_{1}(\cdot,0)-\partial_{z}^{m}q_{2}(\cdot,0))\prod_{\ell=1}^{m}\partial_{\epsilon_{\ell}}u^{\bm{\epsilon}}|_{\bm{\epsilon}=0}=0 & \text{in}\;\;\Omega_{T} \\
v=0 & \text{in}\;\;\Omega_{T}^{e} \;\; \text{and on} \;\; \{0\} \times \mathbb R^n.
\end{cases}
\]
Using \eqref{eq:linearized-DN}, we know that $
(-\Delta)^{s}v |_{V_{T}}=0.$
Since $v=0$ in $V_{T}$, by using the unique continuation principle of the fractional Laplacian (see Lemma~\ref{lem:UCP}), we conclude that $v\equiv0$.
Hence, we know that $(\partial_{z}^{m}q_{1}(\cdot,0)-\partial_{z}^{m}q_{2}(\cdot,0))\prod_{\ell=1}^{m}\partial_{\epsilon_{\ell}}u^{\bm{\epsilon}}|_{\bm{\epsilon}=0}=0\;\;\text{in}\;\;\Omega_{T}.$
Since $g_{1},\cdots,g_{m}\in\mathcal{C}_{c}^{\infty}(W_{T})$ are
arbitrary, using \eqref{eq:first-derivative-epsilon-zero-equation}
and the Runge approximation for the fractional diffusion equation proved in Proposition~\ref{prop:Runge-diffusion},
we conclude 
$\partial_{z}^{m}q_{1}(\cdot,0)=\partial_{z}^{m}q_{2}(\cdot,0)$ in $\Omega_{T}$.
This completes the proof of Theorem~\ref{thm:main-diffusion}. 
\end{proof}


\section{\label{sec:Analogous-result-for}Analogous result for the fractional
wave equation}

The following results can be proved by modifying the ideas in \cite[Corollary~2.2]{KLW21CalderonFractionalWave} (or \cite[Chapter~7]{Eva10PDE}),
which we  use later to prove the well-posedness of \eqref{eq:main-wave}
with small exterior data and to solve inverse problem as well. 

\begin{lem} \label{lem:wave-wellposedness}
Given any $n\in\mathbb{N}$ and $0<s<1$. Let $\Omega\subset\mathbb{R}^{n}$
be a bounded Lipschitz domain in $\mathbb{R}^{n}$, let $W\subset\Omega^{e}$
be any open set with Lipschitz boundary satisfying $\overline{W}\cap\overline{\Omega}=\emptyset$.
Let $a\in L^{\infty}(\Omega_{T})$. Then for any $F\in L^{2}(\Omega_{T})$,
$f\in\mathcal{C}_{c}^{\infty}(W_{T})$, $\psi\in\tilde{H}^{0}(\Omega)$,
$\varphi\in\tilde{H}^{s}(\Omega)$, there exists a unique solution
$u$ of 
\begin{equation}
\begin{cases}
(\partial_{t}^{2}+(-\Delta)^{s}+a)u=F & \text{in}\;\;\Omega_{T},\\
u=f & \text{in}\;\;\Omega_{T}^{e},\\
u=\varphi,\quad\partial_{t}u=\psi & \text{on}\;\;\{0\}\times\mathbb{R}^{n}.
\end{cases}\label{eq:linear-wave}
\end{equation}
satisfying 
\begin{align}
 & \|u-f\|_{L^{\infty}(0,T;H^{s}(\mathbb{R}^{n}))} + \|\partial_{t}u\|_{L^{\infty}(0,T;L^{2}(\Omega))} \nonumber \\
 & \le C (\|\varphi\|_{\tilde{H}^{s}(\Omega)} + \|\psi\|_{L^{2}(\Omega)} + \|F-(-\Delta)^{s}f\|_{L^{2}(\Omega_{T})} )\label{eq:wave-regularity}
\end{align}
for some constant $C=C(n,s,T,\|a\|_{L^{\infty}(\Omega_{T})})$. 
\end{lem}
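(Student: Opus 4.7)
The plan is to follow the standard Galerkin machinery for second-order hyperbolic problems, adapted to the nonlocal setting where ``$v = 0$ in $\Omega^{e}$'' replaces the usual Dirichlet boundary condition. First I would reduce to zero exterior data by setting $v := u - f$. Since $f \in \mathcal{C}_{c}^{\infty}(W_{T})$ and $\overline{W}\cap\overline{\Omega} = \emptyset$, we have $f \equiv 0$ in $\Omega_{T}$ (so in particular $\partial_{t}^{2}f = 0$ and $af = 0$ there), whence $v$ would solve
\[
\begin{cases}
(\partial_{t}^{2} + (-\Delta)^{s} + a)v = \tilde{F} := F - (-\Delta)^{s}f & \text{in}\;\;\Omega_{T},\\
v = 0 & \text{in}\;\;\Omega_{T}^{e},\\
v(0,\cdot) = \varphi,\quad \partial_{t}v(0,\cdot) = \psi & \text{on}\;\;\{0\}\times\mathbb{R}^{n},
\end{cases}
\]
with $\tilde{F} \in L^{2}(\Omega_{T})$ bounded by the right-hand side of \eqref{eq:wave-regularity}, and the compatibility $f(0,\cdot)\equiv 0$ (since $f$ is compactly supported in the open set $W_{T}$) ensuring the initial data are unchanged.

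Next I would set up a Galerkin scheme. Let $\{w_{k}\}_{k\ge 1} \subset \tilde{H}^{s}(\Omega)$ be an orthonormal basis of $\tilde{H}^{0}(\Omega)$ consisting of eigenfunctions of $(-\Delta)^{s}\colon \tilde{H}^{s}(\Omega)\to H^{-s}(\Omega)$; such a basis exists because the inverse of this isomorphism is a compact, self-adjoint operator on $\tilde{H}^{0}(\Omega)$ via Rellich. For each $m\in\mathbb{N}$ seek $v_{m}(t,x) = \sum_{k=1}^{m} d_{k}^{m}(t) w_{k}(x)$ with
\[
\bigl((v_{m})_{tt}(t),\phi\bigr)_{L^{2}(\Omega)} + \mathcal{B}[v_{m},\phi;t] = (\tilde{F}(t),\phi)_{L^{2}(\Omega)} \quad \forall\, \phi \in \mathrm{span}\{w_{1},\dots,w_{m}\},
\]
and with $v_{m}(0)$, $\partial_{t}v_{m}(0)$ the $L^{2}$-projections of $\varphi,\psi$ onto this span. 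This is a well-posed linear system of ODEs for the $d_{k}^{m}$ solvable on $[0,T]$.

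The heart of the argument is the hyperbolic energy identity obtained by testing with $\phi = \partial_{t}v_{m}(t)$:
\[
\tfrac{1}{2}\tfrac{\mathsf{d}}{\mathsf{d}t}\bigl(\|\partial_{t}v_{m}(t)\|_{L^{2}(\Omega)}^{2} + \|(-\Delta)^{s/2}v_{m}(t)\|_{L^{2}(\mathbb{R}^{n})}^{2}\bigr) = \bigl(\tilde{F} - a\, v_{m},\,\partial_{t}v_{m}\bigr)_{L^{2}(\Omega)}.
\]
An application of Cauchy--Schwarz, Young's inequality and Gr\"{o}nwall's lemma then gives the uniform bound
\[
\|\partial_{t}v_{m}\|_{L^{\infty}(0,T;L^{2}(\Omega))}^{2} + \|v_{m}\|_{L^{\infty}(0,T;\tilde{H}^{s}(\Omega))}^{2} \le C\bigl(\|\varphi\|_{\tilde{H}^{s}(\Omega)}^{2} + \|\psi\|_{L^{2}(\Omega)}^{2} + \|\tilde{F}\|_{L^{2}(\Omega_{T})}^{2}\bigr),
\]
with $C=C(n,s,T,\|a\|_{L^{\infty}(\Omega_{T})})$. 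Weak-$*$ compactness extracts a subsequence converging to a limit $v$; a standard argument identifies it as a weak solution satisfying the estimate, and uniqueness is obtained by applying the same energy identity to the difference of two solutions with zero data. Undoing the substitution $u = v + f$ yields \eqref{eq:wave-regularity}. The main technical subtlety is justifying that the limit $v$ actually attains the initial data and has enough regularity to test with $\partial_{t}v$ in the continuous formulation; this is the payoff of the Galerkin approach, since at finite-dimensional level $\partial_{t}v_{m}$ automatically lies in the admissible test space. Apart from this, the argument is a direct hyperbolic analogue of the parabolic case in Proposition~\ref{prop:well-posed-linear}.
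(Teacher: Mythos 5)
Your existence argument is exactly what the paper has in mind: it quotes this Galerkin approach from \cite{KLW21CalderonFractionalWave} (see also \cite{Eva10PDE}, Chapter~7) and only writes out the uniqueness proof, remarking that existence ``can be proved using exactly the same argument.'' Your reduction to zero exterior data, the eigenbasis, testing with $\partial_{t}v_{m}$ at the discrete level, Gr\"onwall, and passage to the limit are all in line with the cited sources.

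The gap is in your treatment of uniqueness. You assert that uniqueness ``is obtained by applying the same energy identity to the difference of two solutions with zero data,'' but this is precisely the step the paper singles out as needing extra care, and the energy identity you wrote down cannot be invoked for an arbitrary weak solution. To test the equation against $\partial_{t}v$ you would need $\partial_{t}v(t,\cdot)\in\tilde{H}^{s}(\Omega)$, whereas the regularity that survives passage to the limit is only $\partial_{t}v\in L^{\infty}(0,T;L^{2}(\Omega))$. You correctly flag this obstacle in your last paragraph (``at finite-dimensional level $\partial_{t}v_{m}$ automatically lies in the admissible test space''), but then do not explain how to overcome it for a general weak solution, which is exactly what uniqueness requires. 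The paper's proof circumvents it with the Ladyzhenskaya-type device: fix $\eta\in[0,T]$ and test the equation with
\[
v(t,\cdot):=\begin{cases}\displaystyle\int_{t}^{\eta}u(\tau,\cdot)\,\mathsf{d}\tau,& 0\le t\le\eta,\\[4pt] 0,& \eta\le t\le T,\end{cases}
\]
which does lie in $\tilde{H}^{s}(\Omega)$ at every time. Integrating by parts in $t$ (using $\partial_{t}v=-u$ on $[0,\eta]$ and the vanishing endpoint conditions) then yields a coercive identity in $u(\eta,\cdot)$ and $(-\Delta)^{s/2}v(0,\cdot)$. A second subtlety your sketch glosses over: with $a\in L^{\infty}(\Omega_{T})$ only, one cannot integrate the $a$-term by parts in time as Evans does (which would require $\partial_{t}a\in L^{\infty}$); the paper instead estimates $\int\int auv$ directly via $L^{\infty}$ bounds and the Hardy--Littlewood--Sobolev inequality, keeping $w(t,\cdot):=\int_{0}^{t}u$ as an auxiliary quantity, choosing a short time $T_{1}$ to absorb the cross term, and iterating on $[kT_{1},(k+1)T_{1}]$. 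Without some version of this argument, your proof of uniqueness does not close.
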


\begin{rem}
It is interesting to compare \eqref{eq:wave-regularity} with \eqref{eq:diffusion-regularity}: both solutions (wave and diffusion) have regularity $L^{\infty}(0,T;H^{s}(\mathbb{R}^{n}))$. In \cite[Corollary~2.2]{KLW21CalderonFractionalWave}, they only consider the case when $a$ is independent of time $t$.
The existence of solutions can be proved using exactly the same argument, but the proof of uniqueness result need extra care. In contrast to \cite[Chapter~7]{Eva10PDE}, here we do not assume the $W^{1,\infty}(0,T;L^{\infty}(\Omega))$ regularity for the coefficient $a$. 
\end{rem}

\begin{proof}[Proof of uniqueness result of Lemma~{\rm \ref{lem:wave-wellposedness}}]
Let $u\in L^{2}(0,T;H^{s}(\mathbb{R}^{n}))\cap H^{1}(0,T;L^{2}(\Omega))$ be the solution of 
\begin{equation} \label{eq:patch1-uniqueness1}
\begin{cases}
(\partial_{t}^{2}+(-\Delta)^{s}+a)u=0 & \text{in }\Omega_{T},\\
u=0 & \text{in }\Omega_{T}^{e},\\
u=\partial_{t}u=0 & \text{on }\{0\}\times\mathbb{R}^{n}.
\end{cases}
\end{equation}
We want to show that $u\equiv0$. Fix $0\le\eta\le T$ and set
\[
v(t,\cdot):=\begin{cases}
\int_{t}^{\eta}u(\tau,\cdot)\,\mathsf{d}\tau & \text{if }0\le t\le\eta,\\
0 & \text{if }\eta\le t\le T.
\end{cases}
\]
Then $v(t,\cdot)\in\tilde{H}^{s}(\Omega)$ for each $0\le t\le T$, and so 
\begin{equation} \label{eq:patch1-uniqueness2}
\int_{\Omega}\int_{0}^{\eta}(\partial_{t}^{2}u)v \,\mathsf{d}t \,\mathsf{d}x + \int_{\mathbb{R}^{n}}\int_{0}^{\eta}(-\Delta)^{\frac{s}{2}}u(-\Delta)^{\frac{s}{2}}v \,\mathsf{d}t \,\mathsf{d}x + \int_{\Omega}\int_{0}^{\eta}auv \,\mathsf{d}t \,\mathsf{d}x = 0.
\end{equation}
Since $\partial_{t}u(0)=v(\eta)=0$ and $\partial_{t}v=-u$ for all $0\le t\le\eta$, we see that 
\begin{align}
\int_{\Omega}\int_{0}^{\eta}(\partial_{t}^{2}u)v \,\mathsf{d}t \,\mathsf{d}x
& =-\int_{\Omega}\int_{0}^{\eta}(\partial_{t}u)(\partial_{t}v) \,\mathsf{d}t \,\mathsf{d}x
= \int_{\Omega}\int_{0}^{\eta}(\partial_{t}u)u \,\mathsf{d}t \,\mathsf{d}x \nonumber \\
& = \frac{1}{2} \frac{d}{dt} \int_{\Omega}\int_{0}^{\eta}|u|^{2} \,\mathsf{d}t \,\mathsf{d}x
= \frac{1}{2} \|u(\eta,\cdot)\|_{L^{2}(\Omega)}^{2}. \label{eq:patch1-uniqueness3}
\end{align}
Using the fact $\partial_{t}v=-u$ for all $0\le t\le\eta$, we also have
\begin{align}
\int_{\mathbb{R}^{n}}\int_{0}^{\eta}(-\Delta)^{\frac{s}{2}}u & (-\Delta)^{\frac{s}{2}}v \,\mathsf{d}t \,\mathsf{d}x
= -\int_{\mathbb{R}^{n}}\int_{0}^{\eta}\partial_{t} \big( (-\Delta)^{\frac{s}{2}}v \big) (-\Delta)^{\frac{s}{2}}v \,\mathsf{d}t \,\mathsf{d}x \nonumber \\
& =-\frac{1}{2}\int_{\mathbb{R}^{n}}\int_{0}^{\eta}\frac{d}{dt}|(-\Delta)^{\frac{s}{2}}v|^{2} \,\mathsf{d}t \,\mathsf{d}x
=\frac{1}{2}\|(-\Delta)^{\frac{s}{2}}v(0,\cdot)\|_{L^{2}(\mathbb{R}^{n})}^{2}.\label{eq:patch1-uniqueness4}
\end{align}
Since $a\in L^{\infty}(\Omega_{T})$, combining \eqref{eq:patch1-uniqueness2}, \eqref{eq:patch1-uniqueness3} and \eqref{eq:patch1-uniqueness4}, together with the Hardy-Littlewood-Sobolev inequality \eqref{eq:Hardy-Littlewood-Sobolev}, we obtain 
\begin{equation} \label{eq:patch1-uniquenss5}
\|v(0,\cdot)\|_{\tilde{H}^{s}(\Omega)}^{2}+\|u(\eta,\cdot)\|_{L^{2}(\Omega)}^{2}\le C\int_{0}^{\eta}\big(\|v(t,\cdot)\|_{L^{2}(\Omega)}^{2}+\|u(t,\cdot)\|_{L^{2}(\Omega)}^{2}\big) \,\mathsf{d}t.
\end{equation}
Let us write $w(t,\cdot):=\int_{0}^{t}u(\tau,\cdot)\,\mathsf{d} \tau$.
Since $v(0,\cdot)=w(\eta,\cdot)$ and $v(t,\cdot)=w(\eta,\cdot)-w(t,\cdot)$, from \eqref{eq:patch1-uniquenss5} we know that
\begin{align*}
\|w(\eta,\cdot)\|_{\tilde{H}^{s}(\Omega)}^{2}+\|u(\eta,\cdot)\|_{L^{2}(\Omega)}^{2}
& \lesssim \int_{0}^{\eta}\big(\|w(\eta,\cdot)-w(t,\cdot)\|_{L^{2}(\Omega)}^{2}+\|u(t,\cdot)\|_{L^{2}(\Omega)}^{2}\big) \,\mathsf{d}t \\
& \lesssim \eta \|w(\eta,\cdot)\|_{L^2(\Omega)} + \int_{0}^{\eta} \big( \|w(t,\cdot)\|_{L^{2}(\Omega)}^{2} + \|u(t,\cdot)\|_{L^{2}(\Omega)}^{2} \big) \,\mathsf{d}t.
\end{align*}
Therefore, we can choose $T_{1}$, which is independent of $\eta$, such that 
\begin{equation*}
\|w(\eta,\cdot)\|_{\tilde{H}^{s}(\Omega)}^{2}+\|u(\eta,\cdot)\|_{L^{2}(\Omega)}^{2}
\leq C\int_{0}^{\eta}\big(\|w(t,\cdot)\|_{L^{2}(\Omega)}^{2}+\|u(t,\cdot)\|_{L^{2}(\Omega)}^{2}\big) \,\mathsf{d}t,
\end{equation*}
for all $0 \leq \eta \leq T_1$.
Using the Gr\"{o}nwall's inequality in \cite[Section~B.2]{Eva10PDE}, we know that $u(t,\cdot)=0$ for all $t\in[0,T_{1}]$. Applying the same argument on the intervals $[T_{1},2T_{1}]$, $[2T_{1},3T_{1}]$, etc., we conclude that $u\equiv0$. 
\end{proof}

We need the following Sobolev embedding to obtain $L^{\infty}(\Omega_{T})$-regularity of the solution, which is a special case of \cite[Theorem~8.2]{DNPV12FractionalSobolev}: 
\begin{lem}
[\cite{DNPV12FractionalSobolev}] \label{lem:embedding}Let $n = 1$ and $1/2<s<1$.
There exists a constant $C=C(s,\Omega)$ such that 
$
\|f\|_{\mathcal{C}^{0,\alpha}(\Omega)}\le C (\|f\|_{L^{2}(\Omega)}^{2}+[f]_{\dot{H}^{s}(\Omega)}^{2} )
\le C\|f\|_{H^{s}(\mathbb{R})}^{2},
$
for any $f\in L^{2}(\Omega)$ with $\alpha = (2s-1)/2$. 
\end{lem}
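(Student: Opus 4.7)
The claim is the fractional Morrey embedding $H^s(\mathbb{R}) \hookrightarrow \mathcal{C}^{0,s-1/2}(\mathbb{R})$ in the one-dimensional regime $s\in(1/2,1)$, packaged as an intrinsic estimate on $\Omega$. The second inequality in the statement (middle $\le$ right) is immediate: the Gagliardo seminorm $[\,\cdot\,]_{\dot H^s(\Omega)}$ is an integral over $\Omega\times\Omega\subset\mathbb{R}\times\mathbb{R}$, so it is dominated by $[\,\cdot\,]_{\dot H^s(\mathbb{R})}$, and $\|f\|_{L^2(\Omega)}\le\|f\|_{L^2(\mathbb{R})}$. The real content is the first inequality, which I would prove by first establishing the global embedding $H^s(\mathbb{R})\hookrightarrow\mathcal{C}^{0,s-1/2}(\mathbb{R})$, and then transferring back to $\Omega$ by a bounded extension operator $E\colon H^s(\Omega)\to H^s(\mathbb{R})$ (available for Lipschitz $\Omega$, and in 1D obtainable elementarily by reflection).

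For the $L^\infty$ part of the Hölder norm, I would argue via Fourier inversion: for Schwartz $g$,
\begin{equation*}
|g(x)|\le (2\pi)^{-1/2}\int_{\mathbb{R}}|\hat g(\xi)|\,d\xi
\le (2\pi)^{-1/2}\Big(\int_{\mathbb{R}}(1+|\xi|^2)^{-s}\,d\xi\Big)^{1/2}\|g\|_{H^s(\mathbb{R})},
\end{equation*}
by Cauchy--Schwarz, and the auxiliary integral is finite precisely because $2s>1$. A density argument then passes this to $H^s(\mathbb{R})$.

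The Hölder seminorm estimate is the main step. For $x\ne y$ in $\mathbb{R}$ set $r=|x-y|$ and let $B=B_r(x_0)$ be a ball of radius $r$ containing both. After working with a good representative, I would write
\begin{equation*}
|f(x)-f(y)|\le \fint_B |f(x)-f(z)|\,dz+\fint_B |f(z)-f(y)|\,dz,
\end{equation*}
and estimate each average by Cauchy--Schwarz in $z$ after splitting the integrand as $\tfrac{|f(x)-f(z)|}{|x-z|^{(1+2s)/2}}\cdot |x-z|^{(1+2s)/2}$. This yields
\begin{equation*}
\fint_B |f(x)-f(z)|\,dz \le C\,[f]_{\dot H^s(B)}\Big(\tfrac{1}{|B|}\int_B |x-z|^{1+2s}\,dz\Big)^{1/2}\lesssim r^{s-1/2}\,[f]_{\dot H^s(\mathbb{R})},
\end{equation*}
where the scaling factor $r^{s-1/2}$ is exactly the Hölder exponent $\alpha=(2s-1)/2$. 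Summing the two terms and taking supremum over $x\ne y$ gives $[f]_{\mathcal{C}^{0,\alpha}(\mathbb{R})}\le C\,[f]_{\dot H^s(\mathbb{R})}$.

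The principal obstacle is twofold: first, justifying the averaging step rigorously requires working with the continuous representative (guaranteed a posteriori by the very Hölder estimate being proved), so one typically runs the argument on Schwartz functions and closes by density; second, translating the intrinsic seminorm $[f]_{\dot H^s(\Omega)}$ into an estimate on $\mathbb{R}$ requires the extension operator, whose boundedness $\|Ef\|_{H^s(\mathbb{R})}\lesssim\|f\|_{L^2(\Omega)}+[f]_{\dot H^s(\Omega)}$ is what ultimately produces the stated right-hand side. Both are classical (see \cite{DNPV12FractionalSobolev}), and combining them yields the lemma.
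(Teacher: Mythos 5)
The paper itself gives no proof of this lemma; it is quoted directly as a special case of Theorem~8.2 of \cite{DNPV12FractionalSobolev}, so the comparison is really against the argument in that reference.

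Your reduction steps are fine: the second inequality (Gagliardo seminorm monotonicity and $\|f\|_{L^{2}(\Omega)}\le\|f\|_{L^{2}(\mathbb{R})}$), the Fourier--Cauchy--Schwarz bound for the $L^{\infty}$ part (which uses $2s>1$ exactly as needed), and the reduction to the global embedding via a one-dimensional reflection extension are all correct and in the right spirit.

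However, there is a genuine gap at the central step where you claim
\begin{equation*}
\fint_{B}|f(x)-f(z)|\,\mathsf{d}z
\le C\,[f]_{\dot H^{s}(B)}\Big(\tfrac{1}{|B|}\int_{B}|x-z|^{1+2s}\,\mathsf{d}z\Big)^{1/2}.
\end{equation*}
After applying Cauchy--Schwarz with the split $|x-z|^{\pm(1+2s)/2}$, what actually appears on the right-hand side is the \emph{slice} integral $\big(\int_{B}|f(x)-f(z)|^{2}|x-z|^{-1-2s}\,\mathsf{d}z\big)^{1/2}$ at the fixed point $x$, which is the fractional square function $V(x)$, not the Gagliardo seminorm. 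The slice is only controlled on average: $\int V(x)^{2}\,\mathsf{d}x=[f]_{\dot H^{s}}^{2}$, so $V\in L^{2}$, but it need not be bounded uniformly in $x$. Your argument therefore yields $|f(x)-f(y)|\lesssim|x-y|^{s-1/2}\big(V(x)+V(y)\big)$, which is a pointwise modulus of continuity depending on $V$, not the uniform H\"older bound $|f(x)-f(y)|\lesssim|x-y|^{s-1/2}[f]_{\dot H^{s}}$ that the lemma requires. The cure is to control the mean oscillation, integrating over $x$ as well: $\fint_{B}|f-f_{B}|^{2}\le\fint_{B}\fint_{B}|f(x)-f(z)|^{2}\,\mathsf{d}z\,\mathsf{d}x\lesssim r^{2s-1}[f]_{\dot H^{s}(B)}^{2}$, and then either invoke Campanato's characterization of $\mathcal{C}^{0,\alpha}$ or telescope a dyadic chain of balls, which is the strategy used in \cite[Theorem~8.2]{DNPV12FractionalSobolev}. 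Both of these avoid ever needing a uniform-in-$x$ bound on the slice.

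A small remark unrelated to your argument: the inequality as printed in the paper is dimensionally inconsistent (the left side is unsquared while the two right-hand quantities are squared); it should presumably read $\|f\|_{\mathcal{C}^{0,\alpha}(\Omega)}^{2}$ on the left, and your proposal implicitly interprets it this way.
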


Therefore, Lemma~\ref{lem:wave-wellposedness} implies the following
result.
 
\begin{prop}
\label{prop:well-posed-suitable-regularity-wave}
Let $n=1$ and $1/2<s<1$.
Let $\Omega\subset\mathbb{R}$ be a bounded open set in $\mathbb{R}^{n}$,
let $W\subset\Omega^{e}$ be any open set satisfying $\overline{W}\cap\overline{\Omega}=\emptyset$. Let $a\in L^{\infty}(\Omega_{T})$.
Then for any $\tilde{F}\in L^{\infty}(\Omega_{T})$ and $f\in\mathcal{C}_{c}^{\infty}(W_{T})$,
there exists a unique weak solution $u$ of \eqref{eq:linear-wave}
satisfying 
\begin{align*}
& \quad\|u\|_{L^{\infty}(0,T;H^{s}(\mathbb{R}^{1})) \cap L^{\infty}(\mathbb{R}_{T}^{1})} + \|\partial_{t}u\|_{L^{\infty}(0,T;L^{2}(\Omega))} \\
& \leq C\big(\|\varphi\|_{\tilde{H}^{s}(\Omega)} + \|\psi\|_{L^{2}(\Omega)} + \|F\|_{L^{2}(\Omega_{T})} \\
& \quad + \|f\|_{L^{\infty}(0,T;H^{s}(\mathbb{R}^{1}))\cap L^{\infty}(\mathbb{R}_{T}^{1})} + \|(-\Delta)^{s}f\|_{L^{2}(\Omega_{T})} \big)
\end{align*}
for certain constant $C=C(s,T,\|a\|_{L^{\infty}(\Omega_{T})},\Omega)$. 
\end{prop}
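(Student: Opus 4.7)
The plan is to reduce to the case of zero exterior data, invoke Lemma~\ref{lem:wave-wellposedness} to obtain the $L^{\infty}(0,T;H^{s}(\mathbb{R}))$ and $L^{\infty}(0,T;L^{2}(\Omega))$ regularity, and then upgrade the spatial $H^{s}$-regularity to $L^{\infty}$ using the one-dimensional Sobolev embedding of Lemma~\ref{lem:embedding}. Concretely, set $v := u - f$ and note that $v$ solves a problem of the form
\begin{equation*}
(\partial_{t}^{2} + (-\Delta)^{s} + a) v = F - (-\Delta)^{s} f \text{ in } \Omega_{T}, \quad v = 0 \text{ in } \Omega_{T}^{e}, \quad v(0,\cdot) = \partial_{t} v(0,\cdot) = 0.
\end{equation*}
Since $\Omega$ is bounded and $F \in L^{\infty}(\Omega_{T})$, we have $F \in L^{2}(\Omega_{T})$ with $\|F\|_{L^{2}(\Omega_{T})} \lesssim \|F\|_{L^{\infty}(\Omega_{T})}$; together with the hypothesis $\|(-\Delta)^{s} f\|_{L^{2}(\Omega_{T})} < \infty$ this allows us to apply Lemma~\ref{lem:wave-wellposedness} (with $\varphi = 0$, $\psi = 0$) to obtain a unique weak solution $v$ satisfying the estimate
\begin{equation*}
\|v\|_{L^{\infty}(0,T;H^{s}(\mathbb{R}))} + \|\partial_{t} v\|_{L^{\infty}(0,T;L^{2}(\Omega))} \le C \bigl( \|F\|_{L^{2}(\Omega_{T})} + \|(-\Delta)^{s} f\|_{L^{2}(\Omega_{T})} \bigr).
\end{equation*}
Undoing the substitution $u = v + f$ and applying the triangle inequality in each norm, together with the trivial bound $\|\partial_{t} f\|_{L^{\infty}(0,T;L^{2}(\Omega))} \lesssim \|f\|_{L^{\infty}(0,T;H^{s}(\mathbb{R}))}$ via the fact that $f$ vanishes on $\Omega_T$ (so $\partial_t u = \partial_t v$ on $\Omega_T$), yields the desired control of $u$ in $L^{\infty}(0,T;H^{s}(\mathbb{R}))$ and of $\partial_{t} u$ in $L^{\infty}(0,T;L^{2}(\Omega))$.

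The remaining step is to promote the $L^{\infty}(0,T;H^{s}(\mathbb{R}))$ control to $L^{\infty}(\mathbb{R}_{T}^{1})$. Here the crucial input is Lemma~\ref{lem:embedding}: for $n = 1$ and $1/2 < s < 1$, $H^{s}(\mathbb{R})$ embeds continuously into $\mathcal{C}^{0,\alpha}(\mathbb{R}) \subset L^{\infty}(\mathbb{R})$ with $\alpha = (2s-1)/2$. Applying this embedding slice-by-slice in $t$ to $u(t,\cdot)$ gives
\begin{equation*}
\|u(t,\cdot)\|_{L^{\infty}(\mathbb{R})} \le C \|u(t,\cdot)\|_{H^{s}(\mathbb{R})} \text{ for a.e. } t \in (0,T),
\end{equation*}
and taking the essential supremum in $t$ yields $\|u\|_{L^{\infty}(\mathbb{R}_{T}^{1})} \le C \|u\|_{L^{\infty}(0,T;H^{s}(\mathbb{R}))}$. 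Combining this with the previous estimate gives the claimed inequality.

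The step I expect to require the most care is the uniqueness part feeding into Lemma~\ref{lem:wave-wellposedness}, since we only have $a \in L^{\infty}(\Omega_{T})$ without the $W^{1,\infty}$-in-$t$ regularity assumed in classical treatments such as \cite[Chapter~7]{Eva10PDE}; however, this has already been handled in the proof of Lemma~\ref{lem:wave-wellposedness} above via the standard time-reversal test function $v(t,\cdot) = \int_{t}^{\eta} u(\tau,\cdot) \,\mathsf{d}\tau$ and a Gr\"onwall argument, so no new difficulty arises here. Existence follows from the Galerkin scheme used to establish Lemma~\ref{lem:wave-wellposedness}, and the essential dimensional restriction $n=1$, $1/2 < s < 1$ enters solely through Lemma~\ref{lem:embedding}, exactly as discussed in the introductory remark on the $L^{\infty}$-regularity obstruction for the fractional Laplacian in higher dimensions.
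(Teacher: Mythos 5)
Your proof is essentially the one the paper intends: invoke Lemma~\ref{lem:wave-wellposedness} for the energy-space estimates and Lemma~\ref{lem:embedding} to upgrade $L^{\infty}(0,T;H^{s}(\mathbb{R}))$ to $L^{\infty}(\mathbb{R}_{T}^{1})$, which is exactly what the paper's one-line justification (``Therefore, Lemma~\ref{lem:wave-wellposedness} implies the following result'') refers to. One small slip: after subtracting $f$ (which vanishes near $t=0$) the reduced problem carries the original initial data $\varphi,\psi$ rather than zero, but since Lemma~\ref{lem:wave-wellposedness} handles general $\varphi\in\tilde{H}^{s}(\Omega)$ and $\psi\in\tilde{H}^{0}(\Omega)$, this does not affect the argument.
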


Using the same argument as in Proposition~\ref{prop:well-posedness-diffusion-nonlinear},
we also can prove the well-posedness of \eqref{eq:main-wave} for small
exterior data:

\begin{prop} \label{prop:well-posedness-wave-nonlinear}
Let $n=1$ and $\frac{1}{2}<s<1$.
Let $\Omega\subset\mathbb{R}$ be a bounded open set,
let $W\subset\Omega^{e}$ be any open set satisfying $\overline{W}\cap\overline{\Omega}=\emptyset$.
Fix any parameter $\delta>0$. Assume $q$ satisfies {\rm \ref{itm:q1}}--{\rm \ref{itm:q3}}.
There exists a sufficiently small parameter $\tilde{\epsilon}_{0}=\tilde{\epsilon}_{0}(s,\Omega,T,\delta)>0$
such that the following statement holds: Given any $f\in\mathcal{C}_{c}^{\infty}(W_{T})$
with $\|f\|_{{\rm ext}} \le \tilde{\epsilon}_{0}$, there exists a unique solution $u\in L^{\infty}(0,T;H^{s}(\mathbb{R}^{1}))\cap L^{\infty}(\mathbb{R}_{T}^{1})$
of \eqref{eq:main-wave} with 
\begin{equation}
\|u\|_{L^{\infty}(0,T;H^{s}(\mathbb{R}^{1}))\cap L^{\infty}(\mathbb{R}_{T}^{1})}\le C\|f\|_{{\rm ext}}\label{eq:continuity-dependence-solution-1}
\end{equation}
for certain constant $C=C(s,T,\Omega)$. 
\end{prop}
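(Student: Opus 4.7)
The plan is to mirror the proof of Proposition~\ref{prop:well-posedness-diffusion-nonlinear} almost verbatim, replacing the parabolic well-posedness input (Proposition~\ref{prop:well-posed-suitable-regularity}) by its hyperbolic analog Proposition~\ref{prop:well-posed-suitable-regularity-wave}, and replacing the parabolic linear solution operator by the one associated with $\partial_t^2 + (-\Delta)^s$. The key observation making this a direct transplant is that both linear estimates deliver the same target regularity $L^\infty(0,T;H^s(\mathbb{R}^1)) \cap L^\infty(\mathbb{R}_T^1)$, and in the one-dimensional range $1/2 < s < 1$ the embedding $H^s(\mathbb{R}) \hookrightarrow L^\infty(\mathbb{R})$ provided by Lemma~\ref{lem:embedding} gives us pointwise control of iterates, which is exactly what is needed to invoke the assumptions \ref{itm:q1}--\ref{itm:q3} on the nonlinearity.

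First, given $f \in \mathcal{C}_c^\infty(W_T)$ with $\|f\|_{\rm ext} \le \tilde{\epsilon}_0$, I would produce the ``background'' solution $u_0$ of the homogeneous linear wave equation
\begin{equation*}
(\partial_t^2 + (-\Delta)^s) u_0 = 0 \text{ in } \Omega_T, \quad u_0 = f \text{ in } \Omega_T^e, \quad u_0 = \partial_t u_0 = 0 \text{ on } \{0\}\times\mathbb{R},
\end{equation*}
and apply Proposition~\ref{prop:well-posed-suitable-regularity-wave} (with $a \equiv 0$, $F \equiv 0$, $\varphi = \psi = 0$) to obtain
\begin{equation*}
\|u_0\|_{L^\infty(0,T;H^s(\mathbb{R})) \cap L^\infty(\mathbb{R}_T^1)} \le C \|f\|_{\rm ext}.
\end{equation*}
Setting $u = u_0 + v$, a solution of \eqref{eq:main-wave} corresponds to a $v$ with zero exterior and zero initial Cauchy data solving $(\partial_t^2 + (-\Delta)^s) v = \mathcal{F}(v) := -q(\,\cdot\,, u_0 + v)$ in $\Omega_T$. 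Proposition~\ref{prop:well-posed-suitable-regularity-wave} applied with $a \equiv 0$ again defines the solution map $\mathcal{S}: L^2(\Omega_T) \to L^\infty(0,T;\tilde{H}^s(\Omega)) \cap L^\infty(\mathbb{R}_T^1)$, bounded with some constant $C = C(s,T,\Omega)$; note that boundedness of $\mathcal{F}(v)$ on $\Omega_T$ coupled with finite measure of $\Omega_T$ upgrades it to an $L^2$ source, so the composition $\mathcal{S} \circ \mathcal{F}$ makes sense once $v$ is $L^\infty$.

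Next I would run the same Banach fixed point argument. On the closed ball
\begin{equation*}
X_\epsilon = \bigl\{ v \in L^\infty(0,T;\tilde{H}^s(\Omega)) \cap L^\infty(\mathbb{R}_T^1) \,\bigl|\, \|v\|_{L^\infty(0,T;\tilde{H}^s(\Omega))\cap L^\infty(\mathbb{R}_T^1)} \le \epsilon \bigr\}
\end{equation*}
with $\epsilon = \|f\|_{\rm ext}$, the mean value theorem together with \ref{itm:q1}--\ref{itm:q2} lets me write $\mathcal{F}(v)(t,x) = -\partial_z q(t,x, \zeta(u_0 + v))(u_0 + v)$ for some $0 \le \zeta \le 1$, and \ref{itm:q3} yields $\|\mathcal{F}(v)\|_{L^\infty(\Omega_T)} \le \Phi(C\epsilon) \cdot C\epsilon$. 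Passing to the $L^2$ norm costs only a constant depending on $T$ and $|\Omega|$, so
\begin{equation*}
\|\mathcal{S} \circ \mathcal{F}(v)\|_{L^\infty(0,T;\tilde{H}^s(\Omega)) \cap L^\infty(\mathbb{R}_T^1)} \le \tilde C \Phi(C\epsilon) \epsilon.
\end{equation*}
Shrinking $\tilde{\epsilon}_0 = \tilde{\epsilon}_0(s,\Omega,T,\delta)$ using $\lim_{\epsilon \to 0} \Phi(\epsilon) = 0$ gives $\tilde C \Phi(C\tilde{\epsilon}_0) \le 1$, i.e.~self-mapping $\mathcal{S}\circ\mathcal{F}: X_\epsilon \to X_\epsilon$. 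The contraction bound follows by the same mean value trick on the difference $\mathcal{F}(v_1) - \mathcal{F}(v_2) = \partial_z q(\,\cdot\,, \xi)(v_1 - v_2)$ and a further shrinking of $\tilde{\epsilon}_0$ so that $\tilde C\Phi(C\tilde{\epsilon}_0) \le 1/2$. Banach's fixed point theorem then produces the unique $v \in X_\epsilon$, hence the unique $u = u_0 + v$ satisfying \eqref{eq:main-wave} together with the estimate \eqref{eq:continuity-dependence-solution-1}.

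The only substantive point requiring care is that, unlike the parabolic case, the linear hyperbolic estimate in Proposition~\ref{prop:well-posed-suitable-regularity-wave} accepts the source only in $L^2(\Omega_T)$ rather than $L^\infty(\Omega_T)$, so one must first bound the nonlinearity in $L^\infty$ (via the embedding of Lemma~\ref{lem:embedding}, which is exactly why the restriction $n=1$, $s > 1/2$ is imposed) and then trivially pass to $L^2$ via the finite measure of $\Omega_T$. Once this is granted the entire diffusion argument transfers, and the Banach fixed point scheme closes with a smallness threshold $\tilde{\epsilon}_0$ depending only on $s$, $\Omega$, $T$, $\delta$.
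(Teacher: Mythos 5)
Your proof is correct and follows exactly the route the paper intends: the authors simply state that Proposition~\ref{prop:well-posedness-wave-nonlinear} is proved ``using the same argument as in Proposition~\ref{prop:well-posedness-diffusion-nonlinear},'' i.e.~the Banach fixed point scheme with the hyperbolic solution operator from Proposition~\ref{prop:well-posed-suitable-regularity-wave} in place of the parabolic one. Your additional remark that the hyperbolic estimate controls the source only in $L^2(\Omega_T)$, so one passes from the $L^\infty$ bound on the nonlinearity to $L^2$ via the finite measure of $\Omega_T$, is a valid and useful clarification that the paper leaves implicit.
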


Finally, the inverse problem for the nonlinear fractional wave equation
\eqref{eq:main-wave}, i.e.~Theorem~\ref{thm:main-wave} and Corollary~\ref{cor:wave-m-measurement}, can be proved
using exactly the same idea as in Theorem~\ref{thm:main-diffusion} and Corollary~\ref{cor:diffusion-m-measurement}, respectively, see Section \ref{sec:inverse-problem-diffusion}.

\appendix

\section{Well-posedness of the linear fractional
diffusion equation} \label{sec:well-posed-linear}

\subsection{Uniqueness of weak solution}

We first prove the uniqueness of weak solution of \eqref{eq:linear-diffusion1} as well as \eqref{eq:linear-diffusion2}.
It suffices to prove the following statement: If $u$ a weak solution of 
\begin{equation} \label{eq:linear-diffusion3}
\begin{cases}
(\partial_{t}+(-\Delta)^{s}+a)v=0 & \text{in}\;\;\Omega_{T},\\
v=0 & \text{in}\;\;\Omega_{T}^{e} \;\; \text{and on} \;\; \{0\} \times \mathbb R^n.
\end{cases}
\end{equation}
then $u\equiv0$. Multiplying the first equation of \eqref{eq:linear-diffusion3}
by $v$, we obtain 
\begin{align*}
0
& = \langle\bm{v}',\bm{v}\rangle+\mathcal{B}[\bm{v},\bm{v};t]
= \frac{\mathsf{d}}{\mathsf{d}t}\big(\frac{1}{2}\|\bm{v}(t)\|_{L^{2}(\Omega)}^{2}\big)+\mathcal{B}[\bm{v},\bm{v};t] \\
& \ge\frac{\mathsf{d}}{\mathsf{d}t}\big(\frac{1}{2}\|\bm{v}(t)\|_{L^{2}(\Omega)}^{2}\big)-\|a\|_{L^{\infty}(\Omega_{T})}\|\bm{v}(t)\|_{L^{2}(\Omega)}^{2},
\end{align*}
that is, $
\frac{\mathsf{d}}{\mathsf{d}t}\big(\|\bm{v}(t)\|_{L^{2}(\Omega)}^{2}\big)\le2\|a\|_{L^{\infty}(\Omega_{T})}\|\bm{v}(t)\|_{L^{2}(\Omega)}^{2}.$
Using the Gr\"onwall's inequality in \cite[Section~B.2]{Eva10PDE},
we conclude $\|\bm{v}(t)\|_{L^{2}(\Omega)}^{2}=0$ for all $0\le t\le T$, hence $u\equiv0$.
The uniqueness is proved.

\subsection{Existence of weak solution}

Now it suffices prove that there exists a weak solution of \eqref{eq:linear-diffusion2}. 

\textbf{Step 1: Galerkin approximation.} We now set up the Galerkin
approximation for \eqref{eq:linear-diffusion2}. Similar to \cite[Appendix~A]{KLW21CalderonFractionalWave},
we consider an eigenbasis $\{w_{k}\}_{k\in\mathbb{N}}$ associated
with the Dirichlet fractional Laplacian in a bounded domain $\Omega$.
We normalize these eigenfunctions so that 
\begin{align*}
\{w_{k}\}_{k\in\mathbb{N}} & \text{ be an orthogonal basis in }\tilde{H}^{s}(\Omega),\\
\{w_{k}\}_{k\in\mathbb{N}} & \text{ be an orthonormal basis in }L^{2}(\Omega).
\end{align*}
Given any fixed integer $m\in\mathbb{N}$, we consider the following
ansatz: 
\begin{equation}
\bm{v}_{m}(t):=\sum_{k=1}^{m}d_{m}^{k}(t)w_{k}.\label{eq:0ansatz}
\end{equation}
Plugging the ansatz \eqref{eq:0ansatz} into Definition~\ref{def:weak-solution}(b), we obtain
\begin{equation}
\begin{cases}
(\bm{v}_{m}'(t),w_{k})_{L^{2}(\Omega)}+\mathcal{B}[\bm{v}_{m},w_{k};t]=(\tilde{\bm{F}}(t),w_{k})_{L^{2}(\Omega)} & \text{for all}\;\;0\le t\le T,\\
d_{m}^{k}(0)=(\tilde{\varphi},w_{k})_{L^{2}(\Omega)}.
\end{cases}\label{eq:0Galerkin1}
\end{equation}
Note that 
$(\bm{v}_{m}',w_{k})_{L^{2}(\Omega)}  =(d_{m}^{k})'(t)$, $\mathcal{B}[\bm{v}_{m},w_{k};t] =\sum_{\ell=1}^{m}e^{k\ell}(t)d_{m}^{k}(t)$ with the coefficients $e^{k\ell}(t):=\mathcal{B}[w_{\ell},w_{k};t].
$
This shows that $d_{m}^{k}(t)$ satisfies the following linear system
of ordinary differential equation (ODE): 
\[
\begin{cases}
(d_{m}^{k})'(t)+\sum_{\ell=1}^{m}e^{k\ell}(t)d_{m}^{k}(t)=(\tilde{\bm{F}}(t),w_{k})_{L^{2}(\Omega)} & \text{for all}\;\;0\le t\le T,\\
d_{m}^{k}(0)=(\tilde{\varphi},w_{k})_{L^{2}(\Omega)}.
\end{cases}
\]
Therefore, the standard ODE theory guarantees the existence and uniqueness
of such $d_{m}^{k}(t)$, and thus \eqref{eq:0ansatz} is a valid discretization
of \eqref{eq:linear-diffusion2}. 

\textbf{Step 2: Energy estimate.} Multiplying \eqref{eq:0Galerkin1}
by $d_{m}^{k}(t)$, and summing over index $k=1,\cdots,m$, we have
\begin{equation} \label{eq:0Galerkin-energy1}
(\bm{v}_{m}',\bm{v}_{m})_{L^{2}(\Omega)}+\mathcal{B}[\bm{v}_{m},\bm{v}_{m};t]=(\tilde{\bm{F}},\bm{v}_{m})_{L^{2}(\Omega)}.
\end{equation}
The following Hardy-Littlewood-Sobolev inequality can be found in \cite[Prop.~15.5]{Pon16elliptic} or in
\cite[equation~(A.11)]{KLW21CalderonFractionalWave}:
\begin{equation}
\|\bm{v}_{m}\|_{L^{2}(\mathbb{R}^{1})}=\|\bm{v}_{m}\|_{L^{2}(\Omega)}\le C(n,s)\|\phi\|_{L^{\frac{2n}{n-s}}(\mathbb{R}^{n})}\le C(n,s)\|(-\Delta)^{\frac{s}{2}}\phi\|_{L^{2}(\mathbb{R}^{1})} \label{eq:Hardy-Littlewood-Sobolev}
\end{equation}
for $n = 1$ and for all $\phi \in \tilde{H}^{s}(\Omega)$. On the other hand, we observe that 
$
(\bm{v}_{m}',\bm{v}_{m})_{L^{2}(\Omega)}=\frac{\mathsf{d}}{\mathsf{d}t}\big(\frac{1}{2}\|\bm{v}_{m}\|_{L^{2}(\Omega)}^{2}\big).$
Hence, from \eqref{eq:0Galerkin-energy1} we have 
\begin{equation}
\frac{\mathsf{d}}{\mathsf{d}t}\big(\frac{1}{2}\|\bm{v}_{m}\|_{L^{2}(\Omega)}^{2}\big)+\|\bm{v}_{m}\|_{\tilde{H}^{s}(\Omega)}^{2}\le C(n,s,\|a\|_{L^{\infty}(\Omega_{T})})\big(\|\bm{v}_{m}\|_{L^{2}(\Omega)}^{2}+\|\tilde{\bm{F}}\|_{L^{2}(\Omega)}^{2}\big)\label{eq:0Galerkin-energy2}
\end{equation}
for all $ 0\le t\le T$.
Using the Gr\"onwall's inequality in \cite[Section~B.2]{Eva10PDE},
we have 
\[
\|\bm{v}_{m}(t)\|_{L^{2}(\Omega)}^{2}\le e^{Ct} \big(\|\bm{v}_{m}(0)\|_{L^{2}(\Omega)}^{2}+C\int_{0}^{t}\|\tilde{\bm{F}}(s)\|_{L^{2}(\Omega)}^{2}\,\mathsf{d}s\big)\quad\text{for all}\;\;0\le t\le T.
\]
Since $
\|\bm{v}_{m}(0)\|_{L^{2}(\Omega)}^{2}=\sum_{k=1}^{m}|(\tilde{\varphi},w_{k})_{L^{2}(\Omega)}|^{2}\le\sum_{k=1}^{\infty}|(\tilde{\varphi},w_{k})_{L^{2}(\Omega)}|^{2}=\|\varphi\|_{L^{2}(\Omega)}^{2},$
then we have 
\begin{equation}
\sup_{0\le t\le T}\|\bm{v}_{m}(t)\|_{L^{2}(\Omega)}^{2}\le C_{s,T,\|a\|_{\infty}}\big(\|\varphi\|_{L^{2}(\Omega)}^{2}+\|\tilde{\bm{F}}\|_{L^{2}(\Omega_{T})}^{2}\big).\label{eq:0Galerkin-energy3}
\end{equation}
Integrating \eqref{eq:0Galerkin-energy2} on $t\in[0,T]$, we obtain
\begin{equation}
\|\bm{v}_{m}\|_{L^{2}(0,T;\tilde{H}^{s}(\Omega))}^{2}\le C_{s,\|a\|_{\infty}}\big(\|\bm{v}_{m}\|_{L^{2}(\Omega_{T})}^{2}+\|\tilde{\bm{F}}\|_{L^{2}(\Omega_{T})}^{2}\big).\label{eq:0Galerkin-energy4}
\end{equation}
Combining \eqref{eq:0Galerkin-energy3} and \eqref{eq:0Galerkin-energy4},
we obtain the following energy estimate: 
\begin{align}
 & \quad\sup_{0\le t\le T}\|\bm{v}_{m}(t)\|_{L^{2}(\Omega)}^{2}+\|\bm{v}_{m}\|_{L^{2}(0,T;\tilde{H}^{s}(\Omega))}^{2}\le C(n,s,T,\|a\|_{L^{\infty}(\Omega_{T})}) (\|\varphi\|_{L^{2}(\Omega)}^{2}+\|\tilde{\bm{F}}\|_{L^{2}(\Omega_{T})}^{2} ).\label{eq:0Galerkin-energy5}
\end{align}
Fixing any $\phi\in\tilde{H}^{s}(\Omega)$ with $\|\phi\|_{\tilde{H}^{s}(\Omega)}\le1$,
we write $\phi=\phi_{1}+\phi_{2}$, where $\phi_{1}\in{\rm span}\,\{w_{k}\}_{k=1}^{m}$
and $(\phi_{2},w_{k})_{L^{2}(\Omega)}=0$ for $k=1,\cdots,m$. Using
\eqref{eq:0Galerkin1}, we see that 
\[
(\bm{v}_{m}'(t),\phi)_{L^{2}(\Omega)}=(\bm{v}_{m}'(t),\phi_{1})_{L^{2}(\Omega)}=(\tilde{\bm{F}},\phi_{1})-\mathcal{B}[\bm{v}_{m},\phi_{1};t].
\]
Since $\|\phi_{1}\|_{\tilde{H}^{s}(\Omega)}\le1$, this implies 
$
|(\bm{v}_{m}'(t),\phi)_{L^{2}(\Omega)}|\le C (\|\tilde{\bm{F}}(t)\|_{L^{2}(\Omega)}^{2}+\|\bm{v}_{m}\|_{\tilde{H}^{s}(\Omega)}^{2} )$.
Hence we know that
\[
\|\bm{v}_{m}'(t)\|_{H^{-s}(\Omega)}^{2}
:= \sup_{\|\phi\|_{\tilde{H}^{s}(\Omega)}\le1}|(\bm{v}_{m}'(t),\phi)_{L^{2}(\Omega)}|\le C_{\|a\|_{\infty}} (\|\tilde{\bm{F}}(t)\|_{L^{2}(\Omega)}^{2}+\|\bm{v}_{m}\|_{\tilde{H}^{s}(\Omega)}^{2}).
\]
Integrating the inequality above on $t\in[0,T]$, and combining the result with
\eqref{eq:0Galerkin-energy5}, we obtain 
\begin{align}
 & \sup_{0\le t\le T}\|\bm{v}_{m}(t)\|_{L^{2}(\Omega)}^{2}+\|\bm{v}_{m}\|_{L^{2}(0,T;\tilde{H}^{s}(\Omega))}^{2}+\|\bm{v}_{m}'\|_{L^{2}(0,T;H^{-s}(\Omega))}^{2}\nonumber \\
 & \quad\le C_{s,T,\|a\|_{\infty}} (\|\varphi\|_{L^{2}(\Omega)}^{2}+\|\tilde{\bm{F}}\|_{L^{2}(\Omega_{T})}^{2}).\label{eq:0Galerkin-energy6}
\end{align}

\textbf{Step 3: Passing to the limit.}
By \eqref{eq:0Galerkin-energy6},
we can extract a subsequence of $\{\bm{v}_{m}\}_{m\in\mathbb{N}}$,
 still denoted by  $\{\bm{v}_{m}\}_{m\in\mathbb{N}}$ (for simplicity),
such that 
\begin{equation}
\begin{cases}
\bm{v}_{m}\rightharpoonup\bm{v} & \text{weakly in }L^{2}(0,T;\tilde{H}^{s}(\Omega)),\\
\bm{v}_{m}'\rightharpoonup\bm{v}' & \text{weakly in }L^{2}(0,T;H^{-s}(\Omega)).
\end{cases}\label{eq:0weak-limit}
\end{equation}
Given any fixed integer $N$, we write $
\tilde{\bm{v}}(t):=\sum_{k=1}^{N}d^{k}(t)w_{k},$
where $d^{k}(t)~(k=1, \cdots, N)$ are arbitrary smooth functions (not the one in \eqref{eq:0ansatz}).
Choosing $m\ge N$, multiplying \eqref{eq:0Galerkin1} by $d^{k}(t)$, and summing over $k=1,\cdots,N$, we obtain 
\begin{equation}
\int_{0}^{T}\big((\bm{v}_{m}'(t),\tilde{\bm{v}}(t))_{L^{2}(\Omega)}+\mathcal{B}[\bm{v}_{m},\tilde{\bm{v}};t]\big)\,\mathsf{d}t=\int_{0}^{T}(\tilde{\bm{F}}(t),\tilde{\bm{v}}(t))_{L^{2}(\Omega)}\,\mathsf{d}t.\label{eq:0weak-limit-before}
\end{equation}
Taking $m \to +\infty$ in \eqref{eq:0weak-limit-before}, and from \eqref{eq:0weak-limit}, we know that
\begin{equation}
\int_{0}^{T}\big(\langle\bm{v}'(t),\tilde{\bm{v}}(t)\rangle+\mathcal{B}[\bm{v},\tilde{\bm{v}};t]\big)\,\mathsf{d}t=\int_{0}^{T}(\tilde{\bm{F}}(t),\tilde{\bm{v}}(t))_{L^{2}(\Omega)}\,\mathsf{d}t.\label{eq:0weak-limit-after}
\end{equation}
Due to the arbitrariness of $N$ and $\{d^{k}\}_{k=1}^{N}$, we have
\[
\langle\bm{v}',\phi\rangle+\mathcal{B}[\bm{v},\phi;t]=(\tilde{\bm{F}}(t),\phi)_{L^{2}(\Omega)}\quad\text{for all}\;\;\phi\in\tilde{H}^{s}(\Omega).
\]
This together with \eqref{eq:0Galerkin-energy6}
verifies Definition~\ref{def:weak-solution}(a)(b). 

It remains to show $\bm{v}$ verifies Definition~\ref{def:weak-solution}(c).
To that end, let us choose any $\tilde{\bm{v}}\in\mathcal{C}^{1}(0,T;\tilde{H}^{s}(\Omega))$
with $\tilde{\bm{v}}(T)=0$. From \eqref{eq:0weak-limit-after}, we
have 
\begin{equation}
\int_{0}^{T}\big((\tilde{\bm{v}}'(t),\bm{v}(t))_{L^{2}(\Omega)}+\mathcal{B}[\bm{v},\tilde{\bm{v}};t]\big)\,\mathsf{d}t=\int_{0}^{T}(\tilde{\bm{F}}(t),\tilde{\bm{v}}(t))_{L^{2}(\Omega)}\,\mathsf{d}t+(\tilde{\bm{v}}'(0),\bm{v}(0))_{L^{2}(\Omega)}.\label{eq:0weak-limit-after1}
\end{equation}
Similarly, from \eqref{eq:0weak-limit-before}, we have 
\begin{equation}
\int_{0}^{T}\big((\tilde{\bm{v}}'(t),\bm{v}_{m}(t))_{L^{2}(\Omega)}+\mathcal{B}[\bm{v}_{m},\tilde{\bm{v}};t]\big)\,\mathsf{d}t=\int_{0}^{T}(\tilde{\bm{F}}(t),\tilde{\bm{v}}(t))_{L^{2}(\Omega)}\,\mathsf{d}t+(\tilde{\bm{v}}'(0),\bm{v}_{m}(0))_{L^{2}(\Omega)}.\label{eq:0weak-limit-before1}
\end{equation}
Combining \eqref{eq:0weak-limit} and \eqref{eq:0weak-limit-before1},
we obtain 
\[
\int_{0}^{T} \big((\tilde{\bm{v}}'(t),\bm{v}(t))_{L^{2}(\Omega)}+\mathcal{B}[\bm{v},\tilde{\bm{v}};t]\big)\,\mathsf{d}t=\int_{0}^{T}(\tilde{\bm{F}}(t),\tilde{\bm{v}}(t))_{L^{2}(\Omega)}\,\mathsf{d}t+(\tilde{\bm{v}}'(0),\varphi)_{L^{2}(\Omega)}.
\]
Comparing this with \eqref{eq:0weak-limit-after1}, we see that
$
(\tilde{\bm{v}}'(0),\bm{v}(0))_{L^{2}(\Omega)}=(\tilde{\bm{v}}'(0),\varphi)_{L^{2}(\Omega)}.$
Due to the arbitrariness of $\tilde{\bm{v}}$, we conclude that $\bm{v}$
verifies Definition~\ref{def:weak-solution}(c). 

\textbf{Step 4. Higher regularity.} We now further assume $\varphi\in\tilde{H}^{s}(\Omega)$.
Multiplying \eqref{eq:0Galerkin1} by $(d_{m}^{k})'(t)$, and summing
over $k=1,\cdots,m$, we have 
\begin{equation}
(\bm{v}_{m}',\bm{v}_{m}')_{L^{2}(\Omega)}+\mathcal{B}[\bm{v}_{m},\bm{v}_{m}']=(\tilde{\bm{F}},\bm{v}_{m}')_{L^{2}(\Omega)}.\label{eq:0Galerkin-energy-improve1}
\end{equation}
Note that  we have
\begin{align*}
\mathcal{B}[\bm{v}_{m},\bm{v}_{m}']
 & = \int_{\mathbb{R}^{n}}(-\Delta)^{\frac{s}{2}}\bm{v}_{m}(t)(-\Delta)^{\frac{s}{2}}\bm{v}_{m}'(t)\,\mathsf{d}x + \int_\Omega a(t,x) \bm{v}_{m}(t,x) \bm{v}_{m}'(t,x) \,\mathsf{d}x \\
 & =\frac{\mathsf{d}}{\mathsf{d}t}\big(\frac{1}{2}\int_{\mathbb{R}^{1}}|(-\Delta)^{\frac{s}{2}}\bm{v}_{m}(t)|^{2} \,\mathsf{d}x \big) + \int_\Omega a(t,x) \bm{v}_{m}(t,x) \bm{v}_{m}'(t,x) \,\mathsf{d}x,
\end{align*}
and
$\int_{\Omega}a(t,x)\bm{v}_{m}(t)\bm{v}_{m}'(t)\,\mathsf{d}x
\le \epsilon\|\bm{v}_{m}'(t)\|_{L^{2}(\Omega)}^{2}+C\epsilon^{-1}\|\bm{v}_{m}(t)\|_{L^{2}(\Omega)}^{2}$
and 
$
|(\tilde{\bm{F}},\bm{v}_{m}')_{L^{2}(\Omega)}|\le\epsilon\|\bm{v}_{m}'(t)\|_{L^{2}(\Omega)}^{2}+C\epsilon^{-1}\|\tilde{\bm{F}}(t)\|_{L^{2}(\Omega)}^{2}.$ These along with  \eqref{eq:0Galerkin-energy-improve1} imply
\begin{align*}
 & \|\bm{v}_{m}'(t)\|_{L^{2}(\Omega)}^{2}+\frac{\mathsf{d}}{\mathsf{d}t}\big(\frac{1}{2}\int_{\mathbb{R}^{n}}|(-\Delta)^{\frac{s}{2}}\bm{v}_{m}(t)|^{2}\,\mathsf{d}x\big)\\
 & \le2\epsilon\|\bm{v}_{m}'(t)\|_{L^{2}(\Omega)}^{2}+C\epsilon^{-1}\|\bm{v}_{m}(t)\|_{L^{2}(\Omega)}^{2}+C\epsilon^{-1}\|\tilde{\bm{F}}(t)\|_{L^{2}(\Omega)}^{2}.
\end{align*}
Choosing $\epsilon=1/4$, we obtain 
\begin{equation} \label{eq:0Galerkin-energy-improve2}
\|\bm{v}_{m}'(t)\|_{L^{2}(\Omega)}^{2}+\frac{\mathsf{d}}{\mathsf{d}t}\big(\int_{\mathbb{R}^{n}}|(-\Delta)^{\frac{s}{2}}\bm{v}_{m}(t)|^{2}\,\mathsf{d}x\big)
\le C(\|\bm{v}_{m}(t)\|_{L^{2}(\Omega)}^{2}+\|\tilde{\bm{F}}(t)\|_{L^{2}(\Omega)}^{2}).
\end{equation}
Given any $0\le\tilde{t}\le T$, we integrate \eqref{eq:0Galerkin-energy-improve2}
on $t\in[0,\tilde{t}]$,
\begin{align*}
 & \int_{0}^{\tilde{t}}\|\bm{v}_{m}'(t)\|_{L^{2}(\Omega)}^{2}\,\mathsf{d}t+\int_{\mathbb{R}^{n}}|(-\Delta)^{\frac{s}{2}}\bm{v}_{m}(\tilde{t})|^{2}\,\mathsf{d}x-\int_{\mathbb{R}^{n}}|(-\Delta)^{\frac{s}{2}}\bm{v}_{m}(0)|^{2}\,\mathsf{d}x\\
 & \le C\big(\int_{0}^{\tilde{t}}\|\bm{v}_{m}(t)\|_{L^{2}(\Omega)}^{2}+\int_{0}^{\tilde{t}}\|\tilde{\bm{F}}(t)\|_{L^{2}(\Omega)}^{2}\big)
  \le C\big(\|\bm{v}_{m}\|_{L^{2}(\Omega_{T})}^{2}+\|\tilde{F}\|_{L^{2}(\Omega_{T})}^{2}\big).
\end{align*}
Combining this inequality with \eqref{eq:Hardy-Littlewood-Sobolev},
we obtain 
\begin{align}
  \|\bm{v}_{m}'\|_{L^{2}(\Omega_{T})}^{2}+\|\bm{v}_{m}\|_{L^{\infty}(0,T;\tilde{H}^{s}(\Omega))}^{2}
 &\le C\big(\|\bm{v}_{m}'\|_{L^{2}(\Omega_{T})}^{2}+\sup_{0\le\tilde{t}\le T}\int_{\mathbb{R}^{n}}|(-\Delta)^{\frac{s}{2}}\bm{v}_{m}(\tilde{t})|^{2}\,\mathsf{d}x\big)\nonumber \\
 & \le C\big(\int_{\mathbb{R}^{n}}|(-\Delta)^{\frac{s}{2}}\bm{v}_{m}(0)|^{2}\,\mathsf{d}x+\|\bm{v}_{m}\|_{L^{2}(\Omega_{T})}^{2}+\|\tilde{F}\|_{L^{2}(\Omega_{T})}^{2}\big)\nonumber \\
 & \le C(\|\bm{v}_{m}(0)\|_{\tilde{H}^{s}(\Omega)}^{2}+\|\bm{v}_{m}\|_{L^{2}(\Omega_{T})}^{2}+\|\tilde{F}\|_{L^{2}(\Omega_{T})}^{2}\big).\label{eq:0Galerkin-energy-improve3}
\end{align}
Since $\|\bm{v}_{m}(0)\|_{\tilde{H}^{s}(\Omega)}^2 \leq \sum_{k=1}^{\infty} |(g,w_{k})_{L^{2}(\Omega)}|^{2}\|w_{k}\|_{\tilde{H}^{s}(\Omega)}^2
= \|\varphi\|_{\tilde{H}^{s}(\Omega)}^{2}$, \eqref{eq:0Galerkin-energy-improve3} implies 
\begin{equation*}
	 \|\bm{v}_{m}'\|_{L^{2}(\Omega_{T})}^{2}+\|\bm{v}_{m}\|_{L^{\infty}(0,T;\tilde{H}^{s}(\Omega))}^{2}
	 \le C(\|\tilde{\varphi}\|_{\tilde{H}^{s}(\Omega)}^{2}+\|\bm{v}_{m}\|_{L^{2}(\Omega_{T})}^{2}+\|\tilde{F}\|_{L^{2}(\Omega_{T})}^{2}).
\end{equation*}
Therefore, combining this inequality with \eqref{eq:0Galerkin-energy6},
we obtain 
\begin{equation*}
	\|\bm{v}_{m}'\|_{L^{2}(\Omega_{T})}^{2}+\|\bm{v}_{m}\|_{L^{\infty}(0,T;\tilde{H}^{s}(\Omega))}^{2}
	\le C(\|\varphi\|_{\tilde{H}^{s}(\Omega)}^{2}+\|\tilde{F}\|_{L^{2}(\Omega_{T})}^{2}).
\end{equation*}
Finally, taking the limit $m\rightarrow\infty$, we complete our proof.

\section{Some discussions} \label{sec:conclusion}

The main difficulty in proving Theorem \ref{thm:main-wave} is the regularity of the solutions. 
Due to this difficulty, we are only able to prove Theorem~\ref{thm:main-wave} in one dimension.
The method we used requires the $L^{\infty}(\Omega_{T})$-regularity for the linear fractional wave equation.
The $L^{\infty}(\Omega_{T})$-regularity is required to guarantee the well-posedness of \eqref{eq:main-wave},
and it is essential to prove that the linearization is well-defined as we see in Section \ref{sec:The-forward-problems}.
However, we are only able to obtain this regularity in the case when $n=1$ and $\frac 1 2 < s < 1$.
If one can prove the well-posedness of \eqref{eq:main-wave} for general $n \in \mathbb{N}$ and $0<s<1$, then Theorem~\ref{thm:main-wave} immediately extends for general $n \in \mathbb{N}$ and $0<s<1$.

In view of standard elliptic regularity results (see \cite{GT01Elliptic} or \cite[Proposition~A.1]{JLS17pharmonic} in terms of other norms) as well as Sobolev embedding, an attempt to improve the result in Theorem~\ref{thm:main-wave} is to try to obtain the $L^{\infty}(0,T;H^{2s}(\mathbb{R}^{n}))$ regularity for the solution. However, this idea is less likely to be feasible. 
Using \cite[Lemma~2.3]{GSU20Calderon}, we know there exists a unique solution $w\in\tilde{H}^{s}(\Omega)$ of 
\begin{equation} \label{eq:fractional-Dirichlet}
	\left\{\begin{aligned}
		(-\Delta)^{s} w & = F && \text{in}\;\;\Omega,\\
		w& =0 && \text{in}\;\;\Omega^{e},
	\end{aligned}\right.
\end{equation}
for $F\in H^{-s}(\Omega)$.
Choose $\Omega$ to be the unit disk and $F$ to be a positive constant in $\Omega$.
Then the best regularity result of \eqref{eq:fractional-Dirichlet} we know is $\mathcal{C}^{s}(\mathbb{R}^{n})$ \cite[Proposition~7.2]{Ros16FractionalLaplacianSurvey}.
In fact, \cite[Lemma~5.4]{Ros16FractionalLaplacianSurvey} gives an explicit solution $w(x) = (1-|x|^{2})_{+}^{s} \in \mathcal{C}^{s}(\mathbb{R}^{n})$, and such $w$ does not belong to $\mathcal{C}^{s'}(\mathbb{R}^{n})$ for any $s' > s$.
When $n=1$, we have the continuous embedding $H^{2s}(\mathbb{R}) \hookrightarrow C^{2s - \frac{1}{2}}(\mathbb{R})$. Therefore, at least when $n=1$ and $s > \frac{1}{2}$, such a solution $w$ of \eqref{eq:fractional-Dirichlet} cannot be in $H^{2s}(\mathbb{R})$.


\section*{Acknowledgements}
All the authors were partly supported by the Academy of Finland (Centre of Excellence in Inverse Modelling and Imaging, grant 284715) and by the European Research Council under Horizon 2020 (ERC CoG 770924).


{

\begin{bibdiv}
	\begin{biblist}
		
		\bib{Barlow2009nonlocal}{article}{
			author={Barlow, M.~T.},
			author={Bass, R.~F.},
			author={Chen, Z.-Q.},
			author={Kassmann, M.},
			title={Non-local {D}irichlet forms and symmetric jump processes},
			date={2009},
			journal={Trans. Amer. Math. Soc.},
			volume={361},
			number={4},
			pages={1963\ndash 1999},
			note={\href{https://mathscinet.ams.org/mathscinet-getitem?mr=2465826}{MR2465826},
				\href{https://doi.org/10.1090/S0002-9947-08-04544-3}{doi:10.1090/S0002-9947-08-04544-3}},
		}
		
		\bib{Bre11PDE}{book}{
			author={Brezis, H.},
			title={Functional analysis, sobolev spaces and partial differential
				equations},
			series={Universitext},
			publisher={Springer},
			address={New York},
			date={2011},
			note={\href{https://mathscinet.ams.org/mathscinet-getitem?mr=2759829}{MR2759829},
				\href{https://doi.org/10.1007/978-0-387-70914-7}{doi:10.1007/978-0-387-70914-7}},
		}
		
		\bib{Cal_80}{article}{
			author={Calder{\'{o}}n, A.~P.},
			title={On an inverse boundary value problem},
			date={2006},
			journal={Comput. Appl. Math.},
			volume={25},
			number={2--3},
			pages={133\ndash 138},
			note={\href{https://mathscinet.ams.org/mathscinet-getitem?mr=2321646}{MR2321646},
				\href{https://doi.org/10.1590/S0101-82052006000200002}{doi:10.1590/S0101-82052006000200002}},
		}
		
		\bib{Krupchyk_nonlinear_2}{article}{
			author={C{\^{a}}rstea, C.~I.},
			author={Feizmohammadi, A.},
			author={Kian, Y.},
			author={Krupchyk, K.},
			author={Uhlmann, G.},
			title={The {C}alder\'{o}n inverse problem for isotropic quasilinear
				conductivities},
			date={2021},
			journal={Adv. Math.},
			volume={391},
			pages={Paper No. 107956, 31},
			note={\href{https://mathscinet.ams.org/mathscinet-getitem?mr=4300916}{MR4300916},
				\href{https://doi.org/10.1016/j.aim.2021.107956}{doi:10.1016/j.aim.2021.107956},
				\href{https://arxiv.org/abs/2103.05917}{\texttt{arXiv:2103.05917}}},
		}
		
		\bib{DNPV12FractionalSobolev}{article}{
			author={Di~Nezza, E.},
			author={Palatucci, G.},
			author={Valdinoci, E.},
			title={Hitchhiker's guide to the fractional {S}obolev spaces},
			date={2012},
			journal={Bull. Sci. Math.},
			volume={136},
			number={5},
			pages={521\ndash 573},
			note={\href{https://mathscinet.ams.org/mathscinet-getitem?mr=2944369}{MR2944369},
				\href{https://doi.org/10.1016/j.bulsci.2011.12.004}{doi:10.1016/j.bulsci.2011.12.004},
				\href{https://arxiv.org/abs/1104.4345}{\texttt{arXiv:1104.4345}}},
		}
		
		\bib{Eva10PDE}{book}{
			author={Evans, L.~C.},
			title={Partial differential equations},
			edition={second},
			series={Graduate Studies in Mathematics},
			publisher={American Mathematical Society},
			address={Providence, RI},
			date={2010},
			volume={19},
			note={\href{https://mathscinet.ams.org/mathscinet-getitem?mr=2597943}{MR2597943},
				\href{http://dx.doi.org/10.1090/gsm/019}{doi:10.1090/gsm/019}},
		}
		
		\bib{Ali_Yavar_Uhlmann}{article}{
			author={Feizmohammadi, A.},
			author={Kian, Y.},
			author={Uhlmann, G.},
			title={An inverse problem for a quasilinear convection-diffusion
				equation},
			date={2022},
			journal={Nonlinear Anal.},
			volume={222},
			note={Paper No. 112921, 30pp.
				\href{https://mathscinet.ams.org/mathscinet-getitem?mr=4419011}{MR4419011},
				\href{https://doi.org/10.1016/j.na.2022.112921}{doi:10.1016/j.na.2022.112921}},
		}
		
		\bib{Ali_tony_yisun}{article}{
			author={Feizmohammadi, A.},
			author={Liimatainen, T.},
			author={Lin, Y.-H.},
			title={An inverse problem for a semilinear elliptic equation on
				conformally transversally anisotropic manifolds},
			date={2021},
			journal={arXiv preprint},
			note={\href{https://arxiv.org/abs/2112.08305}{\texttt{arXiv:2112.08305}}},
		}
		
		\bib{FO_jde}{article}{
			author={Feizmohammadi, A.},
			author={Oksanen, L.},
			title={An inverse problem for a semi-linear elliptic equation in
				{R}iemannian geometries},
			date={2020},
			journal={J. Differential Equations},
			volume={269},
			number={6},
			pages={4683\ndash 4719},
			note={\href{https://mathscinet.ams.org/mathscinet-getitem?mr=4104456}{MR4104456},
				\href{https://doi.org/10.1016/j.jde.2020.03.037}{doi:10.1016/j.jde.2020.03.037},
				\href{https://arxiv.org/abs/1904.00608}{\texttt{arXiv:1904.00608}}},
		}
		
		\bib{Ali_lauri}{article}{
			author={Feizmohammadi, A.},
			author={Oksanen, L.},
			title={Recovery of zeroth order coefficients in non-linear wave
				equations},
			date={2022},
			journal={J. Inst. Math. Jussieu},
			volume={21},
			number={2},
			pages={367\ndash 393},
			note={\href{https://mathscinet.ams.org/mathscinet-getitem?mr=4386818}{MR4386818},
				\href{https://doi.org/10.1017/S1474748020000122}{doi:10.1017/S1474748020000122}},
		}
		
		\bib{GSU_jfa}{article}{
			author={Ghosh, T.},
			author={R{\"{u}}land, A.},
			author={Salo, M.},
			author={Uhlmann, G.},
			title={Uniqueness and reconstruction for the fractional {C}alder\'{o}n
				problem with a single measurement},
			date={2020},
			journal={J. Funct. Anal.},
			volume={279},
			number={1},
			pages={108505, 42},
			note={\href{https://mathscinet.ams.org/mathscinet-getitem?mr=4083776}{MR4083776},
				\href{https://doi.org/10.1016/j.jfa.2020.108505}{doi:10.1016/j.jfa.2020.108505},
				\href{https://arxiv.org/abs/1801.04449}{\texttt{arXiv:1801.04449}}},
		}
		
		\bib{GSU20Calderon}{article}{
			author={Ghosh, T.},
			author={Salo, M.},
			author={Uhlmann, G.},
			title={The {C}alder{\'o}n problem for the fractional {S}chr{\"o}dinger
				equation},
			date={2020},
			journal={Analysis \& PDE},
			volume={13},
			number={2},
			pages={455\ndash 475},
			note={\href{https://mathscinet.ams.org/mathscinet-getitem?mr=4078233}{MR4078233},
				\href{https://doi.org/10.2140/apde.2020.13.455}{doi:10.2140/apde.2020.13.455},
				\href{https://arxiv.org/abs/1609.09248}{\texttt{arXiv:1609.09248}}},
		}
		
		\bib{GT01Elliptic}{book}{
			author={Gilbarg, D.},
			author={Trudinger, N.~S.},
			title={Elliptic partial differential equations of second order (reprint
				of the 1998 edition)},
			series={Classics in Mathematics},
			publisher={Springer-Verlag Berlin Heidelberg},
			date={2001},
			volume={224},
			note={\href{https://mathscinet.ams.org/mathscinet-getitem?mr=1814364}{MR1814364},
				\href{https://doi.org/10.1007/978-3-642-61798-0}{doi:10.1007/978-3-642-61798-0}},
		}
		
		\bib{harrach_lin_nonlinear}{article}{
			author={Harrach, B.},
			author={Lin, Y.-H.},
			title={Simultaneous recovery of piecewise analytic coefficients in a
				semilinear elliptic equation},
			date={2022},
			journal={arXiv preprint},
			note={\href{https://arxiv.org/abs/2201.04594}{\texttt{arXiv:2201.04594}}},
		}
		
		\bib{hintz_uhlmann_zhai}{article}{
			author={Hintz, Peter},
			author={Uhlmann, Gunther},
			author={Zhai, Jian},
			title={The dirichlet-to-neumann map for a semilinear wave equation on
				lorentzian manifolds},
			date={2021},
			journal={arXiv preprint},
			note={\href{https://arxiv.org/abs/2103.08110}{\texttt{arXiv:2103.08110}}},
		}
		
		\bib{isakov_nonlinear}{article}{
			author={Isakov, V.},
			title={Uniqueness of recovery of some quasilinear partial differential
				equations},
			date={2001},
			journal={Comm. Partial Differential Equations},
			volume={26},
			number={11-12},
			pages={1947\ndash 1973},
			note={\href{https://mathscinet.ams.org/mathscinet-getitem?mr=1876409}{MR1876409},
				\href{https://www.tandfonline.com/doi/full/10.1081/PDE-100107813}{doi:10.1081/PDE-100107813}},
		}
		
		\bib{JLS17pharmonic}{article}{
			author={Julin, V.},
			author={Liimatainen, T.},
			author={Salo, M.},
			title={$p$-harmonic coordinates for {H}{\"{o}}lder metrics and
				applications},
			date={2017},
			journal={Comm. Anal. Geom.},
			volume={25},
			number={2},
			pages={395\ndash 430},
			note={\href{https://mathscinet.ams.org/mathscinet-getitem?mr=3690246}{MR3690246},
				\href{https://dx.doi.org/10.4310/CAG.2017.v25.n2.a5}{doi:10.4310/CAG.2017.v25.n2.a5},
				\href{https://arxiv.org/abs/1507.03874}{\texttt{arXiv:1507.03874}}},
		}
		
		\bib{KLU_invention}{article}{
			author={Kurylev, Y.},
			author={Lassas, M.},
			author={Uhlmann, G.},
			title={Inverse problems for {L}orentzian manifolds and non-linear
				hyperbolic equations},
			date={2018},
			journal={Invent. Math.},
			volume={212},
			number={3},
			pages={781\ndash 857},
			note={\href{https://mathscinet.ams.org/mathscinet-getitem?mr=3802298}{MR3802298},
				\href{https://doi.org/10.1007/s00222-017-0780-y}{doi:10.1007/s00222-017-0780-y},
				\href{https://arxiv.org/abs/1405.3386}{\texttt{arXiv:1405.3386}}},
		}
		
		\bib{KLW21CalderonFractionalWave}{article}{
			author={Kow, P.-Z.},
			author={Lin, Y.-H.},
			author={Wang, J.-N.},
			title={The {C}alder{\'{o}}n problem for the fractional wave equation:
				{U}niqueness and optimal stability},
			date={2021},
			journal={arXiv preprint},
			note={\href{https://arxiv.org/abs/2105.11324}{\texttt{arXiv:2105.11324}}},
		}
		
		\bib{Krupchyk_nonlinear_3}{article}{
			author={Krupchyk, K.},
			author={Uhlmann, G.},
			title={Partial data inverse problems for semilinear elliptic equations
				with gradient nonlinearities},
			date={2020},
			journal={Math. Res. Lett.},
			volume={27},
			number={6},
			pages={1801\ndash 1824},
			note={\href{https://mathscinet.ams.org/mathscinet-getitem?mr=4216606}{MR4216606},
				\href{https://doi.org/10.4310/MRL.2020.v27.n6.a10}{doi:10.4310/MRL.2020.v27.n6.a10},
				\href{https://arxiv.org/abs/1909.08122}{\texttt{arXiv:1909.08122}}},
		}
		
		\bib{Kru_Uhl_partial}{article}{
			author={Krupchyk, K.},
			author={Uhlmann, G.},
			title={A remark on partial data inverse problems for semilinear elliptic
				equations},
			date={2020},
			journal={Proc. Amer. Math. Soc.},
			volume={148},
			number={2},
			pages={681\ndash 685},
			note={\href{https://mathscinet.ams.org/mathscinet-getitem?mr=4052205}{MR4052205},
				\href{https://doi.org/10.1090/proc/14844}{doi:10.1090/proc/14844}},
		}
		
		\bib{Kwa17FractionalEquivalent}{article}{
			author={Kwa\'snicki, M.},
			title={Ten equivalent definitions of the fractional {L}aplace operator},
			date={2017},
			journal={Fractional Calculus and Applied Analysis},
			volume={20},
			number={1},
			pages={7\ndash 51},
			note={\href{https://mathscinet.ams.org/mathscinet-getitem?mr=3613319}{MR3613319},
				\href{https://doi.org/10.1515/fca-2017-0002}{doi:10.1515/fca-2017-0002},
				\href{https://arxiv.org/abs/1507.07356}{\texttt{arXiv:1507.07356}}},
		}
		
		\bib{Li21GlobalUniquenessSemilinear}{article}{
			author={Li, L.},
			title={On an inverse problem for a fractional semilinear elliptic
				equation involving a magnetic potential},
			date={2021},
			journal={J. Differential Equations},
			volume={296},
			pages={170\ndash 185},
			note={\href{https://mathscinet.ams.org/mathscinet-getitem?mr=4270574}{MR4270574},
				\href{https://doi.org/10.1016/j.jde.2021.06.003}{doi:10.1016/j.jde.2021.06.003},
				\href{https://arxiv.org/abs/2005.06714}{\texttt{arXiv:2005.06714}}},
		}
		
		\bib{Li22GlobalUniquenessParabolicSemilinear}{article}{
			author={Li, L.},
			title={An inverse problem for a fractional diffusion equation with
				fractional power type nonlinearities},
			date={2022},
			journal={Inverse Probl. Imaging},
			volume={16},
			number={3},
			pages={613\ndash 624},
			note={\href{http://dx.doi.org/10.3934/ipi.2021064}{doi:10.3934/ipi.2021064},
				\href{https://arxiv.org/abs/2104.00132}{\texttt{arXiv:2104.00132}}},
		}
		
		\bib{Lin20monotonicity}{article}{
			author={Lin, Y.-H.},
			title={Monotonicity-based inversion of fractional semilinear elliptic
				equations with power type nonlinearities},
			date={2020},
			journal={arXiv preprint},
			note={\href{https://arxiv.org/abs/2005.07163}{\texttt{arXiv:2005.07163}}},
		}
		
		\bib{LL19GlobalUniqueness}{article}{
			author={Lai, R.-Y.},
			author={Lin, Y.-H.},
			title={Global uniqueness for the fractional semilinear {S}chr{\"o}dinger
				equation},
			date={2019},
			journal={Proc. Amer. Math. Soc.},
			volume={147},
			number={3},
			pages={1189\ndash 1199},
			note={\href{https://mathscinet.ams.org/mathscinet-getitem?mr=3896066}{MR3896066},
				\href{https://doi.org/10.1090/proc/14319}{doi:10.1090/proc/14319},
				\href{https://arxiv.org/abs/1710.07404}{\texttt{arXiv:1710.07404}}},
		}
		
		\bib{LL22GlobalUniqueness}{article}{
			author={Lai, R.-Y.},
			author={Lin, Y.-H.},
			title={Inverse problems for fractional semilinear elliptic equations},
			date={2022},
			journal={Nonlinear Anal.},
			volume={216},
			number={112699},
			note={\href{https://mathscinet.ams.org/mathscinet-getitem?mr=4348315}{MR4348315},
				\href{https://doi.org/10.1016/j.na.2021.112699}{doi:10.1016/j.na.2021.112699},
				\href{https://arxiv.org/abs/2004.00549}{\texttt{arXiv:2004.00549}}},
		}
		
		\bib{LL22GlobalUniquenessSemilinear1}{article}{
			author={Lin, Y.-H.},
			author={Liu, H.},
			title={Inverse problems for fractional equations with a minimal number
				of measurements},
			date={2022},
			journal={arXiv preprint},
			note={\href{https://arxiv.org/abs/2203.03010}{\texttt{arXiv:2203.03010}}},
		}
		
		\bib{LLL21InverseProblemNonlinearWave}{article}{
			author={Lin, Y.-H.},
			author={Liu, H.},
			author={Liu, X.},
			title={Determining a nonlinear hyperbolic system with unknown sources
				and nonlinearity},
			date={2021},
			journal={arXiv preprint},
			note={\href{https://arxiv.org/abs/2107.10219}{\texttt{arXiv:2107.10219}}},
		}
		
		\bib{Mikko_nonlinear_elliptic}{article}{
			author={Lassas, M.},
			author={Liimatainen, T.},
			author={Lin, Y.},
			author={Salo, M.},
			title={Inverse problems for elliptic equations with power type
				nonlinearities},
			date={2021},
			journal={J. Math. Pures Appl. (9)},
			volume={145},
			pages={44\ndash 82},
			note={\href{https://mathscinet.ams.org/mathscinet-getitem?mr=4188325}{MR4188325},
				\href{https://doi.org/10.1016/j.matpur.2020.11.006}{doi:10.1016/j.matpur.2020.11.006},
				\href{https://arxiv.org/abs/1903.12562}{\texttt{arXiv:1903.12562}}},
		}
		
		\bib{Tony_lin_Salo_teemu}{article}{
			author={Liimatainen, T.},
			author={Lin, Y.-H.},
			author={Salo, M.},
			author={Tyni, T.},
			title={Inverse problems for elliptic equations with fractional power
				type nonlinearities},
			date={2022},
			journal={J. Differential Equations},
			volume={306},
			pages={189\ndash 219},
			note={\href{https://mathscinet.ams.org/mathscinet-getitem?mr=4332042}{MR4332042},
				\href{https://doi.org/10.1016/j.jde.2021.10.015}{doi:10.1016/j.jde.2021.10.015}},
		}
		
		\bib{LO22GlobalUniquenessSemilinear}{article}{
			author={Lai, R.-Y.},
			author={Ohm, L.},
			title={Inverse problems for the fractional laplace equation with lower
				order nonlinear perturbations},
			date={2022},
			journal={Inverse Probl. Imaging},
			volume={16},
			number={2},
			pages={305\ndash 323},
			note={\href{https://mathscinet.ams.org/mathscinet-getitem?mr=4382868}{MR4382868},
				\href{http://dx.doi.org/10.3934/ipi.2021051}{doi:10.3934/ipi.2021051}},
		}
		
		\bib{non_linear_schrodinger}{article}{
			author={Lassas, M.},
			author={Oksanen, L.},
			author={Salo, M.},
			author={Tetlow, A.},
			title={Inverse problems for non-linear {S}chr{\"{o}}dinger equations
				with time-dependent coefficients},
			date={2022},
			journal={arXiv preprint},
			note={\href{https://arxiv.org/abs/2201.03699}{\texttt{arXiv:2201.03699}}},
		}
		
		\bib{MBRS16NonlocalFractional}{book}{
			author={Molica~Bisci, G.},
			author={Radulescu, V.~D.},
			author={Servadei, R.},
			title={Variational methods for nonlocal fractional problems},
			series={Encyclopedia of Mathematics and its Applications},
			publisher={Cambridge University Press},
			address={Cambridge},
			date={2016},
			volume={162},
			note={\href{https://mathscinet.ams.org/mathscinet-getitem?mr=3445279}{MR3445279},
				\href{https://doi.org/10.1017/CBO9781316282397}{doi:10.1017/CBO9781316282397}},
		}
		
		\bib{McL00EllipticSystems}{book}{
			author={McLean, W.},
			title={Strongly elliptic systems and boundary integral equations},
			publisher={Cambridge University Press},
			date={2000},
			note={\href{https://mathscinet.ams.org/mathscinet-getitem?mr=1742312}{MR1742312}},
		}
		
		\bib{Claudio_partial}{article}{
			author={Mu{\~{n}}oz, C.},
			author={Uhlmann, G.},
			title={The {C}alder{\'{o}}n problem for quasilinear elliptic equations},
			date={2020},
			journal={Ann. Inst. H. Poincar\'{e} C Anal. Non Lin\'{e}aire},
			volume={37},
			number={5},
			pages={1143\ndash 1166},
			note={\href{https://mathscinet.ams.org/mathscinet-getitem?mr=4138229}{MR4138229},
				\href{https://doi.org/10.1016/j.anihpc.2020.03.004}{doi:10.1016/j.anihpc.2020.03.004}},
		}
		
		\bib{Pon16elliptic}{book}{
			author={Ponce, A.~C.},
			title={Elliptic {P}{D}{E}s, measures and capacities. from the poisson
				equations to nonlinear thomas-fermi problems. ems tracts in mathematics},
			publisher={European Mathematical Society (EMS)},
			address={Z{\"{u}}rich},
			date={2016},
			volume={23},
			note={\href{https://mathscinet.ams.org/mathscinet-getitem?mr=3675703}{MR3675703},
				\href{https://doi.org/10.4171/140}{doi:10.4171/140}},
		}
		
		\bib{Ros16FractionalLaplacianSurvey}{article}{
			author={Ros-Oton, X.},
			title={Nonlocal elliptic equations in bounded domains: a survey},
			date={2016},
			journal={Publ. Mat.},
			volume={60},
			number={1},
			pages={3\ndash 26},
			note={\href{https://mathscinet.ams.org/mathscinet-getitem?mr=3447732}{MR3447732},
				\href{https://doi.org/10.5565/PUBLMAT_60116_01}{doi:10.5565/PUBLMAT\_60116\_01},
				\href{https://arxiv.org/abs/1504.04099}{\texttt{arXiv:1504.04099}}},
		}
		
		\bib{Ste16singular}{book}{
			author={Stein, E.~M.},
			title={Singular integrals and differentiability properties of functions
				(pms-30)},
			publisher={Princeton university press},
			date={2016},
			volume={30},
			note={\href{https://mathscinet.ams.org/mathscinet-getitem?mr=0290095}{MR0290095},
				\href{https://doi.org/10.1515/9781400883882}{doi:10.1515/9781400883882}},
		}
		
		\bib{Tri02FunctionSpace}{article}{
			author={Triebel, H.},
			title={Function spaces in {L}ipschitz domains and on {L}ipschitz
				manifolds. characteristic functions as pointwise multipliers},
			date={2002},
			journal={Revista Matem{\'a}tica Complutense},
			volume={15},
			number={2},
			pages={475\ndash 524},
			note={\href{https://mathscinet.ams.org/mathscinet-getitem?mr=1951822}{MR1951822},
				\href{https://doi.org/10.5209/rev_REMA.2002.v15.n2.16910}{doi:10.5209/rev\_REMA.2002.v15.n2.16910}},
		}
		
		\bib{Uhl_survey}{article}{
			author={Uhlmann, G.},
			title={Electrical impedance tomography and {C}alder\'{o}n's problem},
			date={2009},
			journal={Inverse Problems},
			volume={25},
			number={12},
			note={123011, 39pp.
				\href{https://mathscinet.ams.org/mathscinet-getitem?mr=3460047}{MR3460047},
				\href{https://doi.org/10.1088/0266-5611/25/12/123011}{doi:10.1088/0266-5611/25/12/123011}},
		}
		
	\end{biblist}
\end{bibdiv}

}

\end{document}